\newtheorem*{rep@theorem}{\rep@title}
\newcommand{\newreptheorem}[2]{%
\newenvironment{rep#1}[1]{%
 \def\rep@title{#2 \ref{##1}}%
 \begin{rep@theorem}}%
 {\end{rep@theorem}}}
\newtheorem{theorem}{Theorem}
\newtheorem*{theorem*}{Theorem}
\numberwithin{theorem}{section}
\newtheorem{theorem-definition}[theorem]{Theorem-Definition}
\newtheorem{proposition}[theorem]{Proposition}
\newtheorem*{acknowledgements*}{Acknowledgements}
\newtheorem{corollary}[theorem]{Corollary}
\newtheorem*{corollary*}{Corollary}
\newtheorem{lemma}[theorem]{Lemma}
\newtheorem*{lemma*}{Lemma}
\newtheorem{conjecture}[theorem]{Conjecture}
\theoremstyle{definition}
\newtheorem{definition}[theorem]{Definition}
\newtheorem*{definition*}{Definition}
\newtheorem{claim}[theorem]{Claim}
\newtheorem{note}[theorem]{Remark}
\newtheorem*{note*}{Remark}
\theoremstyle{definition} 
\theoremstyle{remark}
\numberwithin{equation}{section}
\DeclareMathOperator{\Out}{Out}
\DeclareMathOperator{\Vol}{Vol}
\DeclareMathOperator{\Ker}{Ker}
\DeclareMathOperator{\Diff}{Diff}
\DeclareMathOperator{\PSL}{PSL}
\newcommand{\fun}[3]{#1 \colon #2 \to #3}
\newcommand{\tpitchfork}{%
  \vbox{
    \baselineskip\z@skip
    \lineskip-.52ex
    \lineskiplimit\maxdimen
    \m@th
    \ialign{##\crcr\hidewidth\smash{$-$}\hidewidth\crcr$\pitchfork$\crcr}
  }%
}
\begin{document}

\title[A lower bound for the volumes of complements of periodic geodesics]{A lower bound for the volumes of complements of periodic geodesics}
\author[J. A. Rodriguez-Migueles]{JOSE ANDRES RODRIGUEZ-MIGUELES}
\begin{abstract}
Every closed geodesic $\gamma$ on a surface has a canonically associated knot $\widehat\gamma$ in the projective unit tangent bundle. We study, for $\gamma$ filling, the volume of the associated knot complement with respect to its unique complete hyperbolic metric. We provide a lower bound for the volume relative to the number of homotopy classes of $\gamma$-arcs in each pair of pants of a pants decomposition of the surface.
\end{abstract}
\maketitle
\section{Introduction}

Let $\Sigma$ be a complete, orientable hyperbolic surface of finite area. Every closed geodesic $\gamma$ in $\Sigma$ has a canonical lift $\widehat\gamma$ in the projective unit tangent bundle  $PT^1\Sigma,$ namely the image under the map ${T^1\Sigma}\rightarrow{PT^1\Sigma}$  of the corresponding periodic orbit of the geodesic flow. Such canonical lift  $\widehat\gamma$ is an embedding of $\mathbb{S}^1$ into $PT^1\Sigma$ and can then be considered as a knot in $PT^1\Sigma.$ 
\vskip .2cm
More generally, given a closed geodesic in a hyperbolic $2$-orbifold one can consider its canonical lift as a knot in the corresponding unit tangent bundle. This is possibly particularly interesting in the case of the modular surface $\Sigma_{mod}=\mathbb{H}^2/\PSL_2(\mathbb{Z})$, because its unit tangent bundle is homeomorphic to the complement of the trefoil knot in $\mathbb{S}^3.$ Therefore, canonical lifts of closed geodesics on the modular surface can be seen as knots in $\mathbb{S}^3.$ In fact, in \cite{Ghy07} Ghys observed that the obtained knots are Lorentz knots. For facts about such knots \cite{BWS83,BK09,Deh11}.
\vskip .2cm
The aim of this paper is to study $M_{\widehat\gamma},$ the complement of a regular neighborhood of the canonical lift $\widehat\gamma$ in $PT^1\Sigma,$ for a general hyperbolic surface $\Sigma.$  Foulon and Hasselblatt \cite{FH13}  proved, with the advising of Calegari, Fenley, Otal and Walsh, that $M_{\widehat\gamma}$ admits a complete hyperbolic metric of finite volume as soon as $\gamma$ fills the surface. Such metric is unique by the Mostow's Rigidity Theorem, meaning that any geometric invariant is a topological invariant.  We will be interested in estimating the volume of $M_{\widehat\gamma},$ when $\gamma$ is a filling closed geodesic.
\vskip .2cm
Our main result is to give a lower bound on the volume of the complement $M_{\widehat\gamma}$ (Theorem \ref{1}). We start however by commenting on upper bounds for the volume of $M_{\widehat\gamma}.$ Bergeron, Pinsky and Silberman have already studied in \cite{BPS16}  the problem of finding an upper bound, by giving one which is linear in the length of the geodesic. Nevertheless, it is easy to construct sequences of closed geodesics with length approaching to infinity but whose associated canonical lift complements have uniformly bounded volume. For example, suppose that $\gamma$ is filling and $\phi$ is a diffeomorphism of the surface representing an infinite order element in the mapping class group: the length of the geodesics in the sequence $\{\phi^n(\gamma)\}_{n\in\mathbb{N}}$ tend to infinity as $n$ grows, but every $M_{\widehat{\phi^n(\gamma)} }$ is homeomorphic to $M_{\widehat\gamma}.$ There are however more interesting examples:

\begin{theorem}\label{2}
Given a hyperbolic surface $\Sigma,$ there exist a constant $V_0>0$ and a sequence $\{\gamma_n\}$ of filling closed geodesic on $\Sigma,$ with $M_{\widehat\gamma_n}\not\cong M_{\widehat\gamma_k}$ for every $k\not= n,$ such that $\Vol(M_{\widehat\gamma_n})< V_0$ for every $n\in\mathbb{N}.$  Moreover, for any sequence $\{X_n\}$ of hyperbolic metrics on $\Sigma,$  we have that $\ell_{X_n}(\gamma_n) \nearrow \infty.$
 \end{theorem}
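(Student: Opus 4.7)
My plan is to fix a filling closed geodesic $\gamma_0$ on $\Sigma$ together with a simple closed geodesic $\alpha$ transverse to $\gamma_0$, and to take $\gamma_n$ to be the closed geodesic representative of the free homotopy class $[\gamma_0\alpha^n]$, the concatenation being performed at an intersection point of $\gamma_0$ and $\alpha$. Since $\gamma_0$ fills $\Sigma$, so does $\gamma_n$ for every $n$, and I expect this family to satisfy the three required properties.

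The geometric heart of the construction is to realize the $M_{\widehat{\gamma_n}}$ as a family of Dehn fillings on a single cusped hyperbolic manifold. Consider the two-component link $L := \hat\gamma_0 \cup \hat\alpha$ in $PT^1\Sigma$ and let $N := PT^1\Sigma \setminus \nu(L)$. Since $\gamma_0 \cup \alpha$ is a filling multi-curve on $\Sigma$, the theorem of \cite{FH13} implies that $N$ is hyperbolic of finite volume. The canonical lift $\widehat{\gamma_n}$ can be isotoped, inside a tubular neighborhood of $L$, to a curve that follows $\hat\gamma_0$ once and wraps $n$ times around a meridian of $\partial\nu(\hat\alpha)$; consequently $M_{\widehat{\gamma_n}}$ is obtained from $N$ by Dehn filling the $\hat\alpha$-cusp with a slope that diverges linearly in $n$. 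Thurston's hyperbolic Dehn surgery theorem then yields, for all $n$ sufficiently large, that $M_{\widehat{\gamma_n}}$ is hyperbolic with $Vol(M_{\widehat{\gamma_n}}) \nearrow Vol(N) =: V_0$; passing to a subsequence on which the volumes are pairwise distinct ensures the complements are pairwise non-homeomorphic.

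For the length statement I would argue by contradiction: assume some subsequence of $\ell_{X_n}(\gamma_n)$ remains uniformly bounded. If the projections $[X_n]$ to moduli space stay in a compact set, then after pulling back by suitable mapping classes $\phi_n$ one may assume $X_n \to X_\infty$; the bounded-length assumption forces the $\phi_n(\gamma_n)$ to lie in finitely many free homotopy classes, and since mapping classes lift to homeomorphisms of $PT^1\Sigma$ carrying canonical lifts to canonical lifts, the $M_{\widehat{\gamma_n}}$ would be pairwise homeomorphic on a subsequence --- contradicting the Dehn-surgery step. Otherwise some sequence of simple closed geodesics $\beta_n$ is pinched; the collar lemma gives $\ell_{X_n}(\gamma_n) \geq 2\, i(\gamma_n,\beta_n)\cdot w(\ell_{X_n}(\beta_n))$ with $w(\ell) \to \infty$ as $\ell \to 0$, and the combinatorial description of $\gamma_n$ as $\gamma_0\alpha^n$ lets one bound $i(\gamma_n,\beta_n)$ from below to obtain a contradiction.

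The step I expect to be most delicate is the degenerating case of the length argument, specifically when a pinched curve $\beta_n$ is disjoint from $\alpha$, so that intersection with $\gamma_n$ does not grow with $n$ via the $\alpha$-wrapping. Overcoming this will likely require either augmenting the initial data (for instance taking $\alpha$ to be part of a collection of simple closed curves witnessing every possible topological pinching type, so no $\beta_n$ can evade all of them) or a sharper collar-type inequality that exploits the fact that $\gamma_n$ fills $\Sigma$.
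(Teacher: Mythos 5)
The family $\gamma_n=[\gamma_0\alpha^n]$ is essentially the one used in the paper, but the central step of your argument --- realizing every $M_{\widehat{\gamma_n}}$ as a Dehn filling of the single cusped manifold $N=PT^1\Sigma\setminus\nu(\hat\gamma_0\cup\hat\alpha)$ along the $\hat\alpha$-cusp --- does not work. Since $\hat\alpha$ bounds no disk in $PT^1\Sigma$, a non-meridional filling of that cusp does not return $PT^1\Sigma$, so there is no reason for the filled manifold to be the complement of any knot in $PT^1\Sigma$, let alone of $\widehat{\gamma_n}$. The nearest correct operation, a longitudinal twist supported in a collar of $\partial\nu(\hat\alpha)$, inserts a copy of $\alpha$ into \emph{every} strand of $\gamma_0$ crossing $\alpha$, producing $T_\alpha^n(\gamma_0)$ rather than $\gamma_0\alpha^n$; and your description of $\widehat{\gamma_n}$ as wrapping $n$ times around a \emph{meridian} of $\nu(\hat\alpha)$ is also off --- the inserted arc runs $n$ times in the longitude direction. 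The mechanism the paper uses, and which you need, is an annulus twist on a two-component link: take the vertical annulus $A=\{(\alpha(t),\dot\alpha(t)e^{2i\pi s}):s\in[0,\theta+\varepsilon]\}$ over $\alpha$, with boundary $L_0=\hat\alpha$ and a fiberwise push-off $L_1$, chosen (via the minimal intersection angle $\theta$ at $p$) so that $\hat\gamma_0$ meets $A$ exactly once. Then $(1/n,-1/n)$ surgery on $(L_0,L_1)$ gives back $PT^1\Sigma$ and carries $\hat\gamma_0$ to $\widehat{\gamma_n}$, whence $Vol(M_{\widehat{\gamma_n}})<Vol(M_{\hat\gamma_0}\setminus(L_0\cup L_1))=:V_0$ by volume decrease under Dehn filling. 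Drilling two parallel copies is what localizes the insertion of $\alpha^n$ to the single crossing $p$; one cusp cannot do this. You also need $i(\gamma_0,\alpha)\ge 2$: if $i(\gamma_0,\alpha)=1$ then $\gamma_0\alpha^n=T_\alpha^n(\gamma_0)$, all the complements are homeomorphic, and the length claim fails for the metrics $X_n=(T_\alpha^n)_*X$.

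For the remaining two claims your routes are either harder than necessary or incomplete. Non-homeomorphy via strict volume monotonicity under Dehn filling would be fine once the surgery description is repaired (and only for large $n$ after passing to a subsequence); the paper instead checks, via an explicit piecewise-geodesic representative and the Hass--Scott no-bigon criterion, that the minimal self-intersection number of $\gamma_n$ equals $n(i(\gamma_0,\alpha)-1)$ plus lower-order terms, and then invokes Theorem \ref{charac}: distinct self-intersection numbers give distinct mapping class group orbits, hence non-homeomorphic complements for \emph{all} $n\ne k$. The same self-intersection count settles the length statement immediately: on any complete hyperbolic surface the length of a closed geodesic is bounded below by a universal function of its self-intersection number tending to infinity, so $\ell_{X_n}(\gamma_n)\nearrow\infty$ for every sequence of metrics. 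This bypasses your moduli-space compactness argument entirely, and in particular the degenerating case with a pinching curve disjoint from $\alpha$, which you correctly identify as the weak point of your approach and do not resolve.
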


Bergeron, Pinsky and Silberman gave already in (\cite{BPS16}, Example 3.1) exam\-ples satisfying Theorem \ref{2} in the case of the modular surface. Their examples and the ones in the proof of Theorem \ref{2} are different. To explain why this is the case we recall that in the modular surface (\cite{BPS16}, Section 3) they found an upper bound which is proportional to the period plus the sum of the logarithms of the coefficients corresponding to the geodesic's continued fraction expansion. The examples provided in  (\cite{BPS16}, Example 3.1) have unbounded length but bounded period. On the other hand, the examples used to prove Theorem \ref{2} have unbounded period. This yields:

\begin{corollary}\label{mod1}
For the modular surface $\Sigma_{mod},$ there exist a constant $V_0>0$ and a sequence $\{\gamma_k\}$ of filling closed geodesics on $\Sigma_{mod},$  with $M_{\widehat\gamma_n}\not\cong M_{\widehat\gamma_k}$ for every $k\not= n,$ such that $\Vol(M_{\widehat\gamma_k})< V_0$ for every $k\in\mathbb{N}$ and the period of the continued fraction expansion of $\gamma_k$ tends to infinity.
\end{corollary}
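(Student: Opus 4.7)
The plan is to derive Corollary \ref{mod1} by combining Theorem \ref{2} with the upper bound of Bergeron, Pinsky and Silberman recalled in the paragraph preceding the statement. First I would apply Theorem \ref{2} in the case $\Sigma = \Sigma_{mod}$ to obtain a sequence $\{\gamma_k\}$ of filling closed geodesics with pairwise non-homeomorphic complements $M_{\hat\gamma_k}$ and uniformly bounded volume $Vol(M_{\hat\gamma_k}) < V_0$. All that remains is to show that, along this sequence (or a subsequence), the period of the continued fraction expansion of $\gamma_k$ tends to infinity.

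I would establish this by contradiction. Assume that after passing to a subsequence the period $P_k$ is bounded by some $P_0$. The BPS estimate cited in the text gives a constant $C > 0$ with
\[ Vol(M_{\hat\gamma_k}) \le C \Bigl( P_k + \sum_{i=1}^{P_k} \log a_i^{(k)} \Bigr), \]
where $[a_1^{(k)}, \ldots, a_{P_k}^{(k)}]$ denotes the repeating block of the continued fraction expansion of $\gamma_k$. Combining $Vol(M_{\hat\gamma_k}) < V_0$ with $P_k \le P_0$, the sum $\sum_{i=1}^{P_k} \log a_i^{(k)}$ stays bounded uniformly in $k$; in particular each coefficient $a_i^{(k)}$ is bounded above by some $A_0$ independent of $k$.

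The concluding step is a pigeonhole count. The set of purely periodic continued fraction expansions with period at most $P_0$ and entries at most $A_0$ is finite. Since on the modular surface a closed geodesic is determined by its periodic continued fraction expansion up to cyclic permutation of the periodic block, only finitely many distinct geodesics $\gamma_k$ can occur in the sequence, and hence only finitely many distinct complements $M_{\hat\gamma_k}$ up to homeomorphism. This contradicts the pairwise non-homeomorphism guaranteed by Theorem \ref{2}, so $P_k \to \infty$ along the sequence, as required.

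The substantive content of the argument is carried by Theorem \ref{2} and the BPS estimate, so I expect the corollary itself to be a short deduction. The main point requiring care is that the constant $C$ in the BPS bound is universal, so that it does not depend on the geodesic $\gamma_k$; provided this is how the estimate is stated in \cite{BPS16}, the rest of the proof is essentially a counting argument.
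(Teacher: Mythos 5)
Your argument has a fatal logical error at its core: the Bergeron--Pinsky--Silberman estimate is an \emph{upper} bound, $Vol(M_{\hat\gamma_k}) \le C\bigl(P_k + \sum_i \log a_i^{(k)}\bigr)$, so from $Vol(M_{\hat\gamma_k}) < V_0$ and $P_k \le P_0$ you cannot conclude that $\sum_i \log a_i^{(k)}$ is bounded --- the inequality runs in the wrong direction. A small left-hand side is perfectly consistent with an arbitrarily large right-hand side. To make your pigeonhole argument work you would need a \emph{lower} bound on the volume in terms of the coefficients, and no such bound can hold: Example 3.1 of \cite{BPS16} exhibits exactly a sequence of filling geodesics on $\Sigma_{mod}$ with bounded period, unbounded length (hence unbounded continued-fraction coefficients), pairwise non-homeomorphic complements, and uniformly bounded volume. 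That same example shows that the statement you are trying to prove --- that \emph{any} sequence satisfying the conclusions of Theorem \ref{2} on the modular surface must have period tending to infinity (along a subsequence) --- is simply false. The corollary cannot be deduced from the statement of Theorem \ref{2} alone.

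What is actually needed, and what the paper does, is an explicit construction: one takes $\gamma_k$ to be the geodesic coded by the positive word $x^2y\alpha^k$, where $\alpha$ is a positive word containing both $x$ and $y$ and ending in $y$, chosen so that its geodesic intersects the geodesic of $x^2y$ more than once. Since each copy of $\alpha$ contributes at least one cyclic subword $xy$, the period $n_{\gamma_k}$ grows at least linearly in $k$; the uniform volume bound comes from running the Dehn-surgery argument of Theorem \ref{2} (here $\gamma_k$ is, up to homotopy, $\eta^k \ast \gamma_0$ with $\eta = \alpha$ and $\gamma_0 = x^2y\alpha$, roughly speaking), which exhibits every $M_{\hat\gamma_k}$ as a Dehn filling of the fixed link complement $PT^1\Sigma_{mod}\setminus(\widehat{x^2y}\cup\hat\alpha\cup\widehat{\alpha_\theta})$, and pairwise non-homeomorphism follows from the growth of the self-intersection number as in Theorem \ref{2}. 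So the corollary is a byproduct of the \emph{proof} of Theorem \ref{2} applied to a carefully chosen pair $(\gamma_0,\eta)$, not of its statement.
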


It is perhaps worth mentioning that there is a numerical example for Corollary \ref{mod1} in (\cite{BPS17}, Example 5.2). Even more interesting is that in their paper the authors presented numerical evidence of geodesics on the modular surface for which the volume's growth is linear in the geometric length of the geodesics. We show that, up to a logarithmic factor, such geodesics do actually exist in any hyperbolic surface: 

\begin{theorem}\label{pi-b-2}
Given a hyperbolic metric $X$ on a surface $\Sigma,$ there exist a sequence $\{\gamma_n\}$ of filling closed geodesics on $\Sigma$ with $\ell_X(\gamma_n)\nearrow \infty$ such that,
$$ \Vol(M_{\widehat{\gamma_n}})\geq c_X\frac{ \ell_X(\gamma_n)}{\ln(\ell_X(\gamma_n))},$$
where $c_X$ depends on the hyperbolic metric $X.$
 \end{theorem}
 
Returning to the case of the modular surface, we can reformulate Theorem \ref{pi-b-2} in terms of the period of the geodesic's continued fraction expansion, as follows:
\begin{theorem}\label{mod}
For the modular surface $\Sigma_{mod},$ there exist a sequence $\{\gamma_k\}$ of filling closed geodesics on $\Sigma_{mod}$ with $n_{\gamma_k}\nearrow \infty$ such that,
$$ \Vol(M_{\widehat{\gamma_k}})\geq v_3 \frac{n_{\gamma_k}}{12},$$
where $n_{\gamma_k}$ is half the period of the continued fraction expansion of $\gamma_k$ and $v_3$ is the volume of a regular ideal tetrahedra. 
\end{theorem}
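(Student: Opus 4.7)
The plan is to derive Theorem \ref{mod} from Theorem \ref{pi-b-2} specialized to the modular surface, using the classical bijection between closed geodesics on $\Sigma_{mod}$ and periodic continued fractions. First, I would set up the dictionary: for $\Sigma_{mod}$ with its standard hyperbolic structure $X_{mod}$, the Artin correspondence provides a bijection between primitive oriented closed geodesics and reduced primitive purely periodic continued fraction expansions $[\overline{a_1, a_2, \ldots, a_{2n_\gamma}}]$ of even period. The hyperbolic length satisfies $\ell_{X_{mod}}(\gamma) = 2\log\lambda_\gamma$, where $\lambda_\gamma$ is the dominant eigenvalue of the associated word $\prod_{i=1}^{2n_\gamma} \bigl(\begin{smallmatrix} a_i & 1 \\ 1 & 0 \end{smallmatrix}\bigr)$ in $\PSL_2(\mathbb{Z})$.

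Second, I would apply Theorem \ref{pi-b-2} with $X = X_{mod}$ to obtain a sequence of filling closed geodesics $\{\gamma_k\}$ on $\Sigma_{mod}$ with $\ell_{X_{mod}}(\gamma_k) \nearrow \infty$ and
\[
Vol(M_{\hat\gamma_k}) \geq c_{X_{mod}} \frac{\ell_{X_{mod}}(\gamma_k)}{\log \ell_{X_{mod}}(\gamma_k)}.
\]
The construction in Theorem \ref{pi-b-2} can be arranged, by a careful choice of continued fraction coefficients, to ensure $n_{\gamma_k} \nearrow \infty$. Using $\ell_{X_{mod}}(\gamma_k) \leq 2 \sum_i \log(a_i+2)$ together with coefficients growing at a moderate rate with $n_{\gamma_k}$, one obtains $\ell_{X_{mod}}(\gamma_k) \asymp n_{\gamma_k}\log n_{\gamma_k}$, which exactly absorbs the logarithmic loss in Theorem \ref{pi-b-2} and yields a linear-in-$n_{\gamma_k}$ lower bound. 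A key input is a combinatorial estimate showing that, for the constructed sequence, the number of blocks in the continued fraction expansion controls the length up to this logarithmic factor.

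\textbf{The main obstacle} is to match the specific constant $v_3/12$ on the right-hand side. This would require either (i) tracing the constant $c_{X_{mod}}$ through the proof of Theorem \ref{pi-b-2} and verifying that it specializes to $v_3/12$ after the length-to-period conversion, or (ii) passing to the $6$-fold congruence cover $\Sigma(2) = \mathbb{H}^2/\Gamma(2)$ (the thrice-punctured sphere) and applying a volume-counting argument to the lifted canonical link that exhibits at least $n_{\gamma_k}$ disjoint geometric building blocks each contributing at least $v_3$ to the hyperbolic volume. The factor $1/6$ would then arise from dividing by the covering degree, and the remaining factor of $1/2$ from the convention that $n_{\gamma_k}$ is half (rather than the full) period of the continued fraction expansion. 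I expect path (ii) to be cleaner, since on $\Sigma(2)$ one can exploit the explicit ideal polyhedral decomposition of $PT^1 \Sigma(2)$ into regular ideal tetrahedra, and read off the building blocks directly from the Markov-like symbolic coding provided by the continued fraction.
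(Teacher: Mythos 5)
Your proposal does not follow the paper's route, and both of your suggested paths have genuine gaps. The paper does not deduce Theorem \ref{mod} from Theorem \ref{pi-b-2}; the two results are independent applications of Theorem \ref{1}. The actual proof writes down an explicit family $\omega(\gamma_k)=x^6yx^{13}yx^{19}y\cdots x^{6k+1}(xy^2x^2y)$, lifts it to the once-punctured modular torus (an index-$6$ cover of $\Sigma_{mod}$, not $\Sigma(2)$, though that choice is inessential), rewrites the code as an $a$-cyclically reduced word in the HNN splitting $\langle a,b,t\mid tat^{-1}=b\rangle$, and uses Lemma \ref{fitor} and Corollary \ref{arcs} to see that the lifted geodesic is filling and that splitting along the simple closed geodesic $a$ produces $k+1$ pairwise non-homotopic arcs. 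Theorem \ref{1} then gives the lower bound $\frac{v_3}{2}(k+1)$ upstairs, and dividing by the covering degree $6$ yields $v_3\frac{k+1}{12}\geq v_3\frac{n_{\gamma_k}}{12}$. So the factor $\frac{1}{2}$ comes from the Agol--Storm--Thurston doubling bound inside Theorem \ref{1}, not from the half-period convention (that convention is already absorbed into the definition of $n_{\gamma}$ as the number of $xy$-blocks), and the $\frac{1}{6}$ comes from the cover. Your path (ii) has the right shape but the wrong accounting, and its ``disjoint building blocks each contributing $v_3$'' is exactly the content of Theorem \ref{1} together with Lemma \ref{ch}, which you would still have to supply.

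The more serious gap is in path (i). Theorem \ref{pi-b-2} is an existence statement about one specific sequence (built from all reduced words of bounded length in $\langle a,b\rangle$); for that sequence the geometric length is comparable to the word length and hence to $n_{\gamma}$, so the bound $c\,\ell/\ln\ell$ translates only into $c\,n_{\gamma}/\ln n_{\gamma}$, strictly weaker than the linear bound of Theorem \ref{mod}. To absorb the logarithm you propose taking continued-fraction coefficients that grow so that $\ell\asymp n_{\gamma}\ln n_{\gamma}$, but then you are no longer using the sequence that Theorem \ref{pi-b-2} actually provides, and you must re-establish the volume lower bound for your new sequence from scratch, i.e.\ count homotopy classes of arcs via Theorem \ref{1} --- which is the entire content of the paper's proof. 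Some explicit construction of this kind is unavoidable: Corollary \ref{mod1} exhibits sequences with $n_{\gamma_k}\to\infty$ and uniformly bounded volume, so no argument using only $n_{\gamma_k}\to\infty$ (or only $\ell\to\infty$) can succeed; one must produce a family whose arcs in a fixed pair of pants of the cover fall into linearly many distinct homotopy classes.
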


The reader might wonder why we put great emphasis on the period of the geodesics on the modular surface. The reason for this is that in a further research we intend to pursue an upper bound for the volume of the canonical lift complement which is linear in the period of the geodesic's continued fraction expansion.
\vskip .2cm
We point out that the main ingredient to achieve the proofs of Theorem \ref{pi-b-2} and Theorem \ref{mod}, is our next result which estimates a lower bound for the volume of the canonical lift complement in terms of combinatorial data between the geodesic and a given pants decomposition on $\Sigma.$ 

\begin{theorem} \label{1}
Given a pants decomposition $\Pi$ on a hyperbolic surface $\Sigma,$  and $\gamma$ a filling closed geodesic, we have that:
 $$\Vol(M_{\widehat\gamma})\geq \frac{v_3}{2}\sum_{P \in \Pi}(\sharp\{\mbox{homotopy classes of} \hspace{.2cm}  \gamma\mbox{-arcs in} \hspace{.2cm} P\}-3),$$ 
  where $v_3$ is the volume of a regular ideal tetrahedra.
  \end{theorem}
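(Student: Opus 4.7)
The plan is to decompose $M_{\hat\gamma}$ along essential surfaces arising from the pants decomposition $\Pi$, apply a doubling-and-cutting volume inequality of Agol--Storm--Thurston type, and then lower bound the volume of each resulting piece by the corresponding arc count.

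First, for each pants curve $\alpha_j\in\Pi$ the circle-bundle restriction $T_j := PT^1\Sigma|_{\alpha_j}$ is an embedded torus in $PT^1\Sigma$. Since $\gamma$ is a geodesic distinct from $\alpha_j$, their tangent directions disagree at every intersection in $\Sigma$, so $\hat\gamma\cap\hat\alpha_j=\emptyset$ and $\hat\gamma$ meets $T_j$ transversely in $|\gamma\cap\alpha_j|$ points. Thus $S_j := T_j\cap M_{\hat\gamma}$ is a properly embedded multiply-punctured torus. I would verify that each $S_j$ is essential (incompressible and $\partial$-incompressible) in $M_{\hat\gamma}$: any compressing disk would project to an essential disk in $\Sigma\setminus\gamma$, contradicting the filling hypothesis on $\gamma$. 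Cutting $M_{\hat\gamma}$ along $\bigsqcup_j S_j$ then yields pieces $N_i := (PT^1 P_i)\setminus \hat\gamma|_{P_i}$, one for each pair of pants $P_i$.

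Second, let $DN_i$ denote the double of $N_i$ along its $S_j$-boundary, made hyperbolic via Thurston's hyperbolization theorem (the filling hypothesis ensures atoroidality after doubling). The Agol--Storm--Thurston cutting-and-doubling inequality then gives
\[
Vol(M_{\hat\gamma}) \;\ge\; \tfrac{1}{2}\sum_{i=1}^{-\chi(\Sigma)} Vol(DN_i),
\]
where the factor $1/2$ records the passage to the double.

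Third, I would show $Vol(DN_i)\ge v_3\, n_i$, where $n_i$ is the number of homotopy classes of $\gamma$-arcs in $P_i$. Topologically, $PT^1 P_i\cong P_i\times S^1$ and $\hat\gamma|_{P_i}$ consists of canonical lifts of one representative arc per class. The idea is to show combinatorially that each homotopy class contributes at least one regular ideal tetrahedron's worth of simplicial volume to $DN_i$, either by exhibiting an ideal triangulation of $DN_i$ with at least $n_i$ tetrahedra or, more robustly, by constructing a degree-one map from $DN_i$ onto a model manifold of simplicial volume $\ge n_i$. Either route yields $\|DN_i\|\ge n_i$, hence $Vol(DN_i)=v_3\|DN_i\|\ge v_3\, n_i$, which combined with the cutting inequality gives the theorem. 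This last step is expected to be the main obstacle: a priori, distinct homotopy classes of $\gamma$-arcs in $P_i$ could become isotopic after lifting to the circle bundle, so one must verify that canonical lifts of inequivalent arcs remain distinct in $PT^1 P_i$ and that each class contributes an independent generator to the simplicial volume of the doubled piece.
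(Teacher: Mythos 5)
Your decomposition is the same as the paper's: cut $M_{\hat\gamma}$ along the punctured tori lying over the pants curves, double the pieces, and invoke Agol--Storm--Thurston. But there are two genuine gaps in how you finish. First, you assert that each double $DN_i = D\bigl((P_i)_{\hat\gamma}\bigr)$ is "made hyperbolic via Thurston's hyperbolization theorem (the filling hypothesis ensures atoroidality after doubling)." This is unjustified and in general false: the filling hypothesis concerns $\gamma$ in all of $\Sigma$, not the arcs it leaves in a single pair of pants, and the double of $(P_i)_{\hat\gamma}$ can contain essential tori (a nontrivial characteristic submanifold). The Agol--Storm--Thurston inequality is stated in terms of the \emph{simplicial volume} $\|D(N\setminus\setminus S)\|$ precisely because the double need not be hyperbolic; the paper keeps the Gromov norm throughout, passes to the JSJ decomposition of each $D((P_i)_{\hat\gamma})$, and uses that the Seifert-fibered pieces contribute zero, so that $\|D((P_i)_{\hat\gamma})\| = \|D((P_i)_{\hat\gamma})^{hyp}\|$. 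Your inequality $Vol(M_{\hat\gamma})\ge\frac12\sum Vol(DN_i)$, with $Vol$ meaning hyperbolic volume of the double, does not make sense as written.

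Second, your step three --- showing $\|DN_i\|\ge n_i$ --- is a placeholder rather than an argument, and the routes you sketch do not work as stated: exhibiting an ideal triangulation of $DN_i$ with at least $n_i$ tetrahedra gives an \emph{upper} bound $\|DN_i\|\le n_i$, not a lower bound, and no model manifold for a degree-one map is produced. The paper's actual mechanism is Adams' theorem that an $N$-cusped hyperbolic $3$-manifold has volume at least $Nv_3$, applied to the atoroidal JSJ piece $D((P_i)_{\hat\gamma})^{hyp}$, combined with a counting lemma: homotopy classes of $\gamma$-arcs in $P_i$ inject into isotopy classes of $\hat\gamma$-arcs in $T^1P_i$ (easy, since an isotopy of lifts projects to a homotopy of arcs), and these in turn inject into the cusps of $D((P_i)_{\hat\gamma})^{hyp}$, because two arc-boundary tori absorbed into the same Seifert component of the characteristic submanifold would force the corresponding arcs to be isotopic. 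You correctly flagged the arc-counting step as the main obstacle, but the tool that closes it (Adams' cusp-counting volume bound plus the JSJ bookkeeping) is missing from your proposal.
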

  
It is well known in $3$-dimensional topology that giving a lower bound for the volume of a complete hyperbolic $3$-manifolds is a more difficult problem than the upper bound, so it will be not surprising that Theorem \ref{1} uses a more profound result (\cite{AST07}, Theorem 9.1) due to Agol, Storm and Thurston. This result allows us to give a lower bound in terms of the simplicial volume of the manifold constructed by doubling the pieces of $M_{\widehat\gamma}$ that result from cutting it along the pre-image under the map $PT^1\Sigma\rightarrow\Sigma$ of the pants curves.
\vskip .2cm
Before presenting the structure of this paper, let us go back to the case of the modular surface. It will be interesting to calculate, by using Theorem \ref{1}, a lower bound for the geodesics in \cite{BPS17}, which come from the ideal class group of the fields $\mathbb{Q}(\sqrt{d})$ with $d$ a square-free positive integer bigger than $1.$ The interest of these closed geodesics is that they are uniformly distributed on $PT^1\Sigma_{mod}$ \cite{Duk88}.
\vskip .2cm
This paper is organized as follows. After the introduction, we begin section \ref{s1} by presenting some well known facts on transversal homotopies, one of the main technical tools that allows us to deform the geodesics without changing the topological type of their canonical lift complement.  We also recall some $3$-manifold properties of $M_{\widehat\gamma},$ such as its JSJ-decomposition, which rely on the result of the hyperbolicity of $M_{\widehat\gamma}$ (\cite{FH13}, Theorem 1.12). The only new result found in section \ref{s1} is the following on the topological type of $M_{\widehat\gamma}$, which maybe is interesting in its own right:

 \begin{theorem}\label{charac}
Let $\gamma$ and $\eta$ be two closed geodesics, where the class of $\gamma$ in $H_1(\Sigma; \mathbb{R})$ is not trivial. Then $M_{\widehat\gamma}$  is homeomorphic to $M_{\widehat\eta}$ if and only if there is a diffeomorphism $\phi$ of $\Sigma$ such that $\phi(\gamma)$ is homotopic to $\eta$. \end{theorem}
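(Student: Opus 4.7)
For the $(\Leftarrow)$ direction my argument would be direct: a diffeomorphism $\phi$ of $\Sigma$ lifts to a bundle diffeomorphism $d\phi$ of $PT^1\Sigma$ that sends $\hat\gamma$ to the tangent lift of $\phi(\gamma)$. Because $\phi(\gamma)$ is freely homotopic to the geodesic $\eta$, the transversal-homotopy tools recalled in Section~\ref{s1} supply an ambient isotopy from this tangent lift to $\hat\eta$. Composing yields a self-homeomorphism of $PT^1\Sigma$ carrying $\hat\gamma$ to $\hat\eta$, so $M_{\hat\gamma}\cong M_{\hat\eta}$.

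For $(\Rightarrow)$, suppose $f\colon M_{\hat\gamma}\to M_{\hat\eta}$ is a homeomorphism. My plan is to promote $f$ to a homeomorphism $\bar f\colon PT^1\Sigma\to PT^1\Sigma$ and then descend to $\Sigma$. Using the hyperbolicity from \cite{FH13} and Mostow rigidity, I may assume $f$ is an isometry. The first step is to identify, among the toroidal ends of $M_{\hat\gamma}$, the one $T_\gamma$ coming from $\hat\gamma$. When $\Sigma$ is closed this is automatic. In the cusped case, since a closed geodesic stays in a compact subset of $\Sigma$, the cusp ends of $\Sigma$ lift to ends of $M_{\hat\gamma}$ admitting collars that inherit an honest Seifert fibration from $PT^1\Sigma\to\Sigma$, while no such Seifert collar exists at $T_\gamma$ because $\hat\gamma$ is transverse to the bundle fibers. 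This distinguishes $T_\gamma$ and forces $f(T_\gamma)=T_\eta$.

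The technical heart of the argument is the next step: showing that $f|_{T_\gamma}$ sends the meridian $\mu_\gamma$ of $\hat\gamma$ to the meridian $\mu_\eta$ of $\hat\eta$. I would characterize $\mu_\gamma$ as the unique slope on $T_\gamma$ whose Dehn filling recovers the Seifert fibered manifold $PT^1\Sigma$ together with a core knot of the prescribed non-trivial class in $H_1(PT^1\Sigma;\mathbb{R})$. The hypothesis $[\gamma]\neq 0 \in H_1(\Sigma;\mathbb{R})$ enters precisely here, since it forces $[\hat\gamma]\neq 0 \in H_1(PT^1\Sigma;\mathbb{R})$, which obstructs any competing exceptional slope from realising the same Seifert filling with the same homological core. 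I expect this meridian-matching to be the principal obstacle.

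Once meridians match, $f$ extends across the excised tubes to a homeomorphism $\bar f\colon PT^1\Sigma\to PT^1\Sigma$ with $\bar f(\hat\gamma)$ isotopic to $\hat\eta$. Because $\Sigma$ is a hyperbolic $2$-orbifold, $PT^1\Sigma$ carries a unique Seifert fibration up to isotopy, so $\bar f$ is isotopic to a fiber-preserving homeomorphism and descends to a diffeomorphism $\phi$ of $\Sigma$. Projecting $\bar f(\hat\gamma)\simeq\hat\eta$ to $\Sigma$ and invoking transversal homotopy one more time would then exhibit $\phi(\gamma)$ freely homotopic to $\eta$, completing the proof.
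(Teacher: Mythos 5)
Your overall architecture matches the paper's: the $(\Leftarrow)$ direction via transversal homotopies is the same, and for $(\Rightarrow)$ both you and the paper reduce everything to showing that $f$ carries the meridian of $\hat\gamma$ to the meridian of $\hat\eta$, after which one extends $f$ over the filling tori and descends to $\Sigma$ (the paper descends algebraically, via the characteristic kernel of $\pi_1(PT^1\Sigma)\to\pi_1(\Sigma)$ and Baer--Dehn--Nielsen, rather than via uniqueness of the Seifert fibration, but either route is fine). Your observation that one must first match the $\hat\gamma$-boundary torus with the $\hat\eta$-boundary torus, rather than a cusp end, is a point the paper glosses over. Note, however, that the theorem does not assume $\gamma$ or $\eta$ is filling, so you cannot invoke hyperbolicity of $M_{\hat\gamma}$ or Mostow rigidity here; fortunately you never actually use the isometry assumption.

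The genuine gap is in the step you yourself identify as the technical heart. You characterize $m_{\hat\gamma}$ as ``the unique slope whose Dehn filling recovers $PT^1\Sigma$ with a homologically non-trivial core.'' Uniqueness of such a slope is a cosmetic-surgery-type assertion: a priori some non-meridional filling of $M_{\hat\gamma}$ could also yield a manifold homeomorphic to $PT^1\Sigma$, and comparing ``the homological class of the core'' across different fillings requires an identification of the filled manifolds that is exactly what is in question. Nothing you write derives this uniqueness from $[\hat\gamma]\neq 0$. The paper's argument avoids Dehn fillings entirely: it proves (Claim 2.2, via Mayer--Vietoris and the torus $T_\beta$ over a curve $\beta$ with $\langle[\beta],[\gamma]\rangle\neq 0$) that
$$\Ker\bigl(H_1(\partial\mathcal N_{\hat\gamma};\mathbb{R})\to H_1(M_{\hat\gamma};\mathbb{R})\bigr)=\spn\{[m_{\hat\gamma}]\},$$
the longitude surviving precisely because $[\hat\gamma]\neq 0$ in $H_1(PT^1\Sigma;\mathbb{R})$. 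Since this kernel is manifestly preserved by any homeomorphism of pairs of boundary tori, meridian goes to meridian with no surgery theory needed. Your instinct that $[\hat\gamma]\neq 0$ is what rigidifies the slope is correct, but you must replace the filling-uniqueness claim by this homological computation for the argument to close.
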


Section \ref{s2} is devoted to construct sequences of geodesics whose canonical lift complements are not homeomorphic and their volumes are universally bounded (Theorem \ref{2}). This sequence of canonical lifts is obtained by performing annular Dehn fillings on a link in $PT^1\Sigma.$ It follows by using transveral homotopies that the length of the geodesic in the sequence is unbounded for any sequence of hyperbolic metrics. Specifically, one shows that the self-intersection number of the sequence is unbounded.
\vskip .2cm
Later, in section \ref{s3} we prove our main result (Theorem \ref{1}) that gives a combinatorial lower bound on the volume of $M_{\widehat\gamma}.$ This allows to construct sequences of geodesics whose volume grows as the geodesic complexity increases, in terms of the length (Theorem \ref{pi-b-2}) or in terms of the period (Theorem \ref{mod}). 
\vskip .2cm
We conclude in section \ref{s4}, by discussing what happens to the volume's lower bound on the lift complement if one changes the canonical lift to a non-canonical lift in $PT^1\Sigma$ over the same filling closed geodesic in the surface.

\begin{acknowledgements*}
\textnormal{ I gratefully thank Juan Souto for suggesting to study this invariant, his useful advice and insight. I was also greatly benefited with conversations from many people during the course of this work, particularly Tommaso Cremaschi, Pierre Dehornoy,  Mario Eudave Mu\~noz, Anna Lenzhen, Max Neumann Coto and Joan Porti. The author thanks the Centre Henri Lebesgue ANR-11-LABX-0020-01 for creating an attractive mathematical environment. Moreover, I would like to thank the anonymous referee for many helpful comments and suggestions.}
\end{acknowledgements*}

\section{Topology and geometry of $M_{\widehat\gamma}$}\label{s1}

In this section we start by noticing that the canonical lift can be defined for a larger set of closed curves on surfaces and recall a special type of homotopies between them which preserve the topological type of their canonical lift complement. Later, in Theorem \ref{charac} we prove that for any non null-homologous closed geodesic $\gamma,$ the topological type of $M_{\widehat\gamma}$ is determined by its mapping class group orbit. Finally, we set some preliminaries on 3-dimensional manifolds in order to investigate the topological and geometrical features of $M_{\widehat\gamma},$ for example we recall why the filling condition on $\gamma$ implies that $M_{\widehat\gamma}$ admits a unique hyperbolic structure.

\subsection{Transversal homotopies}\label{th}

 Any immersed closed curve $\gamma$ on $\Sigma,$ has a canonical lift into  $PT^1\Sigma$ by considering\string: $$\widehat\gamma(s):=\left(\gamma(s),\frac{\dot \gamma(s)}{ \|\dot\gamma(s) \|}\right).$$ 
It happens to be an embedding if $\gamma$ is self-transverse, that is, an immersion without self-tangency points. Furthermore, we say that a homotopy $\fun{h}{I\times \mathbb{S}^1}{\Sigma},$ between two self-transverse closed curves $h_0$ and $h_1,$ is \textit{transversal}, if $h$ is smooth and $h_t$ is self-transverse for all $t\in I.$ 
 \vskip .2cm
  Notice that a transversal homotopy $h$ on $\Sigma,$ induces an isotopy between the respective canonical lifts $\widehat h_0$ and $\widehat h_1,$
  $$\fun{\widehat h}{ I\times\mathbb{S}^1}{PT^1\Sigma}
\hspace{.2cm}\mbox{where} \hspace{.2cm} \widehat h (t,s)=\left(h_t(s),\frac{\dot h_t(s)}{ \|\dot h_t(s) \|}\right).$$
By the isotopy extension lemma, there is an ambient isotopy $\fun{H}{I\times PT^1\Sigma}{PT^1\Sigma}$ between $\widehat h_0$ and $\widehat h_1,$ extending the isotopy $\widehat h,$ that satisfies\string:
$$H (t,\widehat h_0(s))=\widehat h_t(s).$$
It follows that the topological type of the canonical lift complement of a self-transverse closed curve is invariant under transversal homotopies. Recall now that in \cite{HS94} Hass and Scott proved that two homotopic self-transverse closed curves with minimal self-intersection, are transversally homotopic. Therefore, for any self-transverse closed curve with minimal self-intersection, its canonical lift complement is homeomorphic to the canonical lift complement of the unique closed geodesic associated to it.
 \vskip .2cm
Putting all this together we get the following fact, which we record here for later use:

\begin{lemma*}
The homeomorphism type of $M_{\widehat\gamma}$ is independent of the chosen hyperbolic metric on the hyperbolic surface.
\hfill $\square$
\end{lemma*}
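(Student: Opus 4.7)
The plan is to show that the lemma follows by collecting together the three ingredients already displayed in this subsection: the independence of $PT^1\Sigma$ from the metric (as a smooth bundle), the Hass--Scott theorem, and the isotopy-extension argument.

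First I would fix the underlying smooth surface $S$ and note that the projective unit tangent bundle $PT^1\Sigma$ is, as a smooth $3$-manifold, canonically identified with the projective tangent bundle of $S$; in particular, for two hyperbolic metrics $X_0$ and $X_1$ on $S$, the spaces $PT^1(S,X_0)$ and $PT^1(S,X_1)$ are canonically the same smooth manifold. This is the setting in which the comparison of $M_{\hat\gamma}$ for different metrics takes place.

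Next, given a free homotopy class represented by a closed curve, let $\gamma_0$ and $\gamma_1$ be its geodesic representatives with respect to $X_0$ and $X_1$ respectively. Being hyperbolic geodesics, both curves are self-transverse (they cannot have self-tangencies) and both realize the minimal self-intersection number in their homotopy class. By the theorem of Hass and Scott \cite{HS94} recalled above, any two homotopic self-transverse closed curves with minimal self-intersection are transversally homotopic; hence there exists a transversal homotopy $\fun{h}{I\times\mathbb{S}^1}{S}$ with $h_0=\gamma_0$ and $h_1=\gamma_1$.

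Finally, I would invoke the observation already made in this subsection: a transversal homotopy $h_t$ lifts to an isotopy $\hat h_t$ of the canonical lifts inside $PT^1\Sigma$, and by the isotopy extension lemma this extends to an ambient isotopy $H_t$ of $PT^1\Sigma$ carrying $\hat\gamma_0$ to $\hat\gamma_1$. Taking complements of tubular neighborhoods, $H_1$ restricts to a homeomorphism $M_{\hat\gamma_0}\cong M_{\hat\gamma_1}$, which is exactly the statement of the lemma. The only possible obstacle is verifying that Hass--Scott applies in our smooth category and that the resulting transversal homotopy is smooth enough to have its lift well-defined, but this is standard and is implicit in the discussion that precedes the lemma; hence no additional argument is needed.
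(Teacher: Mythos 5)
Your proof is correct and follows essentially the same route as the paper: the lemma is recorded there precisely as the combination of the three facts you cite (transversal homotopies lift to isotopies of canonical lifts, the isotopy extension lemma gives an ambient isotopy, and Hass--Scott guarantees a transversal homotopy between the two geodesic representatives, both of which are self-transverse and realize the minimal self-intersection number). Your explicit remark that $PT^1\Sigma$ is canonically the same smooth manifold for both metrics is a point the paper leaves implicit, but it is the same argument.
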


In the constructions of examples around this article, it will be important to characterize when a closed curve has minimal self-intersection. In (\cite{HS85}, Theorem 2) Hass and Scott proved the following result which will help us to decide when a closed curve on a surface has this property\string:

\begin{theorem}[Hass-Scott]\label{2-gone}
Let $\alpha$ be a closed curve on a surface which has excess self-intersection. Then there is a singular $1$-gon or $2$-gon on the surface bounded by part of the image of $\alpha.$ \end{theorem}

 \begin{figure}[h]
\centering
\includegraphics[scale=0.25] {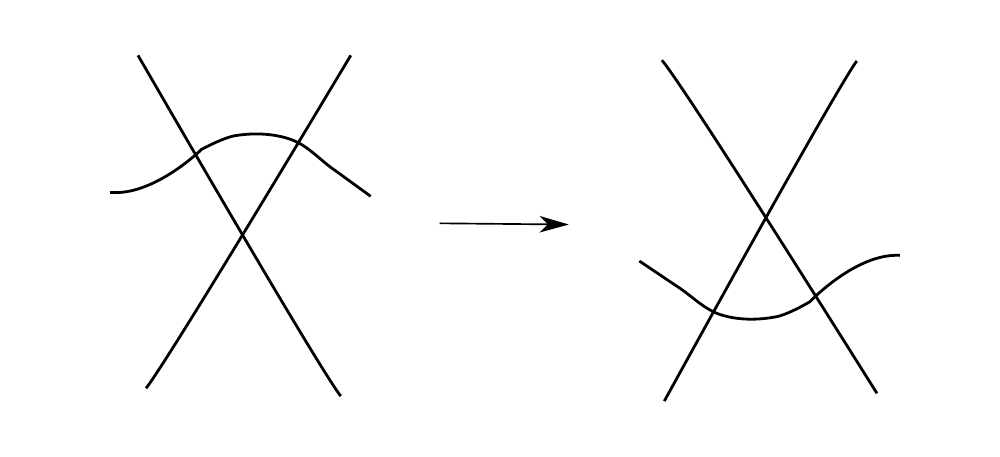}
\caption{A third Reidemeister move.
}\label{reide}
\end{figure}

A \textit{third Reidemeister move} on a closed curve is a local move which corresponds to pushing a branch of a curve across a self-intersection, as shown in Figure \ref{reide}, that is a special type of transversal homotopy. In (\cite{HS94}, Theorem 2.1, and \cite{GS97} , Theorem 1) Hass and Scott, and later De Graaf and Schrijver, proved that if two closed curves with minimal self-intersection are in the same homotopy class, then there is a sequence of third Reidemeister moves and an ambient isotopy on the surface between them. Consequently, we have the following result which might exist in the literature of curves on surfaces, but we were not able to find it in this form useful for our study.

\begin{corollary*} 
Let $\gamma$ and $\eta$ be two homotopic self-transverse closed curves with minimal self-intersection on $\Sigma,$ then there is a homeomorphism between the non simply connected components of $\Sigma\setminus\gamma$ and $\Sigma\setminus\eta.$\hfill $\square$\end{corollary*}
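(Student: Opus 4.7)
The plan is to reduce to a local analysis of a single third Reidemeister move by invoking the Hass-Scott/De Graaf-Schrijver theorem quoted just above: any two homotopic self-transverse curves with minimal self-intersection are related by a finite sequence of R3 moves together with an ambient isotopy of $\Sigma$. Since ambient isotopies carry complements to complements as homeomorphisms, the whole corollary reduces to the following local claim: a single R3 move preserves the homeomorphism type of each non-simply-connected component of the complement.

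To establish this local claim, I would fix an R3 move between $\gamma$ and $\gamma'$, supported in an embedded disk $D \subset \Sigma$; inside $D$, each of the two curves consists of three arcs pairwise crossing at three interior points and sharing the same six endpoints on $\partial D$, while outside $D$ they agree. A short Euler-characteristic count then shows that both $D \setminus \gamma$ and $D \setminus \gamma'$ decompose into seven components: a central open triangle disjoint from $\partial D$, and six peripheral open disks, each meeting $\partial D$ in a single arc. The two central triangles differ by being \emph{flipped} across the R3 vertex, but topologically each is just an open disk.

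Setting $N := (\Sigma \setminus D^\circ) \setminus \gamma = (\Sigma \setminus D^\circ) \setminus \gamma'$, the complement $\Sigma \setminus \gamma$ is then reconstructed from $N$ by attaching each of the six peripheral disks along its single arc on $\partial D$ and taking the disjoint union with the central triangle; the analogous construction produces $\Sigma \setminus \gamma'$ from the \emph{same} $N$. Since attaching a $2$-disk along a boundary arc of a surface preserves Euler characteristic, orientability, and number of boundary components of the component to which it is attached, and since each peripheral disk meets $\partial D$ in only one arc (so it cannot merge two components of $N$), the two reconstructions are homeomorphic component-by-component, via a homeomorphism that is the identity on $N$.

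Iterating over the finite sequence of R3 moves and composing with the final ambient isotopy produces the desired homeomorphism between $\Sigma \setminus \gamma$ and $\Sigma \setminus \eta$, which in particular restricts to the non-simply-connected components. The main subtlety, and the step I would be most careful with, is the claim that each of the six peripheral regions inside $D$ meets $\partial D$ in \emph{exactly} one arc: this is what makes the attachment benign, and it follows from counting boundary incidences against the fact that the only interior-bounded region produced by three pairwise-crossing chords in a disk is the central triangle.
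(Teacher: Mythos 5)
Your proposal is correct and follows exactly the route the paper intends: reduce to the De Graaf--Schrijver/Hass--Scott statement that homotopic minimally self-intersecting curves differ by third Reidemeister moves plus ambient isotopy, and then check that a single such move does not change the complement (the paper leaves this local check unwritten, and your count of the seven regions of the triple-crossing disk, with each peripheral region meeting $\partial D$ in exactly one arc, is the right way to fill it in). In fact your argument yields the slightly stronger conclusion that \emph{all} components of $\Sigma\setminus\gamma$ and $\Sigma\setminus\eta$ correspond homeomorphically, which of course implies the stated restriction to the non--simply-connected ones.
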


A closed curve $\alpha$ with minimal self-intersection on a surface $\Sigma$ is said to be \textit{filling} if $\Sigma\setminus\alpha$ is a collection of disks or once-punctured disks; equivalently, if its geometric intersection with respect to any essential simple closed curve is not zero. By the previous result, we conclude that this property is preserved by transversal isotopies.

\begin{corollary} \label{comp}
Given a self-transverse closed curve with minimal self-intersection, then the property of being filling over a surface is preserved by transversal homotopies.
\hfill $\square$
\end{corollary}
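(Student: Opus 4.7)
The plan is to reduce everything to facts already at hand by checking two things: that the endpoint of a transversal homotopy still has minimal self-intersection, and that filling is a property of the free homotopy class once that minimality is known. Let $\gamma$ be a self-transverse closed curve with minimal self-intersection that fills $\Sigma$, and let $\eta$ be obtained from $\gamma$ by a transversal homotopy $h$.

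First I would verify that $\eta$ also has minimal self-intersection. Pulling back the self-intersections of each slice $h_t$ to the quotient $(\mathbb{S}^1\times\mathbb{S}^1\setminus\Delta)/\mathbb{Z}_2$, the transversality of $h_t$ makes the union $C\subset I\times(\mathbb{S}^1\times\mathbb{S}^1\setminus\Delta)/\mathbb{Z}_2$ a smooth 1-submanifold whose projection to $I$ is a submersion. A sequence in $C$ escaping the open locus would have to limit to the diagonal, producing a self-tangency at some $h_t$, which is forbidden; hence $C$ is compact. By the classification of compact 1-manifolds, combined with the submersion condition ruling out circle components (whose projections to $I$ would inevitably have critical points), $C$ is a disjoint union of arcs running from $\{0\}\times(\mathbb{S}^1\times\mathbb{S}^1\setminus\Delta)/\mathbb{Z}_2$ to $\{1\}\times(\mathbb{S}^1\times\mathbb{S}^1\setminus\Delta)/\mathbb{Z}_2$. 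Thus the number of self-intersections of $h_t$ is constant in $t$, and $\eta$ has the same number of self-intersections as $\gamma$, which is minimal in their common free homotopy class.

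With $\eta$ self-transverse and of minimal self-intersection, the equivalent characterisation of filling stated in the paragraph just before the corollary applies to both $\gamma$ and $\eta$: filling means $i(\cdot,\beta)>0$ for every essential simple closed curve $\beta$. Since the geometric intersection number $i(\cdot,\beta)$ depends only on the free homotopy class, and $\gamma$, $\eta$ are freely homotopic via $h$, this condition transfers from $\gamma$ to $\eta$, so $\eta$ fills.

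The main obstacle is the verification that transversal homotopies preserve the self-intersection count; once that is in hand, the filling property transfers immediately through the intersection-number characterisation. An alternative, less analytic route would be to invoke the unnumbered Corollary appearing just above: the non-simply-connected components of $\Sigma\setminus\gamma$ and $\Sigma\setminus\eta$ are homeomorphic, forcing the decomposition of $\Sigma\setminus\eta$ into disks and once-punctured disks to be inherited from that of $\Sigma\setminus\gamma$.
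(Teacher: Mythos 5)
Your argument is correct, and it differs in emphasis from the paper's. The paper disposes of this corollary in one line: it invokes the unnumbered Corollary just above (homotopic self-transverse curves with minimal self-intersection have homeomorphic non-simply-connected complementary components), which immediately transfers the ``complement is disks and once-punctured disks'' formulation of filling from one endpoint of the homotopy to the other. Your primary route instead goes through the second, equivalent characterisation (nonvanishing geometric intersection number with every essential simple closed curve), which is a free-homotopy invariant by definition, so the transfer is also immediate --- you only mention the paper's route as an afterthought. The genuinely valuable part of your write-up is the cobordism argument showing that the double-point locus $C\subset I\times(\mathbb{S}^1\times\mathbb{S}^1\setminus\Delta)/\mathbb{Z}_2$ is a compact $1$-manifold submersing onto $I$, hence a union of monotone arcs, so the self-intersection count is constant along a transversal homotopy. \emph{Both} routes need this fact, since minimality of self-intersection at the terminal curve is a hypothesis of the unnumbered Corollary and is also needed for the equivalence of the two definitions of filling; the paper leaves it implicit in its discussion of transversal homotopies, so your verification fills a real (if standard) gap. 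One small point of rigour: when you rule out sequences in $C$ limiting to the diagonal, the cleanest justification is that each $h_t$ is an immersion and a $C^1$-small family of immersions is uniformly locally injective, rather than an appeal to a limiting self-tangency; but the conclusion (compactness of $C$) is the same.
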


\subsection{Topological type of  $M_{\widehat\gamma}$}

In this subsection we prove that the canonical lift complement of a non null-homologous closed geodesic is determined by its mapping class group orbit. 

\begin{reptheorem}{charac}
Let $\gamma$ and $\eta$ be two closed geodesics, where the class of $\gamma$ in $H_1(\Sigma; \mathbb{R})$ is not trivial. Then $M_{\widehat\gamma}$  is homeomorphic to $M_{\widehat\eta}$ if and only if there is a diffeomorphism $\phi$ of $\Sigma$ such that $\phi(\gamma)$ is homotopic to $\eta$. \end{reptheorem}

\begin{proof}[\bf{Proof}]
By the discussion at the beginning of subsection \ref{th} and the fact that diffeomorphisms  preserve self-intersection number, we imply that closed geodesics in the same mapping class group orbit have homeomorphic canonical lift complements. This reduces the proof to one implication. 
\vskip .2cm
The first step is proving that every homeomorphism $\fun{f}{M_{\widehat\gamma}}{M_{\widehat\eta}},$ can be extended to a self-homeomorphism $\widehat f$ of $PT^1\Sigma$ with $\widehat f(\widehat\gamma)=\widehat\eta.$ Notice that if $f$ maps $m_{\widehat\gamma},$ the meridian of $\widehat\gamma,$ to the meridian of $\widehat\eta,$ then we can extend $f$ to the disks that they bound inside $PT^1\Sigma.$ Moreover, we can extend this homeomorphism along the core of $\mathcal{N}_{\widehat\gamma},$ the normal neigborhood of $\widehat\gamma,$ to $\mathcal{N}_{\widehat\eta}.$ As the morphism in homology induced by the restriction of $f$ on $\partial\mathcal{N}_{\widehat\gamma}$ is an isomorphism, then $H_1(f_{\mid\partial \mathcal{N}_{\widehat\gamma}})(\Ker(H_1(i_{\widehat\gamma})))=\Ker(H_1(i_{\widehat\eta})),$ where $i_{\widehat\gamma}$ is the inclusion of $\partial\mathcal{N}_{\widehat\gamma}$ into $ M_{\widehat\gamma},$ and analogous for $i_{\widehat\eta}.$ So we just need to prove the following claim\string:

\begin{claim}\label{mer}
If the class of $\gamma$ in $H_1(\Sigma; \mathbb{R})$ is not trivial, then $\Ker(H_1(i_{\widehat\gamma}))$ is generated by the class of $m_{\widehat\gamma}$ in $H_1(\partial \mathcal{N}_{\widehat\gamma}; \mathbb{R}).$
\end{claim}

Assuming Claim \ref{mer} we conclude the proof of Theorem \ref{charac}, by showing that $\widehat f$ induces a diffeomorphism on the surface $\Sigma.$ 
Using the fact that the kernel of the morphism  $\fun{\pi_*}{\pi_1(PT^1\Sigma)}{\pi_1(\Sigma)}$ is characteristic, we have the following morphism,
 
$$\Out(\pi_1(PT^1\Sigma))\rightarrow \Out(\pi_1(\Sigma)).$$

By the Baer-Dehn-Nielsen Theorem, there exist $\phi\in \Diff(\Sigma)$ associated to $\widehat f$ such that, 
$$\phi_*\circ \pi_*=\pi_*\circ  \widehat f_*.$$
Then $\phi_*([\gamma])=\phi_*\circ \pi_*([\widehat\gamma])=\pi_*\circ \widehat f_*([\widehat\gamma])=\pi_*([\widehat\eta])=[\eta],$ meaning that $\phi(\gamma)$ is 
homotopic to $\eta.$
 \end{proof}
It remains to prove Claim \ref{mer}\string:
\begin{proof}[Proof of Claim \ref{mer}] 
Notice that the image of the homology class of a longitude in $\partial \mathcal{N}_{\widehat\gamma}$ under $H_1(i_{\widehat\gamma})$ is not trivial in $H_1(M_{\widehat\gamma}; \mathbb{R})$ because $[{\widehat\gamma}]$ is not trivial in $H_1(PT^1\Sigma; \mathbb{R})$ by hypothesis. Then by considering the Mayer-Vietoris sequence in homology for the triad $(PT^1\Sigma, M_{\widehat\gamma}, \mathcal{N}_{\widehat\gamma}),$ we only need to show that there is an element in $H_2(PT^1\Sigma)$ whose image  under $\widehat\delta$, the connecting morphism of the sequence, is a non trivial element of $ H_1(\partial \mathcal{N}_{\widehat\gamma}).$
\vskip .2cm
Since the class of $\gamma$ is not trivial in $H_1(\Sigma; \mathbb{Z}),$  there is a non separating simple closed curve $\alpha$ such that $n[\alpha]=[\gamma]$ for some non zero integer $n$ (\cite{FM12}, Proposition 6.2). Choose $[\beta]$ a primitive element in $H_1(\Sigma; \mathbb{Z})$ such that its algebraic intersection with $[\alpha]$ is one. Consider $T_\beta$ the pre-image of $\beta,$ a simple closed geodesic, under the map $PT^1\Sigma\rightarrow\Sigma,$ which is a torus because $PT^1\Sigma$ is orientable.  Then $$\widehat\delta([T_\beta])=[\partial(T_\beta\cap \mathcal{N}_{\widehat\gamma})]=\langle[\beta],[\gamma]\rangle[m_{\widehat\gamma}]=n[m_{\widehat\gamma}],\hspace{.2cm}\mbox{with} \hspace{.2cm}n\neq 0.$$
\end{proof}

We conclude this subsection by giving a second proof of the fact that two closed geodesics in the same mapping class group orbit have homeomorphic canonical lift complements. This alternative proof relies on an action of the mapping class group of $\Sigma$ into its unit tangent bundle, which leaves invariant the foliation given by the geodesic flow.
\vskip .2cm
 Let $\phi$ be in $\Diff(\Sigma)$ and consider $\fun{\bar \phi}{\mathbb{H}^2}{\mathbb{H}^2}$ a lift  of $\phi.$ In that case we can give rise to a $\phi_*$-equivariant homeomorphism of the boundary at infinity of the hyperbolic space to itself,
 $$\fun{\partial \bar \phi}{\partial\mathbb{H}^2}{\partial\mathbb{H}^2}.$$
Using the fact that $T^1\mathbb{H}^2$ is diffeomorphic to
$$\{(a_1,a_2,a_3)\in\partial\mathbb{H}^2\times \partial\mathbb{H}^2\times \partial\mathbb{H}^2\mid a_i\neq a_j  \forall  i\neq j\} ,$$
quotiented via the fix-point free involution,
$$(a_1,a_2,a_3) \mapsto (a_2,a_1,a_3).$$
This diffeomorphism maps $(a_1,a_2,a_3)$ to the unique unit tangent vector $v$ normal to the geodesic in $\mathbb{H}^2$ with endpoints $a_1,a_2$  and pointing to $a_3.$
 \vskip .2cm
 From this point of view, we induce a $\phi_*$-equivariant map,
 $$\fun{\Phi}{T^1\mathbb{H}^2}{T^1\mathbb{H}^2}\hspace{.2cm}\mbox{such that} \hspace{.2cm} {\Phi}(x,y,z)=(\partial \bar \phi(x),\partial \bar \phi(y),\partial \bar \phi(z)) .$$
The $\phi_*$-equivariance property, implies that the map ${\Phi}$ descends to
 $$\fun{\widehat\Phi}{PT^1\Sigma}{PT^1\Sigma},$$
 sending the orbits of the geodesic flow to themselves.

\subsection{Hyperbolicity of  $M_{\widehat\gamma}$}

In this subsection we study some $3$-manifold properties of $M_{\widehat\gamma}.$ In particular we describe the JSJ-decomposition of $M_{\widehat\gamma},$ this is mainly due to Foulon and Hasselblatt (\cite{FH13}, Theorem 1.12), which states that the canonical lift complement over a filling closed geodesic is hyperbolic. This result is establish by verifying the hypotheses of the Hyperbolization Theorem \cite{Thu82}\string:

\begin{theorem*}[\bf{Hyperbolization Theorem}]
The interior of a closed $3$-mani\-fold $N$ with torus boundary admits a complete hyperbolic metric with finite volume if and only if it is irreducible, (homotopically) atoroidal, with infinite fundamental group and not homeomorphic to $\mathbb{S}^1\times \mathbb{D}^2,$ $\mathbb{T}^2\times[0,1]$ or $K\tilde\times[0,1]$ (the twisted interval bundle over the Klein bottle).
\end{theorem*}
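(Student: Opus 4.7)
The plan is to follow Thurston's geometrization strategy for Haken manifolds, since any compact irreducible, atoroidal $3$-manifold with non-empty torus boundary and infinite fundamental group is automatically Haken. I would first handle the "only if" direction: in a finite-volume complete hyperbolic $3$-manifold, the universal cover $\mathbb{H}^3$ is contractible, so $N$ is aspherical and hence irreducible; any essential torus would yield a $\mathbb{Z}^2 \subset \mathrm{Isom}(\mathbb{H}^3)$ which is necessarily parabolic, and therefore corresponds to a cusp rather than to an essential embedded surface; each of the three excluded manifolds is Seifert fibered over a base with non-hyperbolic geometry, ruling out a finite-volume hyperbolic structure (for example by exhibiting a $\mathbb{Z}^2$ subgroup of its fundamental group which cannot be realised by a cusp).

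For the harder "if" direction, the strategy is inductive along a hierarchy. First I would choose a sequence of essential surfaces $S_1,\ldots,S_k$ in $N$ such that cutting successively along them produces a disjoint union of $3$-balls; such a hierarchy exists for any Haken $3$-manifold by Haken's finiteness theorem. At the base of the hierarchy each ball carries an obvious hyperbolic structure with convex boundary, determined by the conformal structure we choose on its boundary. Then I would reverse the process: at each step I have a hyperbolic structure on the pieces obtained after cutting along $S_i$ and wish to glue them into a hyperbolic structure on the piece before cutting. When the gluing identifies two boundary components lying on distinct pieces, the Maskit combination theorems produce the amalgamated hyperbolic structure as soon as the boundary conformal structures agree.

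The central difficulty, and the main obstacle of the whole argument, is the case where the gluing identifies two boundary components of a single piece $M$, producing an HNN extension. Here one must find a fixed point of the \emph{skinning map} $\sigma_M \colon \mathcal{T}(\partial M) \to \mathcal{T}(\partial M)$ composed with the mapping class realising the identification. Thurston establishes the existence of such a fixed point using his Double Limit Theorem together with a careful analysis of how the skinning map contracts Teichm\"uller space; McMullen later gave a cleaner proof using holomorphic motions and the Ahlfors--Bers parameterisation of quasi-Fuchsian space. This fixed-point step is by far the deepest input of the theorem, and in the present context it is sensible to invoke the Hyperbolization Theorem as a black box and simply verify its hypotheses for $M_{\hat\gamma}$, as the paper proceeds to do.
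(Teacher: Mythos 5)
The paper does not prove this statement at all: it is quoted as Thurston's Hyperbolization Theorem with a citation to \cite{Thu82} and is used purely as a black box whose hypotheses are then checked for $M_{\hat\gamma}$, so there is no in-paper argument to compare yours against. Your outline is the standard one for the Haken case (hierarchy, Maskit combination for amalgamations, fixed point of the skinning map for HNN gluings), it is broadly accurate as a sketch, and you rightly conclude that the sensible course here is to invoke the theorem rather than reprove it. Two inaccuracies are worth flagging. First, $\pi_1(\mathbb{S}^1\times D^2)\cong\mathbb{Z}$ contains no $\mathbb{Z}^2$, so the cusp argument you offer for the excluded manifolds only applies to $T^2\times[0,1]$ and $K\tilde\times[0,1]$; the solid torus is excluded because its fundamental group is elementary (equivalently, because its boundary torus is compressible, whereas each cusp torus of a finite-volume hyperbolic manifold is $\pi_1$-injective). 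Second, the Double Limit Theorem is the engine of the \emph{fibered} case (manifolds fibering over the circle, which fall outside the hierarchy induction and must be treated separately, e.g.\ punctured-torus bundles); the existence of a fixed point of the skinning map in the non-fibered Haken case rests instead on the Bounded Image Theorem, and a complete proof of the statement as quoted would need both branches. Neither slip affects the role the theorem plays in the paper.
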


We will begin by reviewing our list of definitions. For technical details see \cite{He76} and \cite{JS79}.
\vskip .2cm
Suppose $N$ is a compact, orientable $3$-manifold with boundary. An embedded surface in $N$ is said to be \textit{compressible} if there is an essential circle (that is, one that bounds no disk in the surface) on it that bounds a disk in $N.$ In the case where the embedded surface is not compressible, is said to be \textit{incompressible}. 

An incompressible torus is \textit{boundary-parallel} if there is an isotopy from it into $\partial N.$ That is, there is an embedded $\mathbb{T}^2 \times [0,1]$ such that $\mathbb{T}^2 \times \{0\}$ parametrizes the given torus and $\mathbb{T}^2 \times \{1\}$ a boundary component. 
 \vskip .2cm
The $3$-manifold $N$ is said to be \textit{irreducible} if every embedded $2$-sphere bounds a ball. In this case we also say that $N$ is \textit{homotopically atoroidal} if every $\pi_1$-injective map from the torus to $N$ is homotopic to a map into  $\partial N.$  Being homotopically atoroidal is a stronger property than being \textit{atoroidal }(here, atoroidal means that every incompressible torus in $N$ is boundary-parallel). Nevertheless, the two notions agree except for some Seifert-fibered manifolds (see \cite{JS79}, Lemma IV.2.6), which will not appear in this chapter. 
\vskip .2cm
Let us investigate for a given closed geodesic $\gamma$ on $\Sigma,$ which of properties on the Hyperbolization Theorem are satisfied by its canonical lift complement:
\begin{itemize}\setlength\itemsep{1em}
\item $M_{\widehat\gamma}$ is irreducible,  because if $\mathbb{S}^2$ is embedded in $M_{\widehat\gamma}$ and hence into $PT^1\Sigma$, then it bounds a ball $B$ in $PT^1\Sigma,$ by irreducibility of $PT^1\Sigma.$  As $\mathbb{S}^2\cap\widehat\gamma=\emptyset$  we conclude that $\widehat\gamma$ is embedded outside $B,$ because if $\widehat\gamma$ is inside $B,$ then its projection to $\Sigma$  would be nullhomotopic. 
\item $\pi_1(M_{\widehat\gamma})$ is infinite, because the inclusion map $\fun{i}{M_{\widehat\gamma}}{PT^1\Sigma}$ induces,  by transversality, a $\pi_1$-surjective map and $\pi_1(PT^1\Sigma)$ is infinite.
\end{itemize}
\begin{definition}\label{split} \normalfont 
Let $\eta$ be a simple closed geodesic on $\Sigma,$ then we define the following embedded surfaces on $M_{\widehat\gamma}$\string:
$$T_\eta  \hspace{.2cm}  \mbox{the pre-image of} \hspace{.2cm}  \eta \hspace{.2cm} \mbox{under the map} \hspace{.2cm} PT^1\Sigma\rightarrow\Sigma,\hspace{.2cm}\mbox{if} \hspace{.2cm} i(\eta,\gamma)=0,$$
and
$$(T_\eta)_{\widehat\gamma}:=(T_\eta)\setminus \mathcal{N}_{\widehat\gamma}, \hspace{.2cm} \mbox{if} \hspace{.2cm}i(\eta,\gamma)\neq 0.$$
\end{definition}
\begin{itemize}\setlength\itemsep{1em}
\item For the atoroidality of $M_{\widehat\gamma}$, notice that if a simple closed geodesic $\eta$ does not intersects $\gamma$ in $\Sigma.$ Then $T_\eta$ is an embedded incompressible torus in $M_{\widehat\gamma},$ (\cite{FH13}, Theorem B.8) which is not boundary-parallel. Therefore, a necessary condition for $M_{\widehat\gamma}$ to be atoroidal is that $\gamma$ fills $\Sigma.$ 
\item Conversely, Foulon and  Hasselblatt proved in (\cite{FH13}, Theorem 1.12) with the advising of Calegari, Fenley, Otal and Walsh, that the filling condition on $\gamma$ is sufficient to provide atoroidality to $M_{\widehat\gamma},$ and consequently a unique hyperbolic structure with finite volume.
\end{itemize}

\begin{theorem*}[Hyperbolic Structures on Canonical Lift Complements]
If $\gamma$ is a filling self-transverse closed curve with minimal self-intersection number on a hyperbolic surface $\Sigma,$ and $\widehat\gamma$ the canonical lift of $\gamma.$ Then $M_{\widehat\gamma}$ is a hyperbolic manifold of finite volume.
\end{theorem*}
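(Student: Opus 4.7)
The plan is to verify the four hypotheses of the Hyperbolization Theorem stated above for $M_{\tilde\gamma}$: irreducibility, infinite fundamental group, homotopical atoroidality, and not being homeomorphic to one of the listed exceptional Seifert-fibered pieces. Three of the four verifications are essentially formal, and the real content lies in atoroidality.

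For irreducibility, I would argue exactly as in the analysis of $M_{\hat\gamma}$ earlier in the section: $PT^1\Sigma$ is irreducible (being aspherical, as a circle bundle over the $K(\pi,1)$ surface $\Sigma$), so any embedded $\mathbb{S}^2\subset M_{\tilde\gamma}$ bounds a ball $B\subset PT^1\Sigma$. If $\tilde\gamma$ were contained in $B$, then its projection $\gamma$ would be null-homotopic in $\Sigma$, contradicting the filling hypothesis; hence $B\subset M_{\tilde\gamma}$. For infinite $\pi_1$, a transversality argument shows that the inclusion $M_{\tilde\gamma}\hookrightarrow PT^1\Sigma$ induces a surjection on $\pi_1$ with kernel normally generated by the meridian of $\tilde\gamma$; since $\pi_1(PT^1\Sigma)$ surjects onto the infinite group $\pi_1(\Sigma)$, the group $\pi_1(M_{\tilde\gamma})$ is infinite. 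The same surjectivity, together with a count of torus boundary components of $M_{\tilde\gamma}$, excludes the cases $\mathbb{S}^1\times D^2$, $T^2\times[0,1]$, and $K\tilde\times[0,1]$.

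The principal obstacle is atoroidality. Given an incompressible non-boundary-parallel torus $T\subset M_{\tilde\gamma}$, the idea is to push $T$ into standard form inside the Seifert-fibered space $PT^1\Sigma$. By the Jaco-Shalen theory (\cite{JS79}) of incompressible surfaces in Seifert-fibered $3$-manifolds, any such $T$ is isotopic in $PT^1\Sigma$ either to a vertical torus $T_\eta$ (the preimage of a simple closed curve $\eta\subset\Sigma$) or to a horizontal surface; the horizontal case is excluded because a horizontal torus would yield a finite cover $T^2\to\Sigma$, impossible for hyperbolic $\Sigma$. The hard part — and what Foulon-Hasselblatt (\cite{FH13}, Theorem 1.12) carry out carefully — is executing this isotopy while avoiding the knot $\tilde\gamma$, using that a continuous lift of a self-transverse $\gamma$ meets each fiber of $PT^1\Sigma\to\Sigma$ at worst in the self-intersection points of $\gamma$. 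Once $T$ has been isotoped to a vertical $T_\eta$ disjoint from $\tilde\gamma$, the projected curve $\eta$ in $\Sigma$ is simple and disjoint from $\gamma=\pi(\tilde\gamma)$, contradicting that $\gamma$ fills.

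Combining these four verifications, the Hyperbolization Theorem endows $M_{\tilde\gamma}$ with a complete hyperbolic metric of finite volume, and Mostow rigidity makes it unique. The filling hypothesis enters only in the atoroidality step, through the avoidance of vertical tori $T_\eta$, which makes the condition sharp.
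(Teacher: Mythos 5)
Your proposal follows the same route as the paper, which likewise verifies irreducibility and infinite fundamental group directly against the Hyperbolization Theorem and defers the essential atoroidality step to Foulon--Hasselblatt (\cite{FH13}, Theorem 1.12); the statement is a quoted external result that the paper does not reprove. One caution about your sketch: the vertical/horizontal dichotomy of Jaco--Shalen applies to tori incompressible in $PT^1\Sigma$ itself, so the deferred argument must also dispose of tori that are incompressible only in $M_{\tilde\gamma}$ and compress in $PT^1\Sigma$ through disks meeting $\tilde\gamma$ --- but that is precisely the part you (and the paper) leave to the reference.
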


In the case where $\gamma$ is just a non simple closed geodesic, that is $M_{\widehat\gamma}$ is an irreducible $3$-manifold with torus boundary, then by JSJ-decomposition Theorem \cite{JS178,Joh79}, we can canonically split $M_{\widehat\gamma}$ along a finite (maybe empty) collection of disjoint and non-parallel nor boundary-parallel incompressible embedded tori into connected components which are either atoroidal or Seifert fibered\string:

\begin{theorem*}[\bf{JSJ-decomposition}]
Let $N$ be a compact, irreducible $3$-mani\-fold with boundary. In the interior of $N,$ there exists a family $C = \{T_1,...,T_r\}$ of disjoint tori that are incompressible and not boundary parallel, with the following properties\string:
\begin{enumerate}
\item each connected component of $N\setminus C$ is either a Seifert manifold or is atoroidal;
\item the family $C$ is minimal among those satisfying $(1).$
\end{enumerate}
Moreover, such a family $C$ is unique up to ambient isotopy.
\end{theorem*}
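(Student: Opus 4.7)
The plan is to prove existence (parts (1) and (2)) and uniqueness separately. For existence, I would first invoke Haken's finiteness theorem, a classical consequence of normal surface theory in a triangulated compact irreducible $3$-manifold, which bounds the cardinality of any family of pairwise disjoint, pairwise non-isotopic, properly embedded incompressible surfaces of bounded Euler characteristic. Applied to tori in $N$, this produces a \emph{maximal} family $C = \{T_1, \ldots, T_r\}$ of disjoint, pairwise non-parallel, and non-boundary-parallel incompressible tori, automatically finite.

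Next I would analyze each connected component $P$ of $N \setminus C$. By maximality, every essential torus in $P$ must be isotopic into $\partial P$; either $P$ contains no essential torus at all and is therefore atoroidal, or every essential torus in $P$ is boundary-parallel while $P$ itself is not atoroidal. In the second case the substantive input is the characteristic submanifold theory of Jaco--Shalen and Johannson, which, together with the Torus and Annulus Theorems, implies that a compact, irreducible $3$-manifold with boundary whose essential tori are all boundary-parallel and which is not atoroidal must be Seifert-fibered. This is the principal black box and the main obstacle of the whole argument. Property (2) then follows by iteratively discarding any $T_i$ whose removal allows the two adjacent pieces to merge into a single Seifert-fibered or atoroidal component.

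For uniqueness, I would take two minimal families $C$ and $C'$, isotope them to intersect transversely with minimal geometric intersection, and use incompressibility of the tori together with the irreducibility of $N$ to eliminate any intersection circles that bound disks on either side, so every remaining intersection circle is essential on both tori. An innermost-annulus cut-and-paste surgery will then reduce the intersection to zero without violating essentiality or minimality. Once $C$ and $C'$ are disjoint, each $T \in C$ sits in some piece $P'$ of $N \setminus C'$; essentiality of $T$ combined with the atoroidal-or-Seifert structure of $P'$ forces $T$ to be either parallel to a torus of $C' \cup \partial N$ or vertical in the Seifert fibration of $P'$, and the minimality of $C'$ excludes the latter. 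Hence $C$ and $C'$ agree up to ambient isotopy.
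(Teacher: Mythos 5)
The paper does not prove this statement: it is quoted verbatim as the classical JSJ-decomposition theorem and attributed to Jaco--Shalen and Johannson (\cite{JS178,Joh79}), so there is no in-paper argument to compare yours against. Judged on its own, your outline follows the standard route (Kneser--Haken finiteness for existence of a maximal torus family, the characteristic submanifold machinery as the black box), but it has two genuine problems.

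First, in the existence step your case analysis is internally inconsistent. With the geometric notion of atoroidality that the paper fixes just before this statement (every incompressible torus is boundary-parallel), a component $P$ in which ``every essential torus is boundary-parallel'' \emph{is} atoroidal, so your second case is vacuous and the maximal family satisfies $(1)$ trivially. The real content is then entirely in passing to a \emph{minimal} family: Seifert pieces only appear when you delete a torus and two adjacent components merge, and your greedy deletion only produces \emph{some} family that is minimal among subfamilies of the one you started with, not minimality among all families satisfying $(1)$, nor any canonicity. Second, and more seriously, the uniqueness argument breaks at the sentence claiming that an ``innermost-annulus cut-and-paste surgery'' reduces $C\cap C'$ to zero ``without violating essentiality or minimality.'' Two incompressible tori in an irreducible $3$-manifold cannot in general be isotoped to be disjoint (vertical tori over intersecting base curves in a Seifert piece are the standard obstruction), and an annulus swap produces a surface that need not be isotopic to the torus you started with, so nothing guarantees the new family is still the family $C$ up to isotopy. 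The correct argument does not first make the families disjoint by surgery; it analyzes the annuli in which a torus $T\in C$ meets each complementary piece of $C'$, uses the Annulus Theorem and the Seifert/atoroidal structure of those pieces to isotope $T$ off $C'$ or to identify it with a vertical torus, and only then invokes minimality. That analysis is the heart of the uniqueness proof and is missing here.
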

We recall that the \textit{characteristic submanifold of} $N$ is the union of all Seifert fibered JSJ-components of $N.$

\begin{corollary}\label{JSJGAMMA}
Let $\gamma$ be a non simple closed geodesic in $\Sigma.$  Denote by $\Sigma_1$ be the essential subsurface filled by $\gamma,$  $\Sigma_2$ the closure in $\Sigma$ of $\Sigma \setminus  \Sigma_1$ and $M_{1\widehat\gamma}:=PT^1\Sigma_1\setminus  \mathcal{N}_{\widehat\gamma}.$ Then the JSJ-decomposition of $M_{\widehat\gamma}$ in its atoroidal pieces and characteristic submanifold is\string: 
$$M_{1\widehat\gamma}\cup PT^1\Sigma_2,\hspace{.2cm} \mbox{with the respective splitting incompresible tori} \hspace{.2cm}T_{(\partial \Sigma_1\setminus \partial\Sigma)}.$$   \hfill $\square$
\end{corollary}

\begin{figure}[h]
\centering
\includegraphics[scale=0.4] {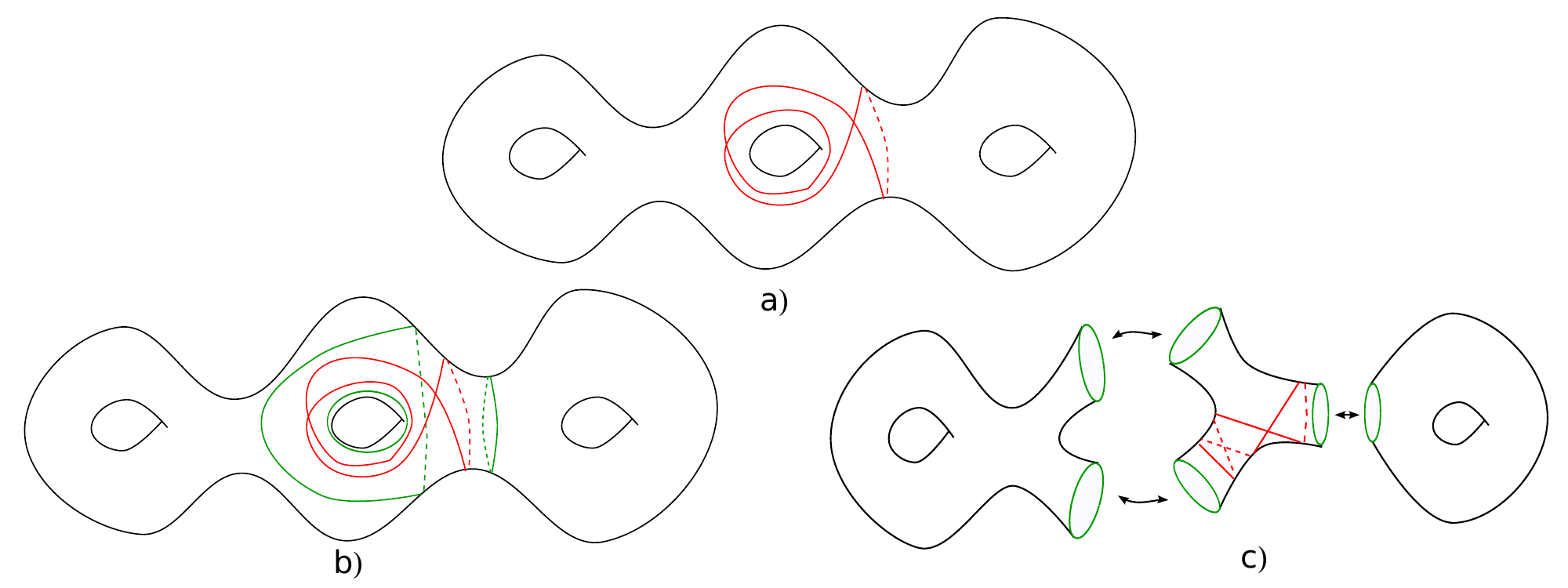}
\caption{ $a)$A non filling closed goedesic $\gamma$ on $\Sigma.$ \hspace{.2cm} $b)$ Finding the essential subsurface on $\Sigma$ filled by $\gamma.$ \hspace{.2cm} $c)$The projection of the JSJ-decomposition  of $M_{\widehat\gamma}.$
}\label{jsj}
\end{figure}

Lastly, we point out that the embedded surfaces $(T_\eta)_{\widehat\gamma}$ on Definition \ref{split} are also incompressible on $M_{\widehat\gamma}.$ These surfaces, will be important for our main result in section \ref{s3}.

\begin{lemma}\label{irr}
For every pair of intersecting closed geodesic $\eta$ and $\gamma$ in $\Sigma,$ where $\eta$ is simple, the embedded surface $(T_\eta)_{\widehat\gamma}$ in $M_{\widehat\gamma}$ is $ \pi_1$-injective.
\end{lemma}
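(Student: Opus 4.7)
The plan is to argue by contradiction: suppose $(T_\eta)_{\hat\gamma}$ admits a compressing disk $D_1\subset M_{\hat\gamma}$ with $\alpha:=\partial D_1$ essential in $(T_\eta)_{\hat\gamma}$, and show that this forces an impossible geometric bigon between $\gamma$ and $\eta$ in $\Sigma$.

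First I would verify that $T_\eta$ itself is incompressible in $PT^1\Sigma$. Since $\Sigma$ and $PT^1\Sigma$ are orientable and $\eta$ is two-sided, the circle bundle is trivial over $\eta$, so $T_\eta\cong \eta\times\mathbb{S}^1$ and $\pi_1(T_\eta)\cong\mathbb{Z}^2$ is generated by a section $s$ over $\eta$ and the fiber class $f$. In $\pi_1(PT^1\Sigma)$ the fiber $f$ is central of infinite order, and the projection $\pi_*\colon \pi_1(PT^1\Sigma)\to\pi_1(\Sigma)$ kills $f$ and sends $s$ to $[\eta]$, which has infinite order because $\eta$ is essential. Hence $s^a f^b=1$ forces $a=b=0$, so $\pi_1(T_\eta)\hookrightarrow\pi_1(PT^1\Sigma)$ is injective and the Loop Theorem gives incompressibility. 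Consequently $\alpha$ bounds a disk $D_0\subset T_\eta$, and since $\alpha$ is essential in $(T_\eta)_{\hat\gamma}$, $D_0$ must contain at least one point of $\hat\gamma\cap T_\eta$. By irreducibility of $PT^1\Sigma$, the sphere $D_0\cup D_1$ bounds a $3$-ball $B$.

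The heart of the proof is then to extract a surface bigon from $B$. Because $D_1$ avoids $\hat\gamma$, the intersection $\hat\gamma\cap B$ is a non-empty collection of properly embedded arcs with endpoints on $D_0$; choose one such arc $\alpha'$ with endpoints $\hat p,\hat q$. Since $B$ is a $3$-ball, $\alpha'$ is isotopic rel endpoints to an arc $\beta\subset D_0\subset T_\eta$, and projecting the isotopy via $\pi\colon PT^1\Sigma\to\Sigma$ produces a homotopy rel endpoints inside $\Sigma$ between the geodesic sub-arc $\pi(\alpha')\subset\gamma$, from $p:=\pi(\hat p)$ to $q:=\pi(\hat q)$, and the path $\pi(\beta)\subset\eta$.

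To close the argument I would pass to the universal cover $\mathbb{H}^2$ and fix a lift $\tilde p$ of $p$: the lift of $\pi(\alpha')$ traces a segment of a single lift $\tilde\gamma$ of $\gamma$ and ends at a lift $\tilde q$ of $q$, while by \emph{simplicity} of $\eta$ its pre-image is a disjoint union of geodesic lines, so the lift of $\pi(\beta)$ starting at $\tilde p$ stays on one such line $\tilde\eta$; the null-homotopy rel endpoints forces both lifts to terminate at the same $\tilde q$. As $\pi(\alpha')$ has positive length, $\tilde p\neq\tilde q$, so $\tilde\gamma$ and $\tilde\eta$ share two distinct points of $\mathbb{H}^2$ and must coincide; hence $\gamma$ and $\eta$ have the same underlying primitive geodesic and $i(\gamma,\eta)=0$, contradicting the hypothesis built into $(T_\eta)_{\hat\gamma}$. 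The main obstacle is cleanly descending the three-dimensional isotopy to a genuine base-surface bigon; the simplicity of $\eta$ is exactly what pins the lifted path to a single geodesic line in $\mathbb{H}^2$ and allows the geodesic-uniqueness step to fire.
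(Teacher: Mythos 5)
Your proposal is correct and follows essentially the same route as the paper: reduce to a compressing disk via the Loop Theorem, use incompressibility of $T_\eta$ in $PT^1\Sigma$ to cap it off inside the torus, invoke irreducibility to get a ball, and extract an arc of $\hat\gamma$ in that ball whose projection yields a forbidden bigon between the geodesics $\gamma$ and $\eta$ (the paper phrases the final contradiction as "reducing $i(\gamma,\eta)$, violating minimal position," while you make the same point explicit in $\mathbb{H}^2$; you also prove incompressibility of $T_\eta$ directly where the paper cites \cite{FH13}). The only slip is the word "isotopic" for the arc $\alpha'$ in the ball $B$ -- a knotted arc need not be isotopic rel endpoints into $\partial B$, but it is homotopic rel endpoints, which is all your argument uses.
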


\begin{proof}[\bf{Proof}]
 We consider, for the sake of concreteness, the case where the surface $(T_\eta)_{\widehat\gamma}$ is separating and leave the non separating case to the reader. Let us split  $M_{\widehat\gamma}$ by $(T_\eta)_{\widehat\gamma},$ into pieces $N_1$ and $N_2.$ By Van Kampen it  is enough to show the $\pi_1$-injectivity for the surface $(T_\eta)_{\widehat\gamma}$ in $N_i.$
\vskip .2cm
If the surface  $(T_\eta)_{\widehat\gamma}$ is not $\pi_1$-injective in $N_i,$ then by the Loop Theorem, there is a disk $D_0$ whose interior is in the interior of $N_i$ and whose innermost intersection with $(T_\eta)_{\widehat\gamma}$ is an essential simple closed curve $\alpha$ in $(T_\eta)_{\widehat\gamma}.$ 
\vskip .2cm
As $T_\eta$ is an incompressible surface in $PT^1\Sigma$ (\cite{FH13}, Theorem B.8),  then  $\alpha$ bounds a disk $D_1$ in $T_\eta$ that intersects ${\widehat\gamma}.$  So by the irreducibility of $PT^1\Sigma,$ the embedded sphere formed by $D_0\cup D_1$  would bound a ball $B$ in $PT^1\Sigma.$
\vskip .2cm
By the periodicity of $\widehat\gamma,$ there is at least one arc of $\widehat\gamma$ inside $B$ with endpoints at $D_1.$  This arc is homotopic relative to the boundary to an arc in $B$ that union  with a simple arc in $D_1$ with the same endpoints, bounds a disk in $N_i.$ Such homotopy inside $PT^1\Sigma,$ induces a homotopy on $\Sigma$ that reduces the intersection number between $\gamma$ and $\eta,$ contradicting the fact that $\gamma$ and $\eta$ are in minimal position. \end{proof}

\section[Sequences of geodesics with bounded volume complement] {Sequences of closed geodesics with uniformly bounded volume complement}\label{s2}

In this section we prove one of our main results, Theorem \ref{2}, which provides a way of constructing sequences of closed geodesics of increasing length, whose canonical lift complements are not homeomorphic and whose volumes are universally bounded by the volume of a link complement on $PT^1\Sigma.$ Although this kind of examples already existed in the literature (see \cite{BPS16}, Subsection 3.2  or \cite{BPS17}, Example 5.2), we point out that our method generalizes the previous examples.

\begin{reptheorem}{2}
Given a hyperbolic surface $\Sigma,$ there exist a constant $V_0>0$ and a sequence $\{\gamma_n\}$ of filling closed geodesic on $\Sigma,$ with $M_{\widehat\gamma_n}\not\cong M_{\widehat\gamma_k}$ for every $k\not= n,$ such that $\Vol(M_{\widehat\gamma_n})< V_0$ for every $n\in\mathbb{N}.$  Moreover, for any sequence $\{X_n\}$ of hyperbolic metrics on $\Sigma,$  we have that $\ell_{X_n}(\gamma_n) \nearrow \infty.$ \end{reptheorem}

\begin{proof}[\bf{Proof}]
Choose $\gamma_0$ and $\eta$ closed geodesics on $\Sigma$ such that the first one is filling, the second has a fixed orientation and $i(\gamma_0,\eta)>1.$
\vskip .2cm
For each intersection point  between $\gamma_0$ and $\eta,$ we measure the angle of intersection at that point by using the orientation of $\eta$ and the right side on $\Sigma$ with respect to $\eta.$ Let $p$ be a point of intersection of $\gamma_0$ with $\eta$ that minimizes the angle. 
\vskip .2cm
Define $\gamma_n$ the unique closed geodesic homotopic to the closed curve that starts at $p,$ travels $n$ times along $\eta$ with the same orientation and then one time about $\gamma_0,$ that is
$$\eta^n\ast_p \gamma_0  \hspace{.2cm} \mbox{(denoted as} \hspace{.2cm}\gamma_{n,0} ).$$
Notice that $\gamma_{n,0}$ is not self-transverse, so we will slightly modify it to obtain one that is, in the following manner:
\vskip .2cm
Choose a $\delta$ neighborhood of ${\eta}$  such that all self-intersections of $\gamma_0$ that lie outside the geodesic ${\eta},$ also lie outside the $\delta$ neighborhood of ${\eta}.$ 
\vskip .2cm
 Erase the geodesic arc of $\gamma_{0}$  to which $p$ belongs and is contained in the $\delta$ neighborhood of ${\eta}.$ Link the extremal points by the geodesic segment ${\eta_n}$ in the $\delta$ neighborhood of ${\eta}$ that winds $n$ times in the orientation of ${\eta}.$ Let us denote this piecewise geodesic  as $\gamma_{n,1}.$
\vskip .2cm
\begin{figure}[h]
\centering
\includegraphics[scale=0.4] {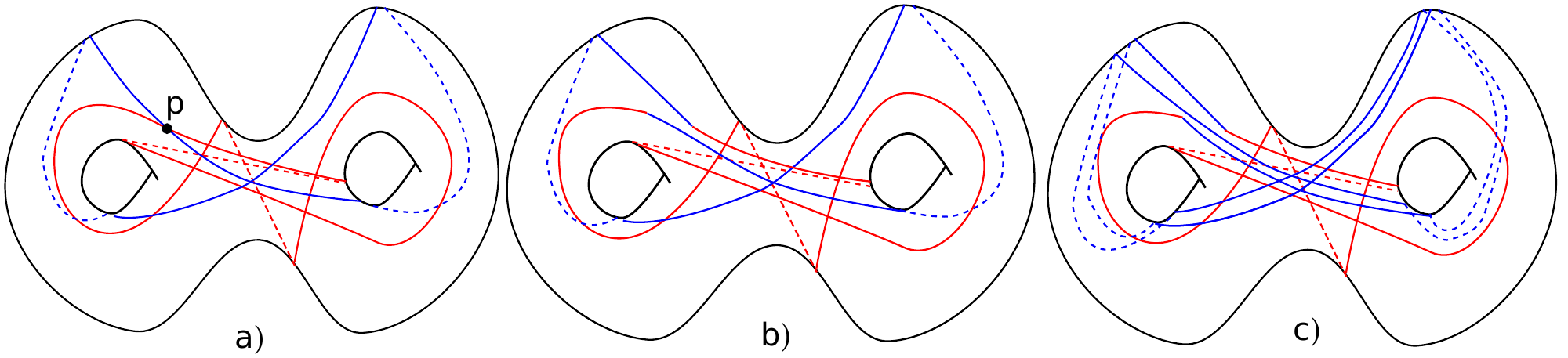}
\caption{$a)$ $\gamma_{0}\cup\eta,$ \hspace{.2cm} $b)$ $\gamma_{1,1},$  \hspace{.2cm} and \hspace{.2cm} $c)$ $\gamma_{2,1}$ on $\Sigma.$
}\label{fill+}
\end{figure}
Using elementary hyperbolic geometric, one easily shows that $\gamma_{n,1}$ has no self-tangency points, and the angle in the intersection points between $\gamma_{n,1}\setminus\eta_n$  and $\eta_n$ are less than the angles in the gluing points.
\vskip .2cm
Notice that by construction, $$n^2i(\eta,\eta)+n(i(\gamma_0,\eta)-1)= \sharp (\gamma_{n,1}\cap\gamma_{n,1}).$$

\begin{claim}\label{si}
$\gamma_{n,1}$ has the minimal number of self-intersection.
\end{claim}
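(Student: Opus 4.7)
By Theorem~\ref{2-gon}, to show that $\gamma_{n,1}$ has minimal self-intersection in its free homotopy class it is enough to verify that $\gamma_{n,1}$ does not bound any singular $1$-gon or $2$-gon on $\Sigma$. I would argue by contradiction: assume that an innermost such disk $D$ exists, and use the detailed structure of $\gamma_{n,1}$ to transform $D$ into a singular $1$-gon or $2$-gon for the geodesic multi-curve $\gamma_0\cup\eta$. This would contradict the fact that, being closed geodesics on a hyperbolic surface, both $\gamma_0$ and $\eta$ have minimal self-intersection and $\gamma_0\cup\eta$ is in minimal position pairwise, so no such disk can exist.

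The starting point is the structural decomposition coming from the construction. By the choice of $\delta$, every self-intersection of $\gamma_0$ that is not already on $\eta$ lies outside $N_\delta(\eta)$. Inside $N_\delta(\eta)$, the curve $\gamma_{n,1}$ consists of the piecewise-geodesic $\eta_n$ winding $n$ times close to $\eta$, together with the transverse strands inherited from $\gamma_0$ minus the erased arc through $p$. The elementary hyperbolic geometry mentioned in the construction guarantees that $\gamma_{n,1}$ is self-transverse and that every crossing inside $N_\delta(\eta)$ between a $\gamma_0$-strand and $\eta_n$ occurs at a strictly smaller angle than that at the two gluing points on $\partial N_\delta(\eta)$. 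I would then apply the standard $n$-fold unwinding isotopy supported in $N_\delta(\eta)$ which collapses $\eta_n$ onto $\eta$ while keeping the transverse strands essentially in place. This isotopy turns $\gamma_{n,1}$ into an immersed curve freely homotopic to $\gamma_0\cup\eta$ (with appropriate multiplicities) and carries $D$ to a candidate singular $1$-gon or $2$-gon $D'$ whose boundary is composed of arcs of $\gamma_0$ and $\eta$.

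The delicate step is verifying that $D'$ really is a singular $1$-gon or $2$-gon of $\gamma_0\cup\eta$ rather than a degenerate configuration. I would need to exclude two kinds of bad cases: first, where $\partial D$ contains long subarcs lying entirely inside $N_\delta(\eta)$ that wrap around several strands of $\eta_n$; second, where $D$ is captured entirely inside $N_\delta(\eta)$, seemingly producing a disk bounded by two adjacent strands of $\eta_n$. The strict angle ordering at $\partial N_\delta(\eta)$ is exactly what is needed to rule out such degenerations: it forces any innermost $D$ to cross $\partial N_\delta(\eta)$ transversely, so that after the unwinding isotopy its image $D'$ descends to a genuine $1$-gon or $2$-gon of $\gamma_0\cup\eta$. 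Since $\gamma_0$ and $\eta$ are closed geodesics, no such disk exists, contradicting the assumption on $D$ and proving the claim.
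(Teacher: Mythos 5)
Your reduction via Theorem~\ref{2-gon} is the right starting point, but the core of your argument --- the ``unwinding isotopy'' that collapses $\eta_n$ onto $\eta$ and transports a putative $1$-gon or $2$-gon of $\gamma_{n,1}$ to one of $\gamma_0\cup\eta$ --- has a genuine gap. First, the collapse does not produce the multicurve $\gamma_0\cup\eta$ in minimal position: it produces the degenerate curve $\gamma_{n,0}=\eta^n\ast_p\gamma_0$, which runs $n$ times over the geodesic $\eta$ with total overlap and has a tangency at $p$. ``No bigon downstairs'' for the transverse pair $\gamma_0,\eta$ therefore does not immediately forbid bigons upstairs, because the correspondence between crossings is many-to-one. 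Second, and more seriously, the dangerous bigons are exactly those that your map sends to \emph{degenerate} configurations rather than to honest bigons of $\gamma_0\cup\eta$: for instance, a disk whose two vertices are the crossings of a single $\gamma_0$-strand with two different strands of the coil $\eta_n$, with one side running through the gluing region. Under your collapse both vertices land on the same point of $\gamma_0\cap\eta$ and the disk shrinks to nothing, so no contradiction with minimality of $\gamma_0\cup\eta$ is produced. This is precisely the case where the construction could fail without the hypothesis that $p$ realizes the \emph{minimal} intersection angle between $\gamma_0$ and $\eta$; your write-up invokes an angle condition only to force transversality with $\partial N_\delta(\eta)$, which does not address this configuration.

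For comparison, the paper's proof stays in the universal cover: each lift of $\gamma_{n,1}$ is a piecewise-geodesic line, and one shows any two lifts meet at most once. Two consecutive intersections would bound a region whose sides contain gluing points of the piecewise geodesic; if there are at least two gluing points one gets a geodesic quadrilateral with angle sum $\geq 2\pi$, impossible in $\mathbb{H}^2$, and the remaining case of exactly one gluing point is excluded by the minimality of the angle at $p$. If you want to salvage your approach, you would need to (i) treat separately the bigons contained in, crossing, and disjoint from $N_\delta(\eta)$, and (ii) supply an actual argument, using the minimal-angle choice of $p$, for the one-gluing-point case --- at which point you have essentially reconstructed the paper's Gauss--Bonnet argument.
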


\begin{figure}[h]
\centering
\includegraphics[scale=0.6] {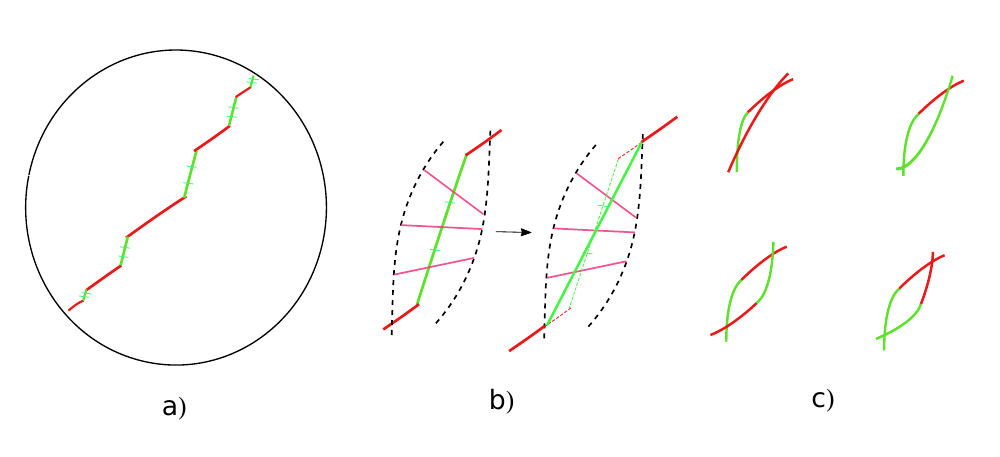}
\caption{a)A lift of $\gamma_{n,0}$ in $\mathbb{H}^2.$ \hspace{.05cm} b)The modification from $\widetilde{\gamma_{n,0}}$ to $\widetilde{\gamma_{n,1}}.$  \hspace{.05cm} c)Possible bigons given by the interesction of two lifts of  $\gamma_{n,1}.$ 
}\label{pd}
\end{figure}
\begin{proof}[Proof]By Theorem \ref{2-gone}, the proof reduces to show that $\gamma_{n,1}$ has no singular $1$-gons nor $2$-gons. Consider the lifts of $\gamma_{n,1}$ in the universal cover of $\Sigma,$  which are piecewise geodesic lines in $\mathbb{H}^2.$ It is enough to show that every pair of them intersects at most once. Indeed, if there were two consecutive intersections they cannot happen in segments of the same geodesic segment lifts, so in between there must is at least one gluing point of two distinct geodesic segments.
\vskip .2cm
 If there is more than one gluing point in between the consecutive intersections, we would have geodesic side quadrilaterals whose sum of interior angles is bigger or equal to $2\pi,$ which is a contradiction in the hyperbolic plane.
\vskip .2cm
The case were there is exactly one gluing point cannot happen thanks to the choice of $p$ as the intersection points between $\gamma_0$ and $\eta$ with minimal angle.
\end{proof}
In order to make $\gamma_{n,1}$ smooth, just round the two corners of $\gamma_{n,1},$ and denote it by the same name. This gives us a self-transverse closed curve homotopically transversal to $\gamma_n,$ which means that $M_{\widehat\gamma_n}\cong M_{\widehat{\gamma_{n,1}}}.$ By Theorem \ref{charac}, $M_{\widehat\gamma_n}\not\cong M_{\widehat\gamma_k}$ for every $k\not= n,$ because their self-intersection number, which is an invariant under the mapping class group action, is different for each element of the sequence $\{\gamma_n\}.$ Moreover, for any sequence $\{X_n\}$ of hyperbolic metrics on $\Sigma,$  we have that $\ell_{X_n}(\gamma_n) \nearrow \infty,$ because the self-intersection number is increasing too. 
\vskip .2cm
The hyperbolicity of $M_{\widehat\gamma_n}$ comes from the following fact:

\begin{claim}\label{fill}
$\gamma_n$ is filling for every $n\in \mathbb{N}.$\end{claim}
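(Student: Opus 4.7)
The plan is to reduce to showing that $\gamma_{n,1}$ fills $\Sigma$. Since $\gamma_{n,1}$ has minimal self-intersection by Claim \ref{si} and is homotopic to the geodesic $\gamma_n$, the Hass--Scott result recalled in Section \ref{th} implies that $\gamma_{n,1}$ and $\gamma_n$ are transversally homotopic, and Corollary \ref{comp} then gives that $\gamma_n$ fills if and only if $\gamma_{n,1}$ does.

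To prove that $\gamma_{n,1}$ fills, I would argue by contradiction: suppose some essential simple closed curve $\beta$ on $\Sigma$ is disjoint (up to isotopy) from $\gamma_{n,1}$. The key observation is that outside the annular neighborhood $N_\delta$ of $\eta$, the curves $\gamma_{n,1}$ and $\gamma_0$ coincide, so any representative of $\beta$ disjoint from $\gamma_{n,1}$ is, on $\Sigma\setminus N_\delta$, also disjoint from $\gamma_0$. I would then split on $i(\beta,\eta)$. If $i(\beta,\eta)=0$, the geodesic representative of $\beta$ is disjoint from $\eta$ and hence from $N_\delta$ for $\delta$ sufficiently small; it is therefore disjoint from $\gamma_0$ on all of $\Sigma$, contradicting the fact that $\gamma_0$ fills. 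If $i(\beta,\eta)>0$, every representative of $\beta$ must cross $N_\delta$ through essential arcs joining its two boundary components. Inside $N_\delta$, the set $\gamma_{n,1}\cap N_\delta$ contains the spiral $\eta_n$ (winding $n\geq 1$ times around the core $\eta$) and, because $i(\gamma_0,\eta)>1$ by the choice of $\gamma_0$ and $\eta$, at least one short transverse arc of $\gamma_0\cap N_\delta$ of winding $0$. I would then run a winding-number comparison in the universal cover of $N_\delta\cong\mathbb{S}^1\times[0,1]$: disjointness of an essential arc of $\beta\cap N_\delta$ from the spiral forces its winding class in $\pi_1(N_\delta)\cong\mathbb{Z}$ to be close to $n$, while disjointness from a short arc forces that winding to be close to $0$; these two constraints are incompatible, producing the required contradiction.

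The hardest part will be this winding comparison, in particular verifying the ``close to $n$'' and ``close to $0$'' statements precisely enough to rule out small values of $n$ as well; the hypothesis $i(\gamma_0,\eta)>1$ is essential here, as it is precisely what guarantees the existence of the short arcs needed to constrain the winding from below.
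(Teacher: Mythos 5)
Your reduction to $\gamma_{n,1}$ via Claim \ref{si} and Corollary \ref{comp} is exactly the paper's, but your core argument that $\gamma_{n,1}$ fills is a different route (contradiction via a test curve $\beta$ and winding numbers, where the paper instead checks complementary regions directly: $\Sigma\setminus\gamma_{n,1}$ is a subdivision of $\Sigma\setminus\gamma_{1,1}$, and $\Sigma\setminus\gamma_{1,1}$ is obtained from the disk and once-punctured-disk regions of $\Sigma\setminus(\gamma_0\cup\eta)$ by cutting the four edges at $p$ and regluing, which keeps every region a disk), and as written it has a genuine gap in each case. In the case $i(\beta,\eta)=0$ you switch representatives mid-argument: the curve disjoint from $\gamma_{n,1}$ (hence from $\gamma_0$ outside $N_\delta$) is the given $\beta$, while the curve missing $N_\delta$ is the geodesic representative of $\beta$; neither is shown to be disjoint from all of $\gamma_0$. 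Moreover $\delta$ was fixed when $\gamma_{n,1}$ was built, so it cannot be shrunk after $\beta$ is chosen. What this case actually requires is that the given $\beta$ misses, or can be pushed off within the complement of $\gamma_{n,1}$, the single erased arc of $\gamma_0$ through $p$ --- the only part of $\gamma_0$ that $\beta$ is still allowed to meet --- and that is not addressed.

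In the case $i(\beta,\eta)>0$, the disjoint-arcs-in-an-annulus comparison gives, for an essential arc $b$ of $\beta\cap N_\delta$, bounds of the form $|w(b)-n|<1$ and $|w(b)|<1$, which are contradictory only for $n\geq 2$; the claim is asserted for every $n\in\mathbb{N}$ and you acknowledge without resolving the small-$n$ problem. You also need to dispose of arcs of $\beta\cap N_\delta$ that cross $\eta$ but return to the same boundary component of $N_\delta$, and the isotopy removing them must stay in the complement of $\gamma_{n,1}$. (This case can in fact be eliminated for all $n\geq 1$: since $i(\gamma_0,\eta)>1$ at least one transversal arc of $\gamma_0\cap N_\delta$ other than the erased one survives in $\gamma_{n,1}$, and the spiral $\eta_n$ crosses it, so the graph $\gamma_{n,1}$ contains an embedded cycle freely homotopic to $\eta$; any $\beta$ disjoint from $\gamma_{n,1}$ then has $i(\beta,\eta)=0$. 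But that only shifts all the weight onto the unrepaired first case.) As it stands the proposal does not close the argument; the paper's partition-subdivision argument avoids both issues.
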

\begin{figure}[h]
\centering
\includegraphics[scale=0.5] {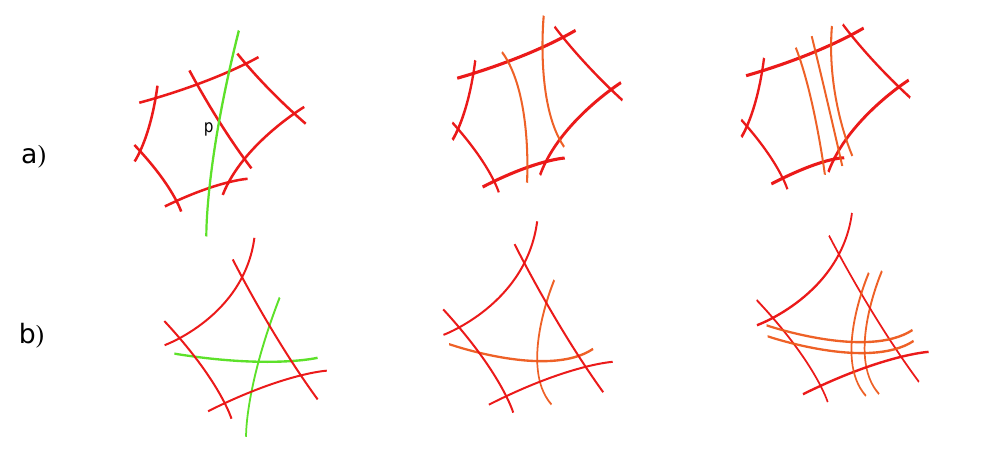}
\caption{The images on the left are two examples of partitions of $\Sigma$ via $\eta\cup\gamma_0$, on the center $\gamma_1$, and on right for $\gamma_2.$ \hspace{.2cm} a) The partition around $p.$ \hspace{.2cm} b)The partition where the arcs does not contain $p.$ 
}\label{pd}
\end{figure}
\begin{proof}[Proof] 
Take the partition of $\Sigma$ by  $\gamma_0$ and draw $\eta$ over this partition. First we will be interested in comparing the partitions between $\Sigma\setminus\gamma_0$ and $\Sigma\setminus\gamma_{1,1}.$
\vskip .2cm
Consider the four sides of the graph induced by $\gamma_0\cup\eta$ that have $p$ as vertex and cut them at $p$ and glue them in a particular order that depends on the orientation of $\eta.$ Next, superpose the pair of new arcs to geodesic segments on $\eta_1.$
\vskip .2cm
In the case were the boundary discs of the partition of  $\Sigma$ by $\gamma_0$  does not contain $p$ we simply consider the partition of the discs given by $\gamma_0\cup\eta_1.$
\vskip .2cm
Thus if $\gamma_0$ is filling then $\gamma_{1,1}$ is also filling. Moreover, as this property of being filling is preserved by transversal homotopy (Corollary \ref{comp}), implies that $\gamma_1$ is also filling.
\vskip .2cm
Notice that, $\Sigma\setminus\gamma_{n,1}$ is a subdivision of the partition given by $\gamma_{1,1}$, then $\gamma_n$ is also filling.
\end{proof}

Finally, in order to find an upper bound to the volume of all the hyperbolic $3$-manifolds $M_{\widehat\gamma_{n}}$ we will construct $L_0$ and $L_1,$ two knots inside $M_{\widehat\gamma_{0}}$ and show that each $M_{\widehat\gamma_{n}}$ is obtained by making Dehn surgery along $L_0$ and $L_1.$ As Dehn filling does not increase the simplicial volume (\cite{Thu79} Theorem 6.5.4), then\string:
$$\Vol(M_{\widehat\gamma_{n}})\leq \| M_{\widehat\gamma_{{0}}}\setminus\{L_0,L_1\}\| ,\hspace{.2cm}\mbox{for every}  \hspace{.2cm}k\in \mathbb{N}.$$

For the construction of $L_0$ and $L_1,$ let $\theta$ be the angle of intersection of $\gamma_{0}$ and $\eta$ at $p.$  As the number of possible angles in the intersection of $\gamma_{0}$ and $\eta$ is finite, there exist a small $\varepsilon$ such that the following annulus:
$$A:=\{ (\eta (t),\dot\eta(t) e^{2i\pi s}) \hspace{.2cm} | \hspace{.2cm} t\in\mathbb{S}^1,\hspace{.2cm} s\in[0,\theta+ \varepsilon] \}\subset PT^1\Sigma,$$
is intersected only once by $\widehat\gamma_{0}$ at the point whose projection on $\Sigma$  is $p.$ Denote the boundary component of  $A$ associated to $\widehat\eta$ as $L_0$ and let $L_1$ be the other boundary component of $A.$ 
\vskip .2cm
Take $V$ a solid torus inside the preimage the $\delta $ neighborhood of $\eta$ under the map $\Sigma\rightarrow PT^1\Sigma$ which contains $A$ and does not intersect other arcs of $\widehat\gamma_{0}$ different from the one that intersects $A.$ 
\vskip .2cm
Let $V'$  be the resulting $3$-manifold obtained by performing a $\frac{1}{n}$ Dehn surgery on one component of $\partial A$ and  a $\frac{-1}{n}$  Dehn surgery on the other component of $\partial A$ inside $V.$ By (\cite{Os06}, Theorem 2.1) there exist a diffeomorphism $\phi_n:V'\rightarrow V$ such that $\phi_n\mid_{\partial V'}=id.$ Moreover, each homotopy class relative to the boundary of an arc transversal to the annulus is send to the homotopy class of the initial arc concatenated, at the point of intersection with the annulus, with an arc that winds along the core of $V$ $n$ times.
\vskip .2cm
In conclusion, the diffeomorphism $\phi_n$ extends to a self-homeomorphism of $PT^1\Sigma\setminus\{L_0, L_1\}$ that sends the isotopy class of $\widehat\gamma_{0}$ to $\widehat\gamma_{n}.$
\end{proof}
\begin{note*}From Theorem \ref{2}, it is natural to look for bounds on the volume of $M_{\widehat{\gamma}}$ invariant under the mapping class group action and more intrinsic than the self-intersection number. 
\end{note*}
We can also apply our method in Theorem \ref{2} to explain why the volume of the sequence of geodesics found in (Example 5.2, \cite{BPS17}) is bounded. We recall that every closed geodesics on the modular surface is represented as a positive word in $x=\begin{pmatrix} 
1 & 1 \\
0 & 1 
\end{pmatrix}$ 
and $y=\begin{pmatrix} 
1 & 0 \\
1 & 1 
\end{pmatrix},$ containing both symbols. Conversely, any such word encodes a unique periodic geodesic \cite{Ser85}. In that case, we refer to the code for a closed geodesic $\gamma$ as $\omega(\gamma)$ and denote $n_{\gamma}$ the number of (cyclic) subwords of the form $xy$ in $\omega(\gamma).$ This coding arises from a continued fraction expansion (see \cite{Ser85}) where $n_{\gamma}$ is exactly half the period of the (even) continued fraction corresponding to $\gamma.$

\begin{repcorollary}{mod1}
For the modular surface $\Sigma_{mod},$ there exist a constant $V_0>0$ and a sequence $\{\gamma_k\}$ of filling closed geodesics on $\Sigma_{mod},$  with $M_{\widehat\gamma_n}\not\cong M_{\widehat\gamma_k}$ for every $k\not= n,$ such that $\Vol(M_{\widehat\gamma_k})< V_0$ for every $k\in\mathbb{N}$ and the period of the continued fraction expansion of $\gamma_k$ tends to infinity.
\end{repcorollary}

\begin{proof}[\bf{Proof}]
Consider the closed geodesics $\gamma_k$ whose code is of the form $\{x^2y\alpha^k\},$ where $\alpha$ is any positive word containing $x$ and $y$ that ends with $y,$ whose representing geodesic intersects the geodesic encoded by $x^2y$ more than once.  Proceeding as in Theorem \ref{2} one shows that the volume associated to $x^2y\alpha^k$ is bounded by the volume of $$PT^1\Sigma_{mod}\setminus (\widehat{x^2y} \cup \widehat{\alpha} \cup \widehat{\alpha_{\theta}}),$$
where $\widehat{\alpha_{\theta}}$ is a knot obtained by translating $\widehat\alpha$ by some small angle $\theta$ in the fiber direction of $PT^1\Sigma_{mod}.$
\end{proof}

\begin{note}\label{constant-homotopyarcs}
For the sequence of geodesics in Theorem \ref{2}, we observe that if we fix a suitable pants decomposition on $\Sigma,$ then the homotopy classes of arcs in each pair of pants is bounded.  The only thing that changes is the number of arcs in the fixed homotopy classes.
\end{note}

\begin{figure}[h]
\centering
\includegraphics[scale=0.4] {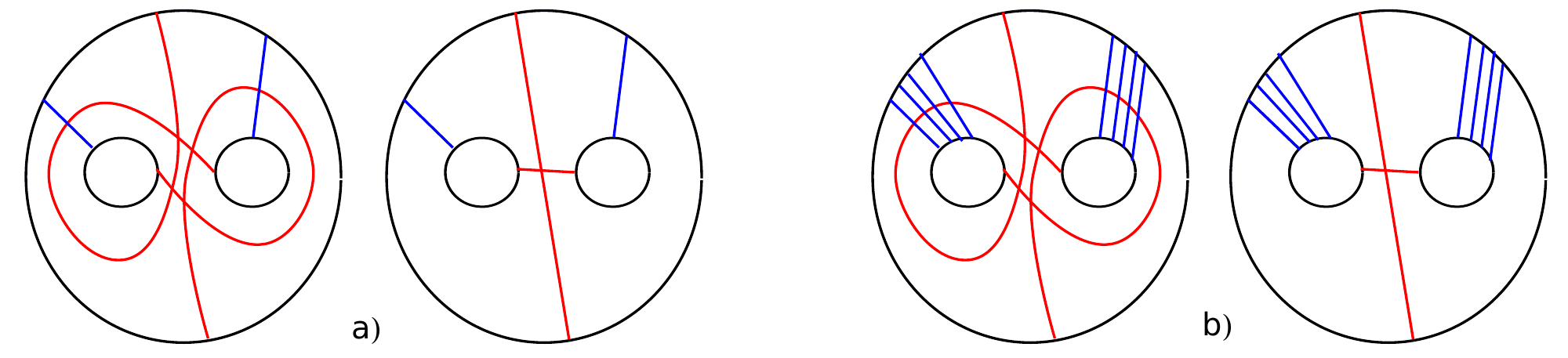}
\caption{$a)$ After cutting $\Sigma$ (see Figure \ref{fill+}b) along a pants decomposition and taking a minimal position homotopy class of the remaining  $\gamma_{1,1}$-arcs relative to the pants boundary, and \hspace{.2cm} $b)$ the corresponding operation for $\gamma_{4,1}$ in $\Sigma.$ 
}\label{piste}
\end{figure}

This remark is the motivation for our next main result, that we present in the following section.

\section{Lower bound on the volume of $M_{\widehat\gamma}$}\label{s3}

In this section we prove a lower bound for the volume of the canonical lift complement  (Theorem \ref{1}). The lower bound is obtained in terms of combinatorial data coming from the geodesic and a pants decomposition of the surface. We apply this result to find a sequence of geodesics in the modular surface where the lower bound can be written in terms of the period of the continued fraction extension of the geodesics (Corollary \ref{mod}). We conclude by finding a sequence of geodesics on surfaces, whose lower bound is written in terms of their length (Theorem \ref{pi-b-2}).

\begin{reptheorem}{1}
Given a pants decomposition $\Pi$ on a hyperbolic surface $\Sigma,$  and $\gamma$ a filling geodesic, we have that\string:
 $$\Vol(M_{\widehat\gamma})\geq \frac{v_3}{2}\sum_{P \in \Pi}(\sharp\{\mbox{homotopy classes of} \hspace{.2cm}  \gamma\mbox{-arcs in} \hspace{.2cm} P\}-3),$$ 
  where $v_3$ is the volume of a regular ideal tetrahedra.
\end{reptheorem}

Given a pair of pants $P,$ we say that two arcs $\alpha,\beta:[0,1] \rightarrow P$ with $\alpha(\{0,1\})\cup \beta(\{0,1\})\subset \partial P$ are in the same homotopy class in $P,$ if there exist an homotopy $\fun{h}{[0,1]_1\times[0,1]_2}{P}$ such that\string: $$h_0(t_2)=\alpha(t_2),  \hspace{.2cm} h_1(t_2)=\beta(t_2) \hspace{.2cm} \mbox{and} \hspace{.2cm}h([0,1]_1\times\{0,1\})\subset \partial P.$$

 \begin{note}\label{bda} 
Up to isotopy, for a family of simple arcs without intersection there are only six configurations of arcs in $P$. These are shown in Figure \ref{badarcs}. The $3$ in the lower bound of Theorem \ref{1} comes from the fact that such configurations have at most $3$ homotopy classes of $\gamma$-arcs on $P.$
 \begin{figure}[h]
\centering
\includegraphics[scale=0.3] {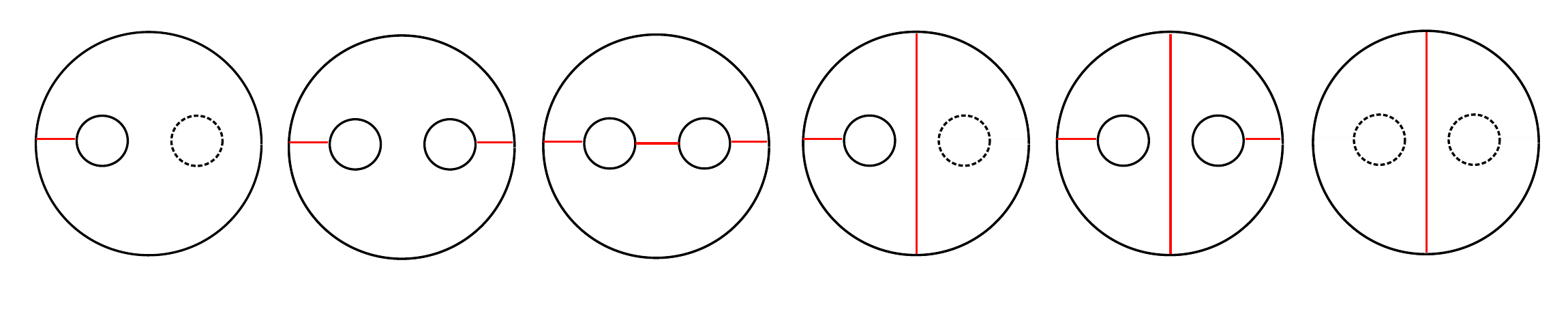}
\caption{ The only six $\gamma$-arcs configuration on $P$ up to homotopy classes whose $\gamma$-arcs are simple arcs without intersections.
}\label{badarcs}
\end{figure} 
\end{note}

Before stating the main result to prove Theorem \ref{1}  we recall some definitions.
\vskip .2cm
If $N$ is a hyperbolic $3$-manifold and $S \subset N$ is an embedded incompressible surface, we will use $N\backslash\backslash S$ to denote the manifold that is obtained by cutting along $S;$ it is homeomorphic to the complement in $N$ of an open regular neighborhood of $S.$ If one takes two copies of $N\backslash\backslash S,$ and glues them along their boundary by using the identity diffeomorphism, one obtains the double of $N\backslash\backslash S,$ which is denoted by $D(N\backslash\backslash S).$

\begin{definition}\label{DP}  \normalfont 
Let $P$ be a pair of pants which belongs to a pants decomposition on a surface $\Sigma$ and $\gamma$ a closed geodesic having points in $\Sigma\setminus P$ such that $P\cap\gamma$ is  finite set of geodesic arcs $\{\alpha_i\}$ connecting some boundary components of $P.$ Then denote,
$$P_{\widehat\gamma} \hspace{.2cm} \mbox{as} \hspace{.2cm} PT^1P\setminus\bigcup_i \widehat\alpha_i\cong (\mathbb{S}^1\times P)\setminus\bigcup_i \widehat\alpha_i.$$
And define,
$$D({P}_{\widehat\gamma}),$$ as gluing two copies of $PT^1P\setminus\bigcup_i \widehat\alpha_i$ along the punctured tori coming from $$\partial PT^1P\setminus\big(\partial PT^1P\cap\big(\bigcup_i  \widehat\alpha_i\big)\big), $$
by using the identity. Moreover, $D({P}_{\widehat\gamma})$ is homeomorphic to
$$(\mathbb{S}^1\times  S^0)\setminus \bigcup_i  D(\widehat\alpha_i),$$
where $S^0$ is a surface of genus two (if $\sharp(\partial\Sigma\cap \partial P)=0)$, or a surface of type $(1,2)$ (if $\sharp(\partial\Sigma\cap \partial P)=2)$ or a surface of type $(0,4)$ (if $\sharp(\partial\Sigma\cap \partial P)=1),$ and $D(\widehat\alpha_i)$ is an embedded closed curve on $\mathbb{S}^1\times  S^0$ obtained by gluing $\widehat\alpha_i$ along the two points $\partial PT^1P\cap\widehat\alpha_i$ by the identity.
\end{definition}

\begin{figure}[h]
\centering
\includegraphics[scale=0.4] {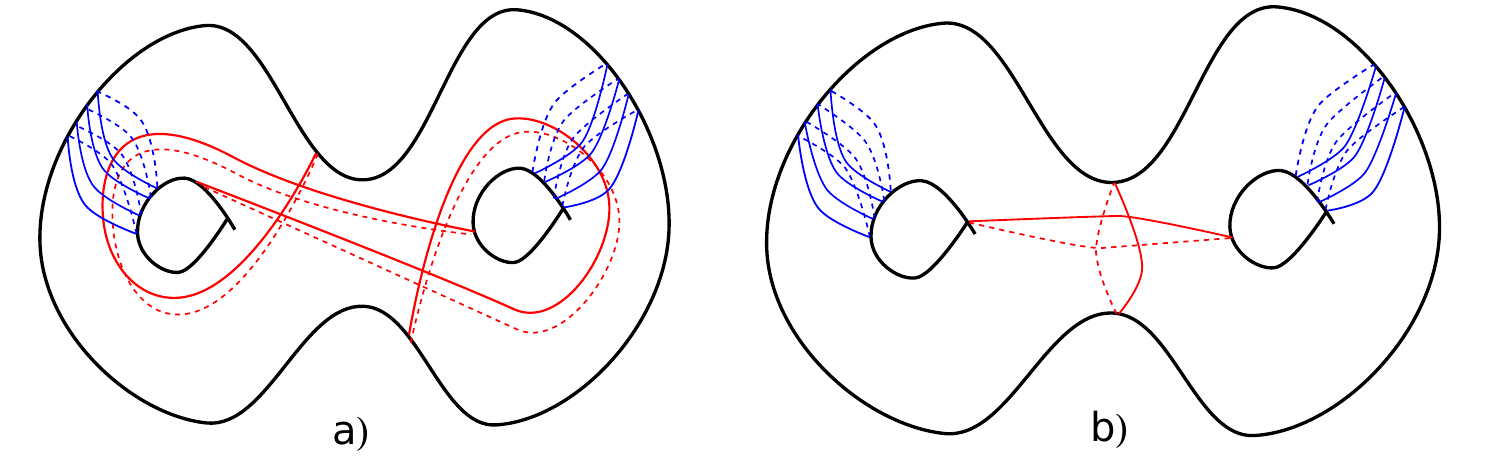}
\caption{The projection of  $D({P}_{\widehat{\gamma_{4,1}}})$ (after an homotopy of $\gamma_{4,1}$-arcs to a minimal position configuration) over $ S^0$ for Figure \ref{piste}b where: $a)$  is the right pair of pants and \hspace{.2cm} $b)$ is the left pair of pants.
}\label{DPg}
\end{figure}

Let $M$ be a connected, orientable 3-manifold with boundary and let $S(M;\mathbb{R})$ be the \textit{singular chain complex} of $M.$ More concretely, $S_k(M;\mathbb{R})$ is the set of formal linear combination of $k$-simplices, and we set as usual $S_k(M, \partial M;\mathbb{R})= S_k(M;\mathbb{R})/S_k(\partial M;\mathbb{R})$. We denote by 
$\|c \|$ the $l_1$-norm of  the $k$-chain $c$.
If $\alpha$ is a homology class in $H^{sing}_k(M, \partial M;\mathbb{R})$, the \textit{Gromov norm of $\alpha$}  is defined as:
$$\|\alpha\|=\inf_{[c]=\alpha} \{ \|c \|=\sum_{\sigma}|r_\sigma|\hspace{.1cm}\mbox{such that}\hspace{.1cm} c=\sum_{\sigma}r_\sigma \sigma\}.$$
The \textit{simplicial volume} of $M$ is the Gromov norm of the fundamental class of $(M,\partial M)$ in  $H^{sing}_3(M,\partial M;\mathbb{R})$ and is denoted by $\|M\|.$
\vskip .2cm
The key ingredient to prove Theorem \ref{1} is the following result due to Agol, Storm and Thurston (\cite{AST07}, Theorem 9.1)\string:

 \begin{theorem*}[Agol-Storm-Thurston]
 Let $N$ be a compact manifold with interior a hyperbolic $3$-manifold of finite volume. Let $S$ be an embedded incompressible surface in $N.$ Then
 $$\Vol(N)\geq  \frac{v_3}{2} \| D(N\backslash\backslash S)\|.$$
  \end{theorem*}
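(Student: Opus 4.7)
The plan is to apply the Agol--Storm--Thurston theorem to an incompressible surface built from the pants decomposition, then reduce the simplicial-volume bound to a per-pants count of arc classes.

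\textit{Step 1: the incompressible surface.} Let $\pi \colon PT^1\Sigma \to \Sigma$ be the projection and let $\partial\Pi := \bigcup_i \partial P_i$ be the multicurve of pants curves. For each pants curve $\eta$, the torus $T_\eta := \pi^{-1}(\eta)$ meets $\hat\gamma$ transversely in $i(\eta,\gamma)>0$ points (the filling hypothesis forces $i(\eta,\gamma)>0$), yielding a properly embedded punctured torus $(T_\eta)_{\hat\gamma} \subset M_{\hat\gamma}$. Set $S := \bigsqcup_\eta (T_\eta)_{\hat\gamma}$. By \lemref{irr}, each component of $S$ is $\pi_1$-injective, so $S$ is an incompressible essential surface in $M_{\hat\gamma}$. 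Agol--Storm--Thurston then gives
$$Vol(M_{\hat\gamma}) \;\geq\; \tfrac{v_3}{2}\,\|D(M_{\hat\gamma}\setminus\setminus S)\|.$$

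\textit{Step 2: split along $S$.} Cutting $M_{\hat\gamma}$ along $S$ produces exactly one piece per pair of pants, namely $N_i := \pi^{-1}(P_i)\setminus\mathcal{N}(\hat\gamma)$. Since simplicial volume is additive over connected components and doubling respects disjoint unions,
$$\|D(M_{\hat\gamma}\setminus\setminus S)\| \;=\; \sum_{i=1}^{-\chi(\Sigma)} \|D(N_i)\|.$$
Thus the theorem reduces to the per-pants inequality
$$\|D(N_i)\| \;\geq\; \sharp\{\text{homotopy classes of } \gamma\text{-arcs in } P_i\}. \qquad (\ast)$$

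\textit{Step 3: prove $(\ast)$.} The plan is to subdivide $N_i$ further by incompressible annuli separating the arc classes. On each $P_i$, choose a disjoint family of properly embedded simple arcs $\{\beta_{i,k}\}$, disjoint from $\gamma\cap P_i$, whose complement in $P_i$ is a disjoint union of $N_i$ subsurfaces $R_{i,j}$ with the property that every $R_{i,j}$ contains the $\gamma$-arcs of exactly one homotopy class. The preimages $A_{i,k} := \pi^{-1}(\beta_{i,k})\setminus\mathcal{N}(\hat\gamma)$ are properly embedded annuli in $N_i$ which, by an argument parallel to the proof of \lemref{irr}, are $\pi_1$-injective. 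Cutting $N_i$ along $\bigcup_k A_{i,k}$ separates it into pieces $Q_{i,j}$, one per arc class $j$. Each double $D(Q_{i,j})$ is irreducible (a sphere would lift to $PT^1\Sigma$), has infinite $\pi_1$, torus boundary, and is atoroidal by verifying that every essential torus would either descend to a simple closed curve on the doubled subsurface avoiding $\gamma$ (contradicting the filling property after the subdivision) or force a homotopy between two $\gamma$-arcs in different classes. Thurston's Hyperbolization Theorem then produces a complete finite-volume hyperbolic structure on each $D(Q_{i,j})$, and Gromov--Thurston proportionality combined with the positive universal lower bound for the volume of a cusped hyperbolic $3$-manifold yields $\|D(Q_{i,j})\|\geq 1$. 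Superadditivity of simplicial volume across the cutting surfaces $A_{i,k}$ then gives $\|D(N_i)\|\geq\sum_j \|D(Q_{i,j})\|\geq N_i$, establishing $(\ast)$.

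\textit{Main obstacle.} The delicate step is verifying atoroidality of each $D(Q_{i,j})$ so that hyperbolization applies: this is subtle when a class consists of arcs returning to the same boundary component of $P_i$, as the separating arcs $\beta_{i,k}$ must be chosen to avoid creating Seifert-fibered pieces (of zero simplicial volume). Checking incompressibility of the annuli $A_{i,k}$ inside $N_i$, and carefully managing how doubled boundary annuli close up to tori so that the simplicial-volume additivity/superadditivity statements apply, will require the bulk of the technical work.
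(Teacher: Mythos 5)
Your proposal does not prove the statement in question. The statement is the Agol--Storm--Thurston theorem itself (Theorem 9.1 of \cite{AST07}): for a compact $N$ with hyperbolic interior of finite volume and an embedded incompressible surface $S\subset N$, one has $Vol(N)\geq \frac{v_3}{2}\|D(N\setminus\setminus S)\|$. The paper does not prove this; it cites it as the key external input. Your Step 1 literally invokes ``Agol--Storm--Thurston then gives $Vol(M_{\hat\gamma})\geq \frac{v_3}{2}\|D(M_{\hat\gamma}\setminus\setminus S)\|$,'' so as a proof of the stated theorem your argument is circular: you assume exactly what is to be shown. What you have actually outlined is a proof of the paper's Theorem \ref{1} (the arc-class lower bound for $Vol(M_{\hat\gamma})$), with the ASC inequality taken as a black box. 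A genuine proof of the ASC theorem is of a completely different nature and none of its ingredients appear in your outline: it requires realizing $S$ (or its boundary) by least-area/minimal surfaces, controlling the geometry of the cut manifold with minimal-surface boundary, and a monotonicity/volume comparison argument (via Ricci flow with surgery in \cite{AST07}) to compare $Vol(N)$ with the hyperbolic volume of the double, which is then converted to simplicial volume by Gromov--Thurston.

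Even judged as a proof of Theorem \ref{1}, your route differs from the paper's and has soft spots exactly where you flag them. The paper does not subdivide each pants piece by annuli; instead it bounds $\|D((P_i)_{\hat\gamma})\|$ from below by the number of cusps of the hyperbolic (atoroidal) piece $D((P_i)_{\hat\gamma})^{hyp}$ of the JSJ decomposition, using $v_3\|D((P_i)_{\hat\gamma})\|=v_3\|D((P_i)_{\hat\gamma})^{hyp}\|\geq Vol\bigl(D((P_i)_{\hat\gamma})^{hyp}\bigr)\geq v_3\,\sharp\{\mbox{cusps}\}$ (Gromov and Adams), and then injects homotopy classes of $\gamma$-arcs into the set of cusps (Lemma \ref{ch}), thereby sidestepping your ``main obstacle'': there is no need to verify atoroidality of hand-built pieces, because any arcs landing in a Seifert-fibered JSJ component would be isotopic, contradicting distinctness of their classes. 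In your scheme, both the existence of the arc-sorting family $\{\beta_{i,k}\}$ disjoint from $\gamma$ (arcs of different classes can be ``parallel-nested'' and not separable by disjoint embedded arcs missing $\gamma$) and the claimed atoroidality of each $D(Q_{i,j})$ are unsubstantiated, so even for Theorem \ref{1} the argument as written has gaps that the paper's JSJ-plus-cusp-count argument avoids.
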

 
 We now prove the lower bound for the volume of the canonical lift complement\string:
 \begin{proof}[\bf{Proof of Theorem \ref{1}}]
Let $\Sigma$ be a hyperbolic surface of genus $g$ and $n$ boundary components and let $\{\eta_i\}^{3g+n-3}_{i=1}$ be the simple closed geodesics inducing $\Pi$ a pants decomposition on $\Sigma.$ Consider the  incompressible surface  $S:= \bigsqcup^{3g+n-3}_{i=1} {(T_{\eta_i})_{\widehat\gamma}}$ in $M_{\widehat\gamma}$ (Lemma \ref{irr}). From (\cite{AST07}, Theorem 9.1) we deduce that\string:
$$\Vol(M_{\widehat\gamma}) \geq\frac{v_3}{2} \| D(M_{\widehat\gamma}\backslash\backslash S)\|= \frac{v_3}{2}\sum_{P \in \Pi} \| D({P}_{\widehat\gamma})\|.$$
For each pair of pants $P$ we have\string:
$$v_3\sharp\{\mbox{cusps of}  \hspace{.2cm} D({P}_{\widehat\gamma})^{hyp}\} \leq  \Vol( D({P}_{\widehat\gamma})^{hyp})\leq v_3\| D({P}_{\widehat\gamma})^{hyp}\| = v_3\| D({P}_{\bar\gamma})\|$$
 where $D({P}_{\bar\gamma})^{hyp}$ is the atoroidal piece of $D({P}_{\widehat\gamma}),$   i.e., the complement of the characteristic sub-manifold, with respect to its JSJ-decomposition. The first and second inequality come from \cite{Ada88}  and \cite{Gro82} respectively. 
 \vskip .2cm
 Notice that if $\omega_1$ and $\omega_2$ are a pair of homotopic $\gamma$-arcs on $P$ then their respective canonical lifts  $\widehat\omega_1$ and $\widehat\omega_2$ are isotopic $\widehat\gamma$-arcs in $PT^1P.$ Indeed, let $\widetilde{\omega_1}$ and  $\widetilde{\omega_2}$ be lifts on the universal cover starting in the same fundamental domain. Take an homotopy of geodesics that varies from  $\widetilde{\omega_1}$ to $\widetilde{\omega_2}$ and project this homotopy to $P.$ This will give us a homotopy $h$ of geodesics arcs $h_t$ that start in $\omega_1$ and end in $\omega_2.$ The image of the geodesic homotopy does not intersects other $\widehat{\gamma}$-arcs because of local uniqueness of the geodesics. Then we have that the geodesic homotopy induces an isotopy in $PT^1P$ between their corresponding canonical lifts. Moreover, the image of the geodesic homotopy induces a co-bounding annulus between the corresponding knots coming from the double of the corresponding canonical lifts in $D(PT^1P).$  Therefore contributing to a Seifert-fibered component, where the JSJ-decomposition separates this set of parallel knots from the rest of the manifold (see Figure \ref{DPgJSJ}).
 \vskip .2cm
 Let $\Omega$ be the subset of $\gamma$-arcs on $P$ having one arc for each homotopy class of $\gamma$-arcs on $P$. This means that $D({P}_{\widehat\gamma})^{hyp}\cong D({P}_{\widehat{\Omega}})^{hyp}.$ Moreover, $D({P}_{\widehat{ \Omega}})$ can be seen as a link complement in $\mathbb{S}^1\times  S^0$, see Definition \ref{DP}, whose projection to $S^0$ is a union of closed loops transversally homotopic to a union closed loops in minimal position. By using a multicurve version of a result due to Kra (\cite{IM02}, Theorem 1) which is an analog result for the hyperbolicity and JSJ-decomposition of $M_{\widehat\gamma},$ we have that the atoroidal piece of $D({P}_{\widehat{\Omega}})$ corresponds to the subsurface of $S^0$ which $D(\Omega)$ fills.

  \begin{enumerate}
   \item If the $\Omega$-arc configuration on $P$ is in the list of Remark \ref{bda}, then we have that $D({P}_{\widehat\gamma})^{hyp}=\emptyset$  and Remark \ref{bda} also gives us:
 $$v_3(\sharp\{\mbox{homotopy classes of} \hspace{.2cm}  \gamma\mbox{-arcs in} \hspace{.2cm} P\}-3)\leq v_3\sharp\{\mbox{cusps of}  \hspace{.2cm} D({P}_{\widehat\gamma})^{hyp}\}.$$
 \item If the $\Omega$-arc configuration on $P$ is not in the list of Remark \ref{bda}, then there is at least one geometric intersection point on the projection of the link complement $D({P}_{\widehat{ \Omega}})$  to $S^0.$  \end{enumerate} 
\vskip .2cm

\begin{figure}[h]
\centering
\includegraphics[scale=0.4] {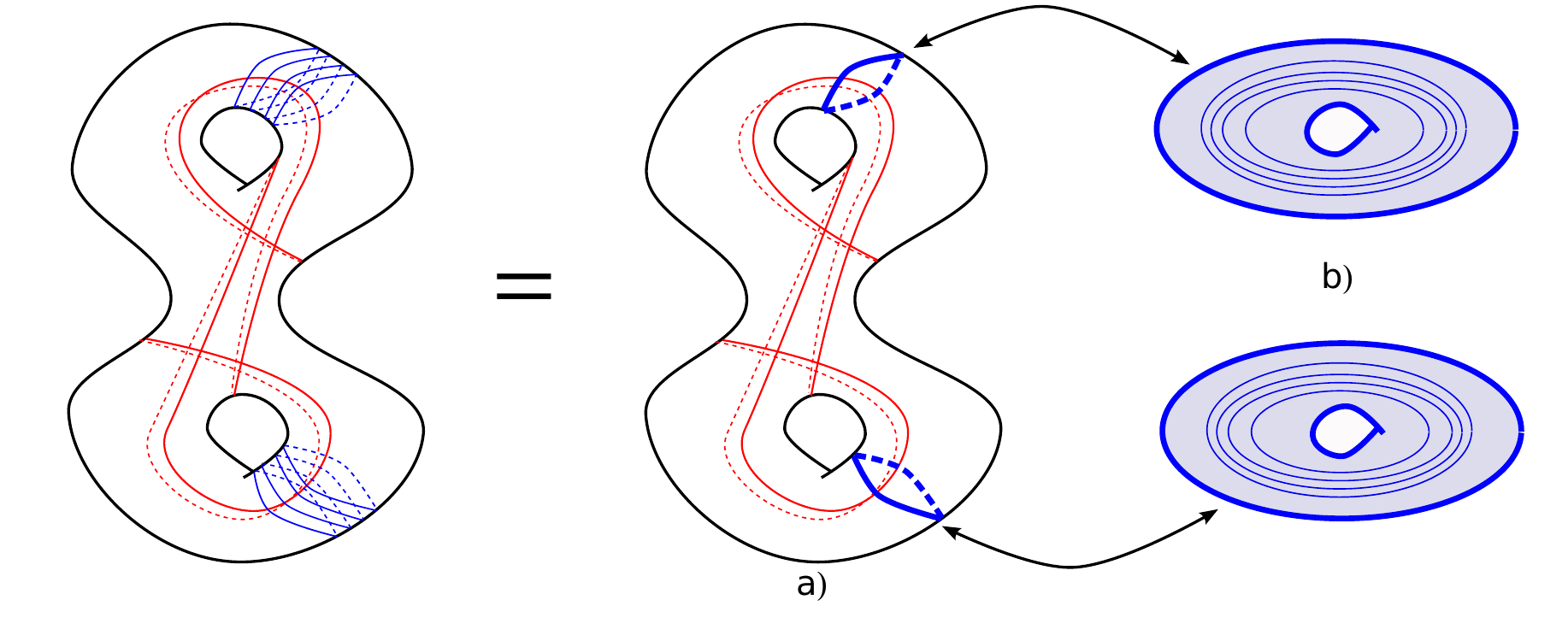}
\caption{The JSJ-decomposition of $D({P}_{\widehat{\gamma}})$ of Figure \ref{piste}c.
}\label{DPgJSJ}
\end{figure}
By (\cite{IM02}, Theorem 1) we conclude that $D({P}_{\widehat\gamma})^{hyp}\neq\emptyset.$ We will now define an injective function: 
$$\left\{ \begin{array}{l} \widehat\gamma\mbox{-arcs in} \\ \hspace{.5cm} PT^1P\end{array}\right\} \overset{\varphi}\longrightarrow \left\{ \begin{array}{l}\hspace{.1cm} \mbox{cusps of} \\ D({P}_{\widehat\gamma})^{hyp}\end{array}\right\}$$
where the target can be decomposed as:
$$\left\{ \begin{array}{l}\hspace{.1cm} \mbox{cusps of} \\ D({P}_{\widehat\gamma})^{hyp}\end{array}\right\}=\left\{ \begin{array}{l}\hspace{.3cm}\mbox{splitting tori of the} \\ \mbox{JSJ-decomposition of}  \\ \hspace{1.3cm} D({P}_{\bar\gamma})\end{array}\right\}\amalg\left\{ \begin{array}{l}\hspace{.8cm}\mbox{cusp in}  \\   D({P}_{\widehat\gamma}) \cap  D({P}_{\widehat\gamma})^{hyp}\end{array}\right\}$$ 
The function $\varphi$ is defined as follows: if the cusps in $D({P}_{\widehat\gamma})$ are induced by the $\widehat\gamma$-arc in $PT^{1}P$ belonging to the characteristic sub-manifold of $D({P}_{\widehat\gamma}),$  $\varphi$ maps it to a splitting tori connecting the hyperbolic piece with the component of the characteristic sub-manifold where it is contained. Otherwise, the cusp belongs to $D({P}_{\widehat\gamma})^{hyp}$ and $\varphi$ sends it to itself, see Figure \ref{DPgJSJ}. 
Assume that there are more isotopy classes of $\widehat\gamma$-arcs in $PT^{1}P$ than the number of cusps of $D({P}_{\widehat\gamma})^{hyp}$. Then, there are two tori, associated with non-isotopic $\widehat\gamma$-arcs in $PT^{1}P,$ that belong to the same connected component of the characteristic sub-manifold. Since each component of the characteristic sub-manifold is a Seifert-fibered space over a punctured surface we have that all such arcs correspond to regular fibres. Thus, they are isotopic in the corresponding component hence isotopic in $PT^{1}P,$ contradicting the fact that they were not isotopic.   
\vskip .2cm
Finally, since two isotopic $\widehat\gamma$-arcs in $PT^1P$ induce a homotopy between their projections in $P.$ Then for the case for $D({P}_{\widehat\gamma})^{hyp}\neq\emptyset,$ we have that\string:
 $$ v_3\sharp\{\mbox{homotopy classes of} \hspace{.2cm}  \gamma\mbox{-arcs in} \hspace{.2cm} P\}\leq v_3\sharp\{\mbox{cusps of}  \hspace{.2cm} D({P}_{\widehat\gamma})^{hyp}\}.$$
 \end{proof}

Proceeding as in Theorem \ref{1}, we give a lower bound for the simplicial volume of any continuous lift complement over a closed geodesic on a hyperbolic surface\string: 

\begin{corollary}\label{1.2}
Given a pants decomposition $\Pi$ on a hyperbolic surface $\Sigma$  and $\gamma$ a closed geodesic, we have that for any continuous lift $\widetilde\gamma$ we have\string:
\[
\pushQED{\qed}  \|M_{\widetilde\gamma}\|  \geq \frac{1}{2}\sum_{P \in \Pi}(\sharp\{\mbox{ isotopy classes of} \hspace{.2cm}  \widetilde\gamma\mbox{-arcs in} \hspace{.2cm} PT^1P\}-3).\qedhere
\popQED
\]    
 \end{corollary}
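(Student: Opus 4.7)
The plan is to run the argument of Theorem \ref{1} entirely at the level of simplicial volume, so that the hyperbolicity of $M_{\tilde\gamma}$ (which we no longer have for a general continuous lift) is never invoked for the ambient manifold. First, I would check that the incompressibility proof of Lemma \ref{irr} still applies to the embedded surfaces $(T_{\eta_i})_{\tilde\gamma}\subset M_{\tilde\gamma}$: its input is (i) incompressibility of $T_{\eta_i}$ in $PT^1\Sigma$ from Foulon--Hasselblatt, (ii) irreducibility of $PT^1\Sigma$, and (iii) minimality of the intersection of $\eta_i$ and $\gamma$ on $\Sigma$. None of these requires canonicity of the lift, so $S:=\bigsqcup_i (T_{\eta_i})_{\tilde\gamma}$ is an incompressible properly embedded surface in $M_{\tilde\gamma}$.

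The next step is to use the chain-level estimate that underlies the Agol--Storm--Thurston theorem: for any compact orientable $3$-manifold $N$ and any properly embedded incompressible surface $S\subset N$,
$$\|N\|\;\geq\;\tfrac{1}{2}\,\|D(N\setminus\setminus S)\|.$$
This inequality is extracted from the proof of Theorem 9.1 of \cite{AST07} before the hyperbolic-volume normalization is applied, and its validity depends only on the behaviour of fundamental cycles under cutting and doubling along $\pi_1$-injective surfaces. Combined with the additivity of the Gromov norm over disjoint unions, this yields
$$\|M_{\tilde\gamma}\|\;\geq\;\tfrac{1}{2}\sum_{i=1}^{-\chi(\Sigma)}\|D((P_i)_{\tilde\gamma})\|.$$

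Then, for each piece I would reduce to its hyperbolic JSJ-component via Gromov's additivity of simplicial volume under the JSJ decomposition (Seifert-fibered pieces contribute zero), giving $\|D((P_i)_{\tilde\gamma})\|=\|D((P_i)_{\tilde\gamma})^{hyp}\|$. Since $D((P_i)_{\tilde\gamma})^{hyp}$ is a finite-volume hyperbolic $3$-manifold, Gromov--Thurston proportionality identifies this with $Vol(D((P_i)_{\tilde\gamma})^{hyp})/v_3$, and Adams' cusp bound \cite{Ada88} then gives $\|D((P_i)_{\tilde\gamma})^{hyp}\|\geq\sharp\{\text{cusps of }D((P_i)_{\tilde\gamma})^{hyp}\}$. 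Finally, Lemma \ref{ch} was stated in terms of isotopy classes of $\tilde\gamma$-arcs in $T^1P_i$ (not homotopy classes of $\gamma$-arcs in $P_i$), so its statement applies verbatim to any continuous lift and supplies the desired injection between $\tilde\gamma$-arc isotopy classes and cusps.

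The only step that is not a mechanical adaptation is the general simplicial-volume inequality $\|N\|\geq\tfrac{1}{2}\|D(N\setminus\setminus S)\|$; I expect this to be the main point to justify carefully, since in the excerpt it is only used tacitly, packaged inside the hyperbolic formulation of Agol--Storm--Thurston. Once that chain-level statement is in hand, the remainder of the argument is a direct transcription of the proof of Theorem \ref{1}.
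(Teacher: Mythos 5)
Your overall route is the paper's: cut along $S=\bigsqcup_i (T_{\eta_i})_{\tilde\gamma}$, double the pieces, pass to the hyperbolic JSJ part, and count cusps via Lemma \ref{ch}; and you are right that Lemma \ref{irr} and Lemma \ref{ch} carry over verbatim to an arbitrary continuous lift, since neither uses canonicity. The genuine problem is the step you yourself flag as the crux. The inequality $\|N\|\geq\frac{1}{2}\|D(N\setminus\setminus S)\|$ for an arbitrary compact orientable $N$ is \emph{not} obtained by truncating the proof of (\cite{AST07}, Theorem 9.1) ``before the hyperbolic-volume normalization'': that proof isotopes $S$ to a least-area minimal surface in the hyperbolic metric on $N$, cuts to obtain a manifold with minimal-surface boundary, and then invokes a volume comparison under scalar curvature bounds (conformal deformation and Ricci flow). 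The hyperbolic metric on the \emph{ambient} manifold is used essentially, and no chain-level version of the inequality is established in \cite{AST07}. As written, your key step is therefore unsupported for a non-hyperbolic $M_{\tilde\gamma}$.

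The repair is already available inside the paper and is presumably what ``proceeding as in Theorem \ref{1}'' means. The Foulon--Hasselblatt theorem, as quoted in Section \ref{s1}, asserts that for a \emph{filling} $\gamma$ the complement $M_{\tilde\gamma}$ of \emph{any} continuous lift is a finite-volume hyperbolic manifold; hence (\cite{AST07}, Theorem 9.1) applies verbatim to $M_{\tilde\gamma}$, and Gromov--Thurston proportionality $Vol(M_{\tilde\gamma})=v_3\|M_{\tilde\gamma}\|$ converts its conclusion into the stated bound on the Gromov norm, with no need for a topological version of the cutting inequality. For a non-filling $\gamma$ one must first reduce, via the JSJ description of $M_{\tilde\gamma}$ given in Section \ref{s1}, to the hyperbolic piece $M_{1\tilde\gamma}$ lying over the subsurface filled by $\gamma$ (the Seifert-fibered part contributes zero to $\|M_{\tilde\gamma}\|$); this reduction is an extra step your proposal does not address. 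With that substitution, the remainder of your argument (additivity over the disjoint pieces, vanishing of the norm on Seifert JSJ components, Adams' cusp bound, and the injection of Lemma \ref{ch}) matches the paper's intended proof.
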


This result implies that there exist geodesics $\gamma$ on $\Sigma$ such that the $\Vol(M_{\widehat\gamma})$ can be as big as we want. Let us fix a pants decomposition on $\Sigma,$ then for any $N\in\mathbb{N}$ there exist a closed geodesic with at least $N$ homotopy classes of geodesic arcs in one pair of pants. This is constructed by taking $N$ non homotopic geodesic arcs in a pair of pants and linking them to form a filling closed geodesic on $\Sigma.$
\vskip .2cm
The lower  bound of the volume of  $M_{\widehat\gamma}$ obtained in Theorem \ref{1}, does not have control on the length of the geodesic, even if each homotopy class of $\gamma$-arcs contributes to the length of $\gamma.$ So a natural question is how big can the volume of $M_{\widehat\gamma}$ be when the length of the filling close geodesic $\gamma$ is bounded. In subsection \ref{4.2} we try to understand this relation.

\subsection{Coding filling geodesics on surfaces by splitting along a simple closed geodesic} \label{coding}

This subsection gives a method for describing closed geodesics on hyperbolic surfaces that intersect a given simple closed geodesic. This will be useful for the results in the subsequents sections because it also allows us to identify the homotopy classes of geodesic arcs generated by splitting along the simple closed geodesic.
\vskip .2cm
Let $\Sigma$ be a hyperbolic surface and fix $\beta$ be a separating (non-separating) simple closed geodesic on $\Sigma.$  Denote by $\Sigma^1$ the union of $\beta$ and one of the connected components of $\Sigma\setminus\beta$ and by $\Sigma^2$ the union of $\beta$ with the other connected component (resp. let $\Sigma^1$ be the complement of $\mathcal{N}_\beta$ a maximal normal embedded neighborhood of $\beta$ in $\Sigma$ whose boundary is a closed geodesic). Choose a point $p\in \beta$ to be the base point of the fundamental group of  $\Sigma$ (resp. $p$ in the self intersection of the closed geodesic which is the boundary of $\mathcal{N}_\beta$). As a consequence of Van Kampen's Theorem, when $\beta$ is separating, $\pi_1(\Sigma,p)$ is isomorphic to the free product of $\pi_1(\Sigma^1,p)$ with $\pi_1(\Sigma^2,p)$  amalgamated on the subgroup $\pi_1(\beta,p).$  If $\beta$ is non-separating, $\pi_1(\Sigma,p)$ is isomorphic to the HNN extension of $\pi_1(\Sigma^1,p)$ relative to an isomorphism between two simple curves in the boundary of $\mathcal{N}_\beta.$   Notice also that the simple closed geodesic $\tau$ inside $\mathcal{N}_\beta$ perpendicular to $\beta,$ passing thought $p,$ represents the stable element, denoted by $t.$

\begin{definition*} \normalfont 
Let $\beta$ be a separating (non-separating) simple closed geodesic on $\Sigma,$ and $n$ be a positive integer. A finite sequence 
$$(g_1,g_2,...,g_n)\hspace{.2cm} \hspace{.2cm}(\mbox{resp.} \hspace{.2cm} (g_0,t^{\varepsilon_1}, g_1,t^{\varepsilon_2},...,g_{n-1},t^{\varepsilon_n}))$$
of elements of $ \pi_1(\Sigma^1,p)\ast_\beta  \pi_1(\Sigma^2,p)$ (resp. $\pi_1(\Sigma^1,p)^{\ast_\beta})$ is  $\beta$\textit{-cyclically reduced} if the following conditions holds\string:
\begin{enumerate}
\item each $g_i$ is in one factor $\pi_1(\Sigma^1,p)$ or $\pi_1(\Sigma^2,p)$  (resp. $\varepsilon_i\in\{1,-1 \}$ and  $g_i\in\pi_1(\Sigma^1,p));$
\item for each $i\in \{1,...,n-1 \},$ $g_i$ and $g_{i+1}$ are not in the same factor, (resp. there is no consecutive subsequence of the form $(t^{\varepsilon}, g_i,t^{-\varepsilon})$ whose product is reducible);
\item if $n=1,$ then $g_1$ is not the identity (resp. for $n=0);$
\item any cyclic permutation of the sequence satisfies $(1)$-$(3).$
\end{enumerate}
\end{definition*}

The following statement allows, by fixing a minimal system of generators in $\pi_1(\Sigma^i,p),$ to associate to each oriented closed geodesic in $\Sigma$ a cyclically reduced sequence. The interested reader can find this material in \cite{Cha10}.

\begin{theorem*}[Chas \cite{Cha10} Section 2 and 4]
\begin{enumerate}
\item Let $s$ be a conjugacy class of $\pi_1(\Sigma^1,p)\ast_\beta  \pi_1(\Sigma^2,p)$ (resp. $\pi_1(\Sigma^1,p)^{\ast_\beta}).$ Then there is a representative of $s$ which can be written as a product\string: $$g_1g_2...g_n\hspace{.2cm} \hspace{.2cm}(\mbox{resp.} \hspace{.2cm} g_0t^{\varepsilon_1}g_2t^{\varepsilon_2}...g_{n-1}t^{\varepsilon_n}),$$ where $(g_1,g_2,...,g_n)$ (resp. $(g_0,t^{\varepsilon_1}, g_1,t^{\varepsilon_2},...,g_{n-1},t^{\varepsilon_n})$) is a  $\beta$-cyclically reduced sequence.
\item If $n$ is a positive integer and $$(g_1,g_2,...,g_n)\hspace{.2cm}(\mbox{resp.} \hspace{.2cm} (g_0,t^{\varepsilon_1}, g_1,t^{\varepsilon_2},...,g_{n-1},t^{\varepsilon_n}))$$ 

is $\beta$-cyclically reduced sequence then the product 

$$g_1g_2...g_n\hspace{.2cm}(\mbox{resp.} \hspace{.2cm} g_1t^{\varepsilon_1}g_2t^{\varepsilon_2}...g_{n}t^{\varepsilon_n})$$  is not the identity.
\item For any pair of $\beta$-cyclically reduced sequences representing the same conjugacy class in $\pi_1(\Sigma^1,p)\ast_\beta  \pi_1(\Sigma^2,p)$ (resp. $\pi_1(\Sigma^1,p)^{\ast_\beta}).$ The number of terms of the two sequences is the same.
\end{enumerate}
\end{theorem*}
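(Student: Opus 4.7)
The plan is to derive all three items from the classical Normal Form Theorem for amalgamated free products (in the separating case) and from Britton's Lemma together with the Normal Form Theorem for HNN extensions (in the non-separating case); the two cases are handled in parallel. The central observation is that a $\beta$-cyclically reduced sequence is nothing more than a normal-form expression that remains reduced after every cyclic rotation.

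For part (1), I would begin with an arbitrary representative $w$ of the conjugacy class $s$, written as a product of elements of the vertex groups $\pi_1(\Sigma_1,p)$ and $\pi_1(\Sigma_2,p)$ (together with the stable letter $t^{\pm 1}$ in the HNN case). Two elementary length-decreasing moves are available: first, collapse any two consecutive factor-group entries lying in the same vertex group into a single element; second (in the HNN case) replace any subword $t^{\varepsilon} g_i t^{-\varepsilon}$ with $g_i$ in the associated subgroup via the defining HNN relation. Iterating these moves produces a reduced sequence satisfying conditions (1)--(3) of the definition. To enforce the cyclic condition (4), I would perform cyclic conjugation: whenever a rotation of the sequence exposes a reducible pair at the seam, conjugate to move the offending term to the interior and reduce again. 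Each such step strictly decreases the length, so the process terminates at a $\beta$-cyclically reduced representative of $s$.

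Part (2) is then an immediate application of the Normal Form Theorem in the separating case and of Britton's Lemma in the non-separating case: any reduced sequence of positive length represents a non-identity element of $\pi_1(\Sigma_1,p)\ast_\beta\pi_1(\Sigma_2,p)$, respectively of $\pi_1(\Sigma_1,p)^{\ast_\beta}$. Part (3) follows from the Conjugacy Theorem for amalgamated products and HNN extensions, which states that two $\beta$-cyclically reduced sequences determine the same conjugacy class exactly when one is obtained from the other by a cyclic permutation followed by conjugation by an element of the amalgamated (resp.\ associated) subgroup; neither operation alters the number of factor-group terms, so the length is a conjugacy invariant.

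The only genuinely delicate step is the termination argument in part (1) for the HNN case, because each HNN reduction may juxtapose two new vertex-group elements and thereby create further reducible subwords, which can in turn interact with the cyclic-rotation step. The main bookkeeping task is to choose a length function (for instance, the number of $t^{\pm 1}$ occurrences, broken by a secondary count of vertex-group factors) on which every move is strictly decreasing. Once this is in place, parts (2) and (3) are direct appeals to the classical normal-form machinery summarised in \cite{Cha10}, so the proof is essentially an application of standard combinatorial group theory rather than a new argument.
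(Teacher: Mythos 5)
Your proposal is correct and follows the standard normal-form/Britton's-lemma/conjugacy-theorem argument of combinatorial group theory; the paper itself gives no proof of this statement, deferring to the classical theory via \cite{Cha10} and \cite{Ser77}, so your sketch supplies essentially the same argument those references contain. One cosmetic point: an HNN pinch replaces $t^{\varepsilon} g t^{-\varepsilon}$ by the image of $g$ under the isomorphism between the associated subgroups (or its inverse), not by $g$ itself, but this does not affect the termination argument or any other step.
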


Consider $\beta$ any essential simple closed geodesic in  $\Sigma_{1,1}$ and the HNN extension of $\pi_1(\Sigma_{1,1},p)$ obtained by splitting $\Sigma_{1,1}$ along $\beta$\string: $$\langle a,b,t\mid tat^{-1}=b \rangle,$$ where $ab^{-1}$ represents the conjugacy class of a simple closed curve parallel to the cusp, $t$ the stable element representing the class of $\tau$ and $b$ representing the class of $\beta.$ 

\begin{corollary}\label{arcs}
Let $\gamma$ be an oriented closed geodesic in $\Sigma_{1,1}$ intersecting $\beta,$ with its $\beta$-cyclically reduced sequence of the form $(g_0,t, g_1,t,...,g_{n-1},t)$ such that each $g_i$  starts with $b^{\pm1}$ and ends with $a^{\pm1}.$ Then the sequence of $\gamma$-arcs $(\gamma\mid_{I_1},\gamma\mid_{I_2},...,\gamma\mid_{I_n})$ that result from splitting $\gamma$  along $\beta$ satisfy the following properties\string:
\begin{enumerate}
\item the $\gamma$-arc $\gamma\mid_{I_i}$ is representative of $(t,g_i,t);$
\item the inclusion $\gamma\mid_{I_i}\subset \Sigma_{1,1}\setminus \beta$ holds;
\item if $\gamma\mid_{I_i}$ and $\gamma\mid_{I_j}$ are freely homotopic relative to $\beta,$ then $g_i=g_j.$
\end{enumerate}
\end{corollary}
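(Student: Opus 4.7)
The plan is to prove the three assertions in turn, with (2) immediate from the construction, (1) following from tracing the Bass--Serre / HNN extension structure on the universal cover, and (3) being the substantive rigidity claim where the hypothesis on the initial and terminal letters of the $g_i$ intervenes. Assertion (2) is tautological: the arcs $\gamma|_{I_i}$ are by definition the closures of the connected components of $\gamma\setminus(\gamma\cap\beta)$, so each lies in $\Sigma_{1,1}\setminus\beta$ with endpoints on $\beta$.

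For (1), I would work in the universal cover $\widetilde{\Sigma_{1,1}}$ and the Bass--Serre tree $T$ of the HNN splitting $\pi_1(\Sigma_{1,1})=\pi_1(\Sigma_1)^{\ast_\beta}$. A lift $\widetilde{\gamma}$ of $\gamma$ is a bi-infinite geodesic whose image in $T$ is an axis on which $\gamma$ acts by translation; the translation length per period equals the number $n$ of positive stable letters in the cyclic sequence, which also equals $\sharp(\gamma\cap\beta)$. Between two consecutive edge crossings the lifted geodesic lies in a single vertex domain, i.e., a lift of $\Sigma_1$, and its projection is an arc of $\gamma$ in $\Sigma_{1,1}\setminus\beta$ realising the element $g_i\in\pi_1(\Sigma_1)$. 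Thus each $\gamma|_{I_i}$ is the piece corresponding to $g_i$ sandwiched between the two $t$'s flanking it in the cyclic word, which is the content of ``$(t,g_i,t)$''.

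For (3), suppose $\gamma|_{I_i}$ and $\gamma|_{I_j}$ are freely homotopic rel $\beta$. A rel-$\beta$ homotopy lifts to $\widetilde{\Sigma_{1,1}}$ and, because the preimage of $\beta$ is a disjoint union of lines, must keep the initial (resp.\ terminal) endpoint inside a single lift $\widetilde{\beta}_0$ (resp.\ $\widetilde{\beta}_1$) of $\beta$ throughout the homotopy. After arranging both arcs to start on a common $\widetilde{\beta}_0$, the terminal lift is then determined by the arc: by part (1) it equals $(g_i t)\cdot\widetilde{\beta}_0$ in one case and $(g_j t)\cdot\widetilde{\beta}_0$ in the other. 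Equating these as subsets of $\widetilde{\Sigma_{1,1}}$ forces $g_j^{-1}g_i \in \operatorname{Stab}(\widetilde{\beta}_0) = \langle b\rangle$, so $g_i = g_j b^k$ for some $k\in\mathbb{Z}$. The hypothesis that both $g_i$ and $g_j$ end with $a^{\pm 1}$ (rather than with a power of $b$) then forces $k=0$, yielding $g_i = g_j$.

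The main obstacle is the rigorous translation from free homotopy rel $\beta$ to the algebraic relation $g_j^{-1}g_i \in \langle b\rangle$: because endpoints are allowed to slide along $\beta$ during the homotopy, only cosets modulo the edge stabilizer $\langle b\rangle$ are determined geometrically. The normal-form hypothesis in the statement (each $g_i$ starting with $b^{\pm 1}$ and ending with $a^{\pm 1}$) is precisely what is needed to kill this residual $\langle b\rangle$-ambiguity on both ends and to extract equality on the nose.
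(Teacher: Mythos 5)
Parts (1) and (2) of your argument are fine; your Bass--Serre/universal-cover derivation of (1) is an acceptable substitute for the paper's route, which instead invokes Lemma 5.2 of Chas to produce explicit curve representatives $\gamma_i$ of the $g_i$ and writes $\gamma$ as $\gamma_0\tau\gamma_1\tau\cdots\gamma_{n-1}\tau$. The gap is in (3), at the step ``after arranging both arcs to start on a common $\widetilde\beta_0$, the terminal lift is then determined by the arc.'' It is not: there are infinitely many lifts of $\gamma|_{I_i}$ whose initial point lies on $\widetilde\beta_0$, differing by the infinite cyclic group $\operatorname{Stab}(\widetilde\beta_0)$, and their terminal lines sweep out a full $\operatorname{Stab}(\widetilde\beta_0)$-orbit. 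The lifted homotopy only matches \emph{some} lift of $\gamma|_{I_i}$ with \emph{some} lift of $\gamma|_{I_j}$, so the correct conclusion is $s(g_it)\widetilde\beta_0=s'(g_jt)\widetilde\beta_0$ with $s,s'\in\operatorname{Stab}(\widetilde\beta_0)$, i.e.\ a two-sided double-coset relation of the shape $g_i\in\langle a\rangle\,g_j\,\langle b\rangle$ (after unwinding $tat^{-1}=b$ to identify the stabilizers of the initial and terminal lines with conjugates of $\langle a\rangle$ and $\langle b\rangle$ inside $\langle a,b\rangle$), not the one-sided relation $g_i=g_jb^k$ you wrote down. Geometrically this is just the fact that a free homotopy rel $\beta$ may slide \emph{both} endpoints around $\beta$, changing the word by a boundary power on each side.

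The tell is that your argument never uses the hypothesis that each $g_i$ \emph{starts} with $b^{\pm1}$: that half of the normal form is exactly what kills the left-hand $\langle a\rangle$-factor, just as ``ends with $a^{\pm1}$'' kills the right-hand $\langle b\rangle$-factor. The repaired conclusion is: from $g_i=a^mg_jb^k$ in reduced form, ``starts with $b^{\pm1}$'' forces $m=0$ and ``ends with $a^{\pm1}$'' forces $k=0$, hence $g_i=g_j$. One must also verify that the left ambiguity really is by $\langle a\rangle$ and the right by $\langle b\rangle$ and not, say, $\langle b\rangle$ on the left and $\langle a\rangle$ on the right, in which case the stated normal form would \emph{not} select a unique double-coset representative (e.g.\ $ba$ and $b^2a^2=b(ba)a$ lie in the same $\langle b\rangle\backslash\cdot/\langle a\rangle$ coset while both start with $b$ and end with $a$); this bookkeeping is where the orientation conventions enter. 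The paper's own proof of (3) phrases this as a bijection $\alpha\mapsto\tau_0\ast\alpha\ast\tau_1$ between $\pi_1(\Sigma_1,p)$ and homotopy classes of arcs, but the essential content --- trivializing the boundary-winding ambiguity at \emph{both} ends via the normal form --- is precisely the point your write-up elides.
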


\begin{proof}[\bf{Proof}] 
By Lemma 5.2 in \cite{Cha10} there exist sequence of curves $(\gamma_0,\gamma_1,...,\gamma_{n-1})$ such that the curve $\gamma_i\subset \Sigma^1$ is a representative of $g_i.$ 
\vskip .2cm
Denote by $\gamma$ the curve $\gamma_0\tau\gamma_1\tau...,\gamma_{n-1}\tau,$ which clearly is a representative of $\gamma$ and consider the $\gamma$-arc $\gamma\mid_{I_i}\subset \Sigma_{1,1}$ which is homotopic relative to the boundary to the concatenation of arcs $\tau_0,\gamma_i$ and $\tau_1,$ where $\tau_0$ and $\tau_1$ are the two $\tau$-arcs from the intersection point of $\tau$ with $\beta$ to $p.$
\vskip .2cm
To prove $3)$ it is enough to show that there is an isomorphism between $\pi_1(\Sigma^{1},p)$ and the group of free homotopy classes of arcs in $\Sigma_{1,1}\setminus \beta$ relative to the boundary, which start in one component and end in the other. This isomorphism is given as follows, given $\alpha$ a loop based at $p$ construct the arc $\tau_0*_p\alpha*_p\tau_1.$ The inverse of the isomorphism consist in collapsing the endpoints of a given arc to $p$ along $\tau_0$ and $\tau_1.$
\end{proof}

 \begin{figure}[h]
\centering
\includegraphics[scale=0.4]{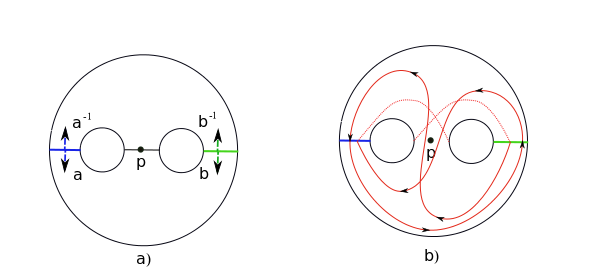}
\caption{ a) Orientation of the arcs transversal to the seam edges. b) Geodesic  arc corresponding to the code $(t,bab^{-1}a^{-1},t).$
}\label{1arcs}
\end{figure} 

\begin{note}\label{arcs0}
The $\gamma$-arcs $\gamma\mid_{I_i}$ of Corollary \ref{arcs} can be encoded by using two oriented seam geodesic arcs between $\beta$ and the boundary of $\Sigma_{1,1}.$ The coding comes from replacing each letter in $g_i$ with an arc connecting one seam edge to another following the direction prescribed by $g_i$ (see Figure \ref{1arcs}).
\end{note}

In the case when $\beta$ is a separating  simple closed geodesic on a hyperbolic surface $\Sigma,$ by (\cite{Cha10}, Lemma 3.2) we have the following result\string:

\begin{corollary}\label{arcs2}
Let $\gamma$ be an oriented closed geodesic in $\Sigma$ intersecting the separating geodesic $\beta.$ Consider its $\beta$-cyclically reduced sequence of the form $(g_1,g_2,...,g_n).$ Then the sequence of $\gamma$-arcs $(\gamma\mid_{I_1},\gamma\mid_{I_2},...,\gamma\mid_{I_n})$ that results from splitting $\gamma$  along $\beta$ satisfy that if we identify the endpoint of an arc $\gamma\mid_{I_i}$ along $\beta,$ then the resulting loop is representative of the conjugacy class of $g_i.$\hfill $\square$
\end{corollary}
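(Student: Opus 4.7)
The plan is to mirror the proof of Corollary \ref{arcs} but in the amalgamated product setting coming from van Kampen's theorem:
$$\pi_1(\Sigma,p)\;\cong\;\pi_1(\Sigma_1,p)\ast_{\pi_1(\beta,p)}\pi_1(\Sigma_2,p).$$
Invoking the amalgamated-product version of (\cite{Cha10}, Lemma 5.2) (this is part of Lemma 3.2 in loc.\ cit.), I would first build a based loop
$$\bar\gamma \;=\; \gamma_1 \ast_p \gamma_2 \ast_p \cdots \ast_p \gamma_n,$$
freely homotopic to $\gamma$, with each $\gamma_i$ a loop based at $p\in\beta$ lying entirely in the factor $\Sigma_1$ or $\Sigma_2$ dictated by the alternation of the $\beta$-cyclically reduced sequence, and representing the conjugacy class of $g_i$ in that factor.

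By construction $\bar\gamma$ meets $\beta$ only at $p$, with exactly $n$ transversal passages separating consecutive $\gamma_i$'s; so splitting $\bar\gamma$ along $\beta$ produces precisely the sequence of arcs $(\gamma_1,\ldots,\gamma_n)$, and identifying the endpoints of the $i$-th arc along $\beta$ (trivial here, since both already coincide with $p$) recovers the loop $\gamma_i$ itself, whose conjugacy class is $g_i$. The next step is to transfer the picture from $\bar\gamma$ to the actual geodesic $\gamma$. For this, I would use that any closed geodesic is in minimal position with the simple closed geodesic $\beta$, and that the integer $n$ appearing as the length of a $\beta$-cyclically reduced sequence is an invariant of the conjugacy class $[\gamma]$ by part $(3)$ of the theorem above; once $\bar\gamma$ has been put in minimal position, one concludes $i(\gamma,\beta) = n = i(\bar\gamma,\beta)$.

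Finally, a free homotopy from $\bar\gamma$ to $\gamma$ preserving the intersection pattern with $\beta$ (which exists by the bigon criterion, since no bigon can survive between two curves in minimal position) restricts on each complementary arc to a free homotopy of arcs in $\Sigma_1$ or $\Sigma_2$ relative to $\beta$ between $\gamma|_{I_i}$ and $\gamma_i$. Thus closing $\gamma|_{I_i}$ up through $\beta$ produces a loop freely homotopic, in the corresponding subsurface, to $\gamma_i$, which represents $g_i$. The main obstacle will be making the last step rigorous, i.e.\ arranging the free homotopy between $\bar\gamma$ and $\gamma$ so that it behaves cellularly with respect to $\beta$ and matches the cyclic indexing of the complementary arcs. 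One can either invoke the standard bigon-reduction machinery for curves on surfaces, or work purely group-theoretically: by part $(3)$ the decomposition $(g_1,\ldots,g_n)$ is unique up to cyclic permutation, so any two families of arcs that realise $[\gamma]$ combinatorially along $\beta$ must agree with the one produced from $\bar\gamma$ up to a cyclic relabelling.
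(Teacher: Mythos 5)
The paper offers no written proof of this corollary --- it is stated with a \qedsymbol{} as an immediate consequence of (\cite{Cha10}, Lemma 3.2) --- and your argument is precisely the natural one, mirroring the paper's own proof of Corollary \ref{arcs} in the HNN case: produce based representatives $\gamma_i$ of the $g_i$ via Chas's lemma, concatenate them at $p\in\beta$, and transfer to the geodesic using minimal position and the uniqueness (up to cyclic permutation) of the $\beta$-cyclically reduced sequence. Your proposal is correct and takes essentially the same approach the paper intends, just with the details spelled out.
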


The following result is a criterion to verify when the unique $\beta$-cyclically reduced sequence associated to $\gamma$ in Corollary \ref{arcs} is a filling geodesic.

\begin{lemma}\label{fitor}
Let $\gamma$ be an oriented closed geodesic as  in Corollary \ref{arcs}. If there exist a term $g_i,$ in the $\beta$-cyclically reduced sequence of $\gamma,$ with a sub word of the form $a^{\pm1}b^{\pm1}$ or $b^{\pm1}a^{\pm1}.$ Then $\gamma$ is filling.
\end{lemma}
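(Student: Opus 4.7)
The plan is to prove the contrapositive: suppose $\gamma$ does not fill $\Sigma_{1,1}$, and I will show that no $g_i$ contains a subword of the form $a^{\pm 1}b^{\pm 1}$ or $b^{\pm 1}a^{\pm 1}$. By the failure of filling, there exists an essential non-peripheral simple closed curve $\alpha$ on $\Sigma_{1,1}$ with $i(\alpha,\gamma)=0$. Since by the hypothesis of Corollary~\ref{arcs} the geodesic $\gamma$ intersects $\beta$, such an $\alpha$ cannot be freely homotopic to $\beta$. On the once-punctured torus any two essential simple closed curves are disjoint (up to isotopy) if and only if they are isotopic, so $i(\alpha,\beta)\geq 1$.

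Next I would cut $\Sigma_{1,1}$ along $\beta$ to obtain the pair of pants $\Sigma_1$ with boundary components $\beta^+$, $\beta^-$, and $\partial\Sigma_{1,1}$. After placing $\alpha$ and $\gamma$ in minimal position with $\beta$, both project to disjoint families of proper simple arcs in $\Sigma_1$ whose endpoints lie on $\beta^+\cup\beta^-$. Up to isotopy fixing the boundary setwise there are only three essential types of such arcs: the cross arc from $\beta^+$ to $\beta^-$; the return arc at $\beta^+$ separating $\beta^-$ from the puncture; and the return arc at $\beta^-$ separating $\beta^+$ from the puncture. Because $\alpha$ closes up as a single simple closed curve, its arc system is a disjoint union of parallel copies of one of these three types, or of paired return arcs at $\beta^+$ and $\beta^-$ glued alternately along $\beta$.

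In the encoding of Remark~\ref{arcs0} the letters $a^{\pm 1}$ and $b^{\pm 1}$ correspond to elementary subarcs in $\Sigma_1$ rounding the two distinct $\beta$-boundaries, so a subword $a^{\pm 1}b^{\pm 1}$ or $b^{\pm 1}a^{\pm 1}$ in $g_i$ describes a portion of $\gamma\cap\Sigma_1$ that rounds one $\beta$-boundary and immediately thereafter rounds the other. By direct inspection in the pair of pants such a portion must cross every cross arc from $\beta^+$ to $\beta^-$ (since it moves from near $\beta^-$ to near $\beta^+$ inside $\Sigma_1$) and every return arc at $\beta^{\pm}$ separating the puncture (since rounding the opposite $\beta$-boundary forces a passage between the puncture-free region and the region containing the puncture). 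This contradicts $\alpha\cap\gamma=\emptyset$, completing the argument.

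The main obstacle will be carrying out this final inspection precisely. The encoding of Remark~\ref{arcs0} depends on a choice of two seams from $\beta$ to the puncture, and one must verify the crossing claim for every sign choice of the subword $a^{\pm 1}b^{\pm 1}$ or $b^{\pm 1}a^{\pm 1}$ and every configuration of $\alpha$-arcs, including the subtlest alternating-return case. Choosing $\alpha$ in minimal position with $\beta$ and taking the seams transverse to $\alpha$ normalizes the picture and reduces the verification to a finite, direct case check drawn in the pair of pants.
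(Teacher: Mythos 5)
Your overall strategy---prove the contrapositive, take an essential simple closed curve $\alpha$ disjoint from $\gamma$, and analyze its arcs in the pair of pants obtained by cutting $\Sigma_{1,1}$ along $\beta$---is genuinely different from the paper's, and it could in principle be made to work. But the proof as written has a gap at exactly the step you flag as ``the main obstacle'': the final ``direct inspection'' is asserted, not performed, and the justification you offer for it is not valid. The parenthetical reason ``since it moves from near $\beta^-$ to near $\beta^+$ inside $\Sigma_1$'' does not imply crossing a cross arc: an arc can travel between neighborhoods of the two $\beta$-boundaries of a pair of pants without meeting a given arc from $\beta^+$ to $\beta^-$, simply by passing through the region adjacent to the third boundary component. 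What the mixed subword actually buys you is that the corresponding portion of $\gamma|_{I_i}$ winds fully around each of the two $\beta$-boundaries in succession, so that together with short sub-arcs of $\beta^{\pm}$ it separates $\beta^+$ from $\beta^-$ in $P$; and because $\alpha$ and $\gamma$ are geodesics (hence in minimal position), the crossing claim must be established at the level of homotopy classes of arcs --- you need the geometric intersection number of the arc classes to be positive, not merely that some representatives cross. None of this is carried out, and it is the entire content of the lemma. A smaller point: on $\Sigma_{1,1}$ the geometric and algebraic intersection numbers of essential simple closed curves coincide, so every arc of $\alpha$ in $P$ is a cross arc; your ``alternating return arc'' case never occurs, which would shorten the verification you still owe.

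For comparison, the paper avoids this case analysis entirely. It observes that on the once-punctured torus it suffices to exhibit two \emph{simple} subloops $\alpha_1,\alpha_2$ of $\gamma$ with $i(\alpha_1,\alpha_2)\neq 0$ (any essential simple closed curve disjoint from $\gamma$ would then be isotopic to both, which is impossible). It gets $\alpha_1$ transverse to $\beta$ from the fact that $\gamma$ is non-simple and meets $\beta$, via (\cite{Cha10}, Theorem 5.3), and it gets $\alpha_2$ homotopic to $\beta$ from the mixed subword using the seam encoding of Remark \ref{arcs0}. That route uses the subword hypothesis only once, to produce a subloop parallel to $\beta$, and never needs to control intersections with an unknown curve $\alpha$.
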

\begin{proof}[\bf{Proof}] 
It is enough to show that there exist two simple loops formed by sub arcs $\alpha_1$ and $\alpha_2$ of $\gamma$ such that $i(\alpha_1,\alpha_2)\neq 0.$  
\vskip .2cm
Since $\gamma$  intersects $\beta$ there exist $g_i\notin  \langle a \rangle\cup\langle b \rangle.$ Then $\gamma$ is not a simple closed curve in $\Sigma_{1,1}$ and there is a simple loop formed by a sub arc $\alpha_1$ of $\gamma$ which is transversal to $\beta$ ( \cite{Cha10}, Theorem 5.3).
\vskip .2cm
By the Remark \ref{arcs0}, the fact that there exist $g_i$ with a sub word of the form $a^{\pm1}b^{\pm1}$ or $b^{\pm1}a^{\pm1}$ implies that there is a $\gamma$-arc $\gamma\mid_{I_i}$ corresponding to $g_i$ (Corollary \ref{arcs}) that contains a sub arc $\alpha_2$ which induces a loop homotopic to $\beta.$ 
\end{proof}

A first application to the previous Lemma \ref{fitor}  and Theorem \ref{1}, is the following result.

\begin{reptheorem}{mod}
For the modular surface $\Sigma_{mod},$ there exists a sequence $\{\gamma_k\}$ of filling closed geodesics on $\Sigma_{mod}$ with $n_{\gamma_k}\nearrow \infty$ such that,
$$ \Vol(M_{\widehat{\gamma_k}})\geq v_3 \frac{n_{\gamma_k}}{12},$$
where $n_{\gamma_k}$ is half the period of the continued fraction expansion of $\gamma_k$ and $v_3$ is the volume of a regular ideal tetrahedron. 
\end{reptheorem}

\begin{proof}[\bf{Proof}] 
Let $x=\begin{pmatrix} 
1 & 1 \\
0 & 1 
\end{pmatrix}$ 
and $y=\begin{pmatrix} 
1 & 0 \\
1 & 1 
\end{pmatrix}.$  
Consider the once-punctured modular torus, whose fundamental group is generated by\string:
$$t:=xy=\begin{pmatrix} 
2 & 1 \\
1 & 1 
\end{pmatrix}\hspace{.5cm}\mbox{and}  \hspace{.5cm}a:=x^{-1}y^{-1}=\begin{pmatrix} 
2 & -1 \\
-1 & 1 
\end{pmatrix},$$
which has the following presentation $\langle a,b,t\mid tat^{-1}=b \rangle.$ 
Let us denote as $q$ the covering map of the once-punctured modular torus over the modular surface. Notice that $q$ is a finite cover of index $6.$ Also, the geodesic representative of $b$ and $a$ on the once-punctured modular torus are the same because they are conjugated. Moreover,  $ba^{-1}$ represents the conjugacy class of a simple loop that is retractable to the cusp of the once-punctured modular torus, because $ba^{-1}=tat^{-1}a^{-1}$ (the conmutator of $a$ and $t)$ is equal to the parabolic matrix $-x^6.$ 

 \begin{figure}[h]
\includegraphics[scale=0.35] {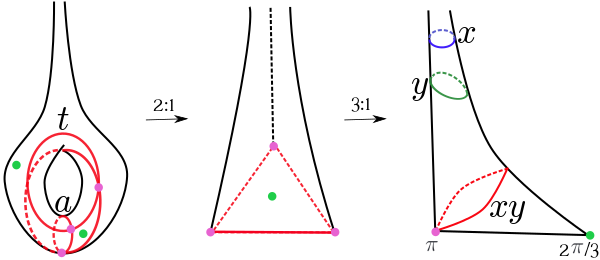}
\caption{The once-punctured modular torus and the three-fold covers of the modular surface, with loops representing the parabolic matrices $x$ and $y;$ the closed geodesic representing $xy$ is marked on the modular surface. Also, we marked the three closed geodesics lifts of $xy$ on the once-punctured modular torus, two of which are $a$ and $t.$
}\label{loop}
\end{figure} 

By using the coding of closed geodesics in the modular surface with respect $\{x,y\}$ (see \cite{BPS17}, Section 3), we consider the sequence of closed geodesics $\{\gamma_k\}$ on the modular surface $\Sigma_{mod}$ such that $\gamma_k$ is the closed geodesic representative of  the matrix\string:
$$A_k:=\prod\limits_{i=1}^{k}(x^{6i+1}y).$$
 And let $\{\overline {\gamma_k}\}$ be the sequence of closed geodesics on the once-punctured modular torus such that $\overline {\gamma_k}$ is the closed geodesic representative of the matrix\string:
$$B_k:=\prod\limits_{i=1}^{k}((ba^{-1})^{i}t).$$

Notice that $\overline {\gamma_k}$ is a lift of $\gamma_k$ under the covering map $q.$ To see this it is enought to verify that both closed geodesic lift to the same geodesic line in the universal cover, which is a consequence of the fact that $A_k=B_k$ up to sign. Indeed, we can rewrite each factor of $A_k$ in terms of $\{a,b^{-1},t\}$ by the following equality\string:
$$(ba^{-1})^it=(tat^{-1}a^{-1})^it=(x^6)^ixy=x^{6i+1}y.$$
Moreover, $B_k$ represents an $a$-cyclic reduced word associated to $\overline {\gamma_k}$ on the once-punctured modular torus, meaning that $\overline {\gamma_k}$ is filling by Lemma \ref{fitor}. Also, if we split the once-punctured modular torus along the simple closed geodesic associated to $a,$ then the number of homotopy classes of $\overline{\gamma_k}$-arcs after the splitting  is $k,$ by Corollary \ref{arcs}.
\vskip .2cm
In order to estimate a lower bound for the volume, let $\overline {M}$ be the projective unit tangent bundle of once-punctured modular torus and let $\overline {\Gamma_k}$ be all the lifts of $\gamma_k$ under the covering map $q.$ As $\overline {\gamma_k}$ is filling, then $\overline{M}_{\widehat{\overline {\gamma_k}}}$ is hyperbolic. Moreover, as simplicial volume is multiplicative under finite coverings and does not increase under Dehn filling (\cite{Thu79} Theorem 6.5.4), then\string:

 $$ 6\Vol(M_{\widehat{\gamma_k}})=6\|M_{\widehat{\gamma_k}}\|=\| \overline{M}_{\widehat{\overline {\Gamma_k}}}\|\geq \| \overline{M}_{\widehat{\overline {\gamma_k}}}\|=\Vol(\overline{M}_{\widehat{\overline {\gamma_k}}}).$$

Finally, by Theorem \ref{1} we have a lower bound of $\Vol(\overline{M}_{\widehat{\overline {\gamma_k}}})$ given by $\frac{v_3}{2}k,$ which implies that $v_3\frac{k}{12}$ is a lower bound for $\Vol(M_{\widehat{\gamma_k}}). $

\end{proof}

\begin{note*}
For the filling closed geodesics  $\{\gamma_k\}$ on Corollary \ref{mod} we can also rewrite the combinatorial upper bound found by Bergeron, Pinsky and Silberman (\cite{BPS16}, Theorem 3.5) for the volume of the canonical lift complement of $\widehat{\gamma_k}$ in terms of $n_{\gamma_k},$ as
$$C n_{\gamma_k}\ln(n_{\gamma_k}),$$ 
where $n_{\gamma_k}=k+1$ and $C$ a positive constant that depends on $\Sigma_{mod}.$
\end{note*}

\subsection{Lower bound of the volume of $M_{\widehat\gamma}$ in terms of the length of $\gamma$}\label{4.2}

Given $P$ a pair of pants in a pants decomposition of a hyperbolic surface, we construct all possible non homotopic geodesic arcs on $P$ whose length is bounded by some constant and then induce a filling geodesic which contains all this arcs. In this case, we have rewritten the lower bound for the volume of the canonical lift complement given by Theorem \ref{1} in terms of the length of the geodesic.

\begin{theorem}\label{pi-b}
Given a hyperbolic metric $X$ on the surface $\Sigma_{1,1},$ there is a sequence $\{\gamma_n\}$ of filling closed geodesics on $\Sigma$ with $\ell_{X}(\gamma_n)\nearrow \infty$ such that,
$$\Vol(M_{\widehat{\gamma_n}}) \geq k_X\frac{ \ell_{X}(\gamma_n)}{\ln(\ell_{X}(\gamma_n))},$$
where $k_X$ depends on the hyperbolic metric $X.$
 \end{theorem}

\begin{proof}[\bf{Proof}] 
For the sake of concreteness, we will start proving the result for a particular hyperbolic metric on $\Sigma_{1,1}.$  Fix the following representation $\fun{\rho}{\pi_1(\Sigma_{1,1}):=\langle a,t\rangle}{\PSL_2(\mathbb{R}) }$ such that
$$\small {\rho(a)=A=\left(\begin{array}{cc}
\sqrt{2}&1+\sqrt{2}\\
-1+\sqrt{2}&\sqrt{2}
\end{array}\right) 
\hspace{.5cm}\mbox{and} \hspace{.5cm} 
\rho(t)=T=\left(\begin{array}{cc}
-1+\sqrt{2}&0\\
0&1+\sqrt{2}
\end{array}\right).}
$$
Consider the splitting of $\Sigma_{1,1}$ along a simple close  geodesic associated to $A,$ then we have the HNN extension of $\pi_1(\Sigma_{1,1},p)=\langle a,b,t\mid tat^{-1}=b \rangle,$ with\string:
$$\rho(b)=TAT^{-1}=\left(\begin{array}{cc}
\sqrt{2}&-1+\sqrt{2}\\
1+\sqrt{2}&\sqrt{2}
\end{array}\right).
$$
Notice that  $ab^{-1}$ is freely homotopic to a simple loop that is retractable to the cusp of $\Sigma_{1,1}.$  
\vskip .2cm
For each $n\in\mathbb{N}$ we define the following $A$-cyclically reduced sequence:
$$(g_1,t,g_2,t,...,t,g_{12\cdot(3^{n}-1)},t)\hspace{.2cm}\mbox{where} \hspace{.2cm} g_i \in \langle a,b\rangle.$$
Moreover  $\{g_i\}_{i=1}^{12\cdot(3^{n}-1)}$ is the set of different reduced words in $\langle a,b\rangle$ starting with $b$ or $b^{-1},$ and ending with $a$ or $a^{-1},$ which have word length at must $n$ and at least $4.$ 
\vskip .2cm
By Corollary \ref{arcs}, the number of homotopy classes of arcs in each $n^{th}$-se\-quence is equal to $\sharp\{g_i\}_{i=1}^{12\cdot(3^{n}-1)}=12\cdot(3^{n}-1).$  Let $\gamma_n$ be the unique geodesic associated to the product of the $n^{th}$-sequence. Each one of the geodesics $\gamma_n$ belongs to different mapping class group orbits, because in this case the self-intersection number of  $\gamma_n$ is bounded from below by the number of terms of the sequence, which grows exponentially. Therefore, by Lemma \ref{fitor}, $\{\gamma_n\}$ is a sequence of filling closed geodesics on $\Sigma_{1,1}$ with $\ell_{X_0}(\gamma_n)\nearrow \infty.$ 
\vskip .2cm
By Theorem \ref{1}, we have that\string:
\begin{equation}\label{in2}
v_3\cdot 6\cdot(3^{n}-1)= \frac{v_3}{2} \sharp\left\{\mbox{homotopy clases of} \hspace{.1cm}  \gamma_n\mbox{-arcs}\right\}\leq \Vol(M_{\widehat{\gamma_n}}). 
 \end{equation}
The last part of the proof consist in rewriting the combinatorial lower bound in terms of the length of $\gamma_n.$ To do so, for each $\gamma_n,$ let $L_n$ be such that\string:
$$n=\frac {\ln(\ln(3)L_n )-\ln(\ln(\ln(3)L_n))-\ln(3)}{\ln(3)}.$$
Using the trace of the matrices we can calculate the length of the geodesics associated to $ab$ and $t.$ In this way we give the following upper bound of the length of $\gamma_n$ by using the word length with respect the generating set $\{a,b,t\}$\string:

$$\ell_{X_0}(\gamma_n)\leq 1.15\cdot|g_1tg_2t...tg_{12\cdot(3^{n}-1)}t| \leq 1.15\cdot12\cdot3^{n}(n+3)\leq 14\cdot L_n.$$
Then,
$$
6\cdot(3^{n}-1)\geq\frac{\ln(3)L_n} {\ln(\ln(3)L_n)}.
$$
Finally, by inequality (\ref{in2}) we have that:
$$v_3\frac{ \ell_{X_0}(\gamma_n)}{\ln(\ell_{X_0}(\gamma_n))}\leq \Vol(M_{\widehat{\gamma_n}}).$$
The proof of this result for any hyperbolic metric, follows from the fact that any pair of hyperbolic metrics on a hyperbolic surface are bi-Lipschitz  (see for example \cite{BPS16}, Lemma 4.1).
\end{proof}

To generalize  Theorem \ref{pi-b} to any hyperbolic surface $\Sigma,$ consider the following method for extending a given closed geodesic that fills a subsurface of $\Sigma$ to a filling closed geodesic on $\Sigma.$  Let $\beta$ be a separating oriented simple closed geodesic on $\Sigma$ and denote $\Sigma\setminus \beta$ by $\Sigma^1\coprod \Sigma^2.$ 
\vskip .2cm
Given two closed geodesics $\alpha_1$ and $\alpha_2$ on $\Sigma^1$ and $\Sigma^2$ respectively, choose a minimal length common perpendicular segment $\eta$ between $\alpha_1$ and $\alpha_2,$  with extremal points $p_1$ and $p_2.$ Let $\alpha_1\star\alpha_2$ be the closed geodesic in the homotopy class of the piecewise closed geodesic $\bar\alpha_{1,2}$ that travels from $p_1$ along $\alpha_1$ on the direction given by $\beta,$ then follows $\eta,$ continues from $p_2$ along $\alpha_2$ on the same direction given by $\beta$ and finishes by $\eta^{-1}.$\vskip .2cm
 \begin{figure}[h]
\centering
\includegraphics[scale=0.5] {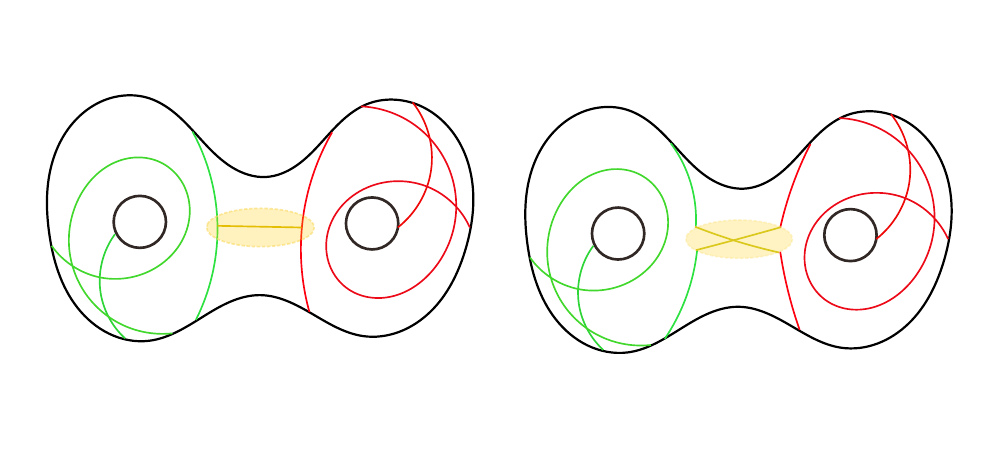}
\caption{ From $\bar\alpha_{1,2}$ to $\bar{\alpha_1\star\alpha_2}.$
}\label{geo1}
\end{figure} 
As $\bar\alpha_{1,2}$ is not self-transverse, we will slightly modify it to obtain one that is.
\vskip .2cm
 Take a $\delta$ neighborhood of ${\eta}$  such that it does not contain self-intersection of $\alpha_1$ and $\alpha_2.$ Erase the segment of ${\bar\alpha_{1,2}}$  that is contained the $\delta$ neighborhood of the ${\eta}$ geodesic segment, and link the consecutive extremal points in the same $\delta$ neighborhood by a pair of intersecting minimal length geodesic segments linking the remaining segments of ${\bar\alpha_{1,2}}.$ This piecewise geodesic, denoted by $\bar{\alpha_1\star\alpha_2},$ has no self-tangency points.\vskip .2cm

 \begin{figure}[h]
\centering
\includegraphics[scale=0.5] {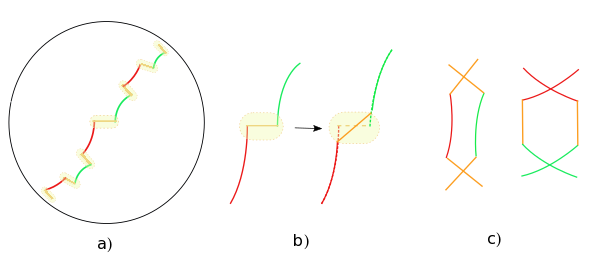}
\caption{a) A lift of $\bar\alpha_{1,2}$ in $\mathbb{H}^2.$ \hspace{.2cm} b) The modification from $\widetilde{\bar\alpha_{1,2}}$ to $\widetilde{\bar{\alpha_1\star\alpha_2}}$ by erasing the segments $\widetilde{\eta}$ and re-gluing.  \hspace{.2cm} c) Possible bigons given by the intersection of two lifts of  $\bar{\alpha_1\star\alpha_2}.$ 
}\label{pd2}
\end{figure}
\begin{claim}\label{mia}
$\bar{\alpha_1\star\alpha_2}$ has minimal self-intersection.
\end{claim}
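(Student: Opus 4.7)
The plan is to mirror the argument of Claim \ref{si} and use the Hass--Scott criterion. By Theorem \ref{2-gon} it suffices to show that $\bar{\alpha_1\star\alpha_2}$ bounds no singular $1$-gon or $2$-gon on $\Sigma$. Lifting to $\mathbb{H}^2$, each lift of $\bar{\alpha_1\star\alpha_2}$ is a bi-infinite piecewise geodesic whose smooth pieces are lifts of sub-arcs of $\alpha_1$, lifts of sub-arcs of $\alpha_2$, and the two short minimal length geodesic segments introduced inside the $\delta$-neighborhood of $\eta$. Note that $\alpha_1$ and $\alpha_2$ already have minimal self-intersection on their respective subsurfaces, and that lifts of arcs from $\Sigma_1$ are separated from lifts of arcs from $\Sigma_2$ by lifts of the simple geodesic $\beta$.

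First I would show that any two distinct lifts intersect at most once. Suppose to the contrary that two lifts $\tilde\sigma_1$ and $\tilde\sigma_2$ meet at two consecutive points $x,y$. Two lifts of a single smooth geodesic meet at most once, and no lifted $\alpha_1$-piece meets a lifted $\alpha_2$-piece, so the sub-arc of each $\tilde\sigma_i$ from $x$ to $y$ must contain at least one corner. We obtain a closed geodesic polygon in $\mathbb{H}^2$ whose vertices are $x$, $y$, and the corners along the two sides.

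If either side contributes two or more corners, the polygon has at least five vertices; since each corner angle and each crossing angle at $x,y$ lies in $(0,\pi)$, one argues as in Claim \ref{si} that the sum of interior angles is at least $2\pi$, contradicting Gauss--Bonnet in the hyperbolic plane. So only the case of a geodesic quadrilateral with exactly one corner on each side remains. Here I would exploit the construction of the modifying segments: they are the \emph{minimal length} geodesic segments linking the remaining pieces of $\bar\alpha_{1,2}$ across the $\delta$-neighborhood, attached to pieces of $\alpha_1$ and $\alpha_2$ whose angles with $\eta$ are close to $\pi/2$ since $\eta$ is the \emph{common perpendicular} between $\alpha_1$ and $\alpha_2$. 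Taking $\delta$ small, the two corner angles are arbitrarily close to $\pi/2$; combined with the two crossing angles at $x$ and $y$, the total interior angle sum of any such quadrilateral exceeds $2\pi$, again a contradiction. A $1$-gon would correspond to a single lift meeting a translate of itself along a deck transformation, and is excluded by the same polygon-angle argument applied to the two sub-arcs created by the identification.

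The main obstacle is the quadrilateral case: a direct Gauss--Bonnet comparison is available only because the two corners at the modifying segments can be made close to right angles by the common-perpendicular choice of $\eta$ and the minimality of the replacement segments. Without this geometric input, a generic piecewise-geodesic modification could introduce corners with small angles and genuinely produce bigons, so it is essential to verify the angle estimates rather than rely on the combinatorial picture alone.
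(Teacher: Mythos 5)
Your overall strategy coincides with the paper's: reduce via Theorem \ref{2-gon} to the absence of singular $1$- and $2$-gons, pass to the universal cover, show that any two lifts of $\bar{\alpha_1\star\alpha_2}$ meet at most once, note that between two consecutive intersection points each side of the putative bigon must contain a gluing corner, and contradict the Gauss--Bonnet bound on angle sums of piecewise-geodesic polygons in $\mathbb{H}^2$. The difference lies in how the two of you dispatch the resulting polygons, and it is precisely there that your argument has gaps. In the case of at least five vertices you claim that, since every corner and crossing angle lies in $(0,\pi)$, the interior angle sum is at least $2\pi$, "contradicting Gauss--Bonnet." Neither half of this works: knowing only that each angle is in $(0,\pi)$ gives no lower bound on the sum, and even a sum $\geq 2\pi$ contradicts nothing for a pentagon or hexagon, whose Gauss--Bonnet bounds are $3\pi$ and $4\pi$ respectively. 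To get a contradiction one must control the interior angles \emph{as measured from inside the bigon} at the gluing corners and show they are large; this is what the paper's observation that the gluing angles are obtuse is for, and it is how the paper reaches hexagons with angle sum $\geq 4\pi$.

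The quadrilateral case is the more serious problem. From "the two corner angles are arbitrarily close to $\pi/2$" you conclude that, together with the two crossing angles at $x$ and $y$, the total exceeds $2\pi$. It does not: two angles near $\pi/2$ contribute about $\pi$, and the crossing angles of a bigon can be arbitrarily small, so the four angles can easily sum to less than $2\pi$ (a convex hyperbolic quadrilateral with two near-right angles and two tiny angles exists). Your estimate closes only if the interior angles of the bigon at the two corners are the reflex ones (near $3\pi/2$), and you have not determined on which side of each corner the bigon lies. The paper takes a different route here: it argues that the obtuseness of the gluing angles forces each side of a bigon to contain at least two gluing points, so the quadrilateral configuration simply never arises and only the hexagon estimate is needed. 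To repair your version you must either supply that exclusion or justify why the bigon necessarily sits on the reflex side of both corners; as written, the step "the total interior angle sum of any such quadrilateral exceeds $2\pi$" is a non sequitur, and it is the heart of the claim.
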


\begin{proof}[Proof] By Theorem \ref{2-gone}, we just need to show that $\bar{\alpha_1\star\alpha_2}$ has no singular $1$-gons nor $2$-gons. Indeed, take the lifts of $\bar{\alpha_1\star\alpha_2}$ in the universal cover of $\Sigma,$  which are piecewise geodesic lines. It is enough to show that every pair of them intersects at most once. Certainly, if there were two consecutive intersections they cannot happen in segments of the same geodesic segment, so in-between there must be at least one gluing point of two distinct geodesic segments. Moreover, from the fact that the angle on the gluing points is obtuse, there most be at least two gluing points in each $2$-gon side. If that is the case, we would have geodesic side hexagons whose sum of interior angles is bigger or equal to $4\pi,$ which is a contradiction in the hyperbolic plane.
\end{proof}
By rounding the four corners of $\bar{\alpha_1\star\alpha_2},$ we get a self-transverse closed curve homotopically transversal to $\alpha_1\star\alpha_2,$ in particular this means that $M_{\widehat{\alpha_1\star\alpha_2}}\cong M_{\widehat{\bar{\alpha_1\star\alpha_2}}}.$

\begin{claim}\label{figl}
If each $\alpha_i$ is a filling closed geodesic on each $\Sigma^i$, then $\alpha_1\star\alpha_2$ is a filling closed geodesic on $\Sigma.$ 
\end{claim}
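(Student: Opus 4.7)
The plan is to establish filling of the smoothed piecewise geodesic $\bar{\alpha_1\star\alpha_2}$ and then transfer this conclusion to the geodesic $\alpha_1\star\alpha_2$. By Claim \ref{mia}, $\bar{\alpha_1\star\alpha_2}$ has minimal self-intersection, and by construction it is transversally homotopic to $\alpha_1\star\alpha_2$, so by Corollary \ref{comp} the filling property is preserved between them. It thus suffices to show that $\bar{\alpha_1\star\alpha_2}$ fills $\Sigma$, which by the dual characterization recalled in Section \ref{s1} amounts to showing that $i(c,\bar{\alpha_1\star\alpha_2})>0$ for every essential simple closed curve $c\subset\Sigma$. I would isotope $c$ so as to minimize $|c\cap\beta|$ and split into three cases. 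If $c$ is isotopic to $\beta$, then the van Kampen decomposition of $\pi_1(\Sigma)$ along $\beta$ recalled in subsection \ref{coding} expresses $\bar{\alpha_1\star\alpha_2}$ as a conjugacy class of cyclically reduced length two in the amalgamated or HNN product, so any free homotopy representative must cross $\beta$ at least twice. If $c$ is disjoint from $\beta$ but not isotopic to $\beta$, then $c$ sits as an essential simple closed curve in some $\Sigma_i$; the filling of $\alpha_i$ in $\Sigma_i$ gives $i_{\Sigma_i}(c,\alpha_i)>0$, and by $\pi_1$-injectivity of $\Sigma_i\hookrightarrow\Sigma$ this coincides with $i_\Sigma(c,\alpha_i)$.

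The remaining case is when $c$ meets $\beta$ transversally with $|c\cap\beta|$ minimized. The absence of bigons between $c$ and $\beta$ forces every sub-arc $c_j$ of $c\cap\Sigma_i$ to be essential relative to $\partial\Sigma_i=\beta$. Such an essential arc necessarily meets $\alpha_i$: were it disjoint, $c_j$ would lie in the unique complementary region of $\alpha_i$ in $\Sigma_i$ touching $\beta$, which is a boundary-parallel annulus because $\alpha_i\cap\beta=\emptyset$ and $\alpha_i$ fills $\Sigma_i$; but any arc in such an annulus with both endpoints on one boundary component is boundary-parallel, contradicting essentiality of $c_j$. Hence $c$ meets $\alpha_1\cup\alpha_2$ essentially, and a fortiori meets $\bar{\alpha_1\star\alpha_2}$.

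The main obstacle is ensuring that the essential intersections of $c$ with $\alpha_i$ in $\Sigma$ are not cancelled when one passes from $\alpha_1\cup\alpha_2$ to $\bar{\alpha_1\star\alpha_2}$, since the construction deletes small sub-arcs of $\alpha_i$ near the perpendicular foot $p_i$ and adds the crossing segments linking the four loose ends. Because these deletions and insertions all take place inside the $\delta$-neighborhood of $\eta$, this is circumvented by isotoping $c$ off this neighborhood before counting, a move that does not affect geometric intersection numbers. A secondary technical point is the identification of the complementary region of $\alpha_i$ adjacent to $\beta$ as a boundary-parallel annulus; this uses that $\alpha_i$ fills $\Sigma_i$ together with $\alpha_i\cap\beta=\emptyset$, so that $\beta$ sits in the closure of a single complementary region bounded on the other side entirely by arcs of $\alpha_i$.
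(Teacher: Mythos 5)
Your route is genuinely different from the paper's, and it has a gap at its last step. You reduce filling to showing $i(c,\alpha_1\star\alpha_2)>0$ for every essential simple closed curve $c$, and your case analysis correctly shows that suitable representatives of $c$ must \emph{meet} $\alpha_1\cup\alpha_2$ (the $\beta$-crossing count from the length-two cyclically reduced word, and the essential-arc argument inside each $\Sigma_i$ are fine). But the concluding ``a fortiori'' does not follow: $\alpha_1\star\alpha_2$ is a single curve in a new free homotopy class, and $i(c,\alpha_1)>0$ does not formally imply $i(c,\alpha_1\star\alpha_2)>0$. Isotoping $c$ off the $\delta$-neighborhood of $\eta$ only produces one representative of $c$ that intersects one representative $\bar{\alpha_1\star\alpha_2}$ of the class; to conclude anything about the geometric intersection number you would need these two curves to be in minimal position (no bigons between $c$ and $\bar{\alpha_1\star\alpha_2}$), which you never establish --- Claim \ref{mia} controls self-bigons of $\bar{\alpha_1\star\alpha_2}$, not bigons against a test curve $c$. (An algebraic repair, showing $\alpha_1\star\alpha_2$ is not freely homotopic into $\Sigma\setminus c$ via the splitting along $\beta$, also needs care: the arc of the geodesic in $\Sigma_1$ closes up to $\alpha_1$ only up to insertion of powers of $\beta$, so disjointness from $c$ does not immediately contradict $i(c,\alpha_1)>0$.)

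The paper avoids this issue entirely by using the other half of the definition of filling: it compares the complementary regions. $\Sigma\setminus(\alpha_1\cup\alpha_2)$ and $\Sigma\setminus\bar{\alpha_1\star\alpha_2}$ differ only in the regions touching $p_1$ and $p_2$; the surgery at $\eta$ cuts the annular region around $\beta$ (the union of the two boundary-parallel regions you correctly identified) into a disk and merely deforms the two adjacent regions, so all complementary regions of $\bar{\alpha_1\star\alpha_2}$ are disks or once-punctured disks. Since $\bar{\alpha_1\star\alpha_2}$ has minimal self-intersection (Claim \ref{mia}), this \emph{is} the definition of filling, and Corollary \ref{comp} transfers it to the geodesic $\alpha_1\star\alpha_2$. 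If you want to keep your dual characterization, you must either verify the no-bigon condition between $c$ and $\bar{\alpha_1\star\alpha_2}$ (e.g.\ by a universal-cover angle argument as in Claim \ref{mia}) or switch to the complementary-region count.
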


\begin{proof}[Proof] Notice that $\Sigma\setminus(\alpha_1\cup\alpha_2)$ differs from  $\Sigma\setminus(\bar{\alpha_1\star\alpha_2})$  only in the regions whose sides contain $p_1$ and $p_2.$ And $\bar{\alpha_1\star\alpha_2}$ splits the ring around $\beta$ in the original partition and deforms the other two adjacent regions, preserving their homeomorphism type. Moreover, as the property of being filling is preserved by transversal homotopy (Corollary \ref{comp}), then $\alpha_1\star\alpha_2$ is also filling.
\end{proof}

\begin{reptheorem}{pi-b-2}
Given a hyperbolic metric $X$ on a surface $\Sigma,$ there exist a sequence $\{\gamma_n\}$ of filling closed geodesics on $\Sigma$ with $\ell_X(\gamma_n)\nearrow \infty$ such that,
$$ \Vol(M_{\widehat{\gamma_n}})\geq c_X\frac{ \ell_X(\gamma_n)}{\ln(\ell_X(\gamma_n))},$$
where $c_X$ depends on the hyperbolic metric $X.$
 \end{reptheorem}

\begin{proof}[\bf{Proof}] 
If the surface $\Sigma$ contains a once-punctured torus, choose a separating closed geodesic $\beta$ such that $\Sigma\setminus \beta:=\Sigma^1\coprod \Sigma^2,$ where $\Sigma^1$ is homeomorphic to $\Sigma_{1,1}.$ Choose a pair of simple closed geodesics $\alpha$ and $\tau$ on $\Sigma^1$ such that $i(\alpha,\tau)=1.$ Consider the closed geodesics $\alpha_n$ on $\Sigma^1$ homotopic to the closed curves defined in Theorem \ref{pi-b}. Extend $\{\alpha,\beta\}$ to a pants decomposition $\Pi$ on $\Sigma.$ And fix $\eta_0$ a filling closed geodesic on $\Sigma^2.$ Then we construct $\gamma_n$ the filling closed geodesics (Claim \ref{figl}) associated to\string:
$$\alpha_n\star\eta_0.$$
By Theorem \ref{1} we have that\string:
\begin{equation}\label{in1}
v_3\cdot6\cdot(3^{n}-1)= \frac{v_3}{2}  \sum_{P\in\Pi}\sharp\{\mbox{homotopy classes of} \hspace{.2cm}  \gamma\mbox{-arcs in} \hspace{.2cm} P\}\leq \Vol(M_{\widehat{\gamma_n}}). 
 \end{equation}
The last part of the proof consist in rewriting the combinatorial lower bound in terms of the length of $\gamma_n.$ By using the word length with respect the generating set 
$\{\alpha,\tau\}$ we can find constants  $c_X$ and $\varepsilon_X$ that only depends on the metric such that\string:
$$c_X\frac{ \ell_X(\gamma_n)}{\ln(\ell_X(\gamma_n))} \leq k_X\frac{ \ell_X(\gamma_n)-\varepsilon_X}{\ln(\ell_X(\gamma_n))} \leq k_X\frac{ \ell_X(\alpha_n)}{\ln(\ell_X(\alpha_n))} \leq v_3\cdot 6\cdot(3^{n}-1).$$
Finally, by the inequality (\ref{in1}) we have that\string:
$$c_X\frac{ \ell_X(\gamma_n)}{\ln(\ell_X(\gamma_n))} \leq \Vol(M_{\widehat{\gamma_n}}).$$
In the case where $\Sigma$ is a $n$-punctured sphere, we consider $\pi:\Sigma'\rightarrow \Sigma$ a finite cover where $\Sigma'$ contains a once-punctured torus, and the induced covering $\widehat\pi$ from $PT^1\Sigma'$ to $PT^1\Sigma.$ Consider the previous geodesics $\{\gamma_n\}$  on $\Sigma'$ and $\{\pi(\gamma_n)\}$ in $\Sigma.$ Then the volume of the complement of $\widehat{\pi(\gamma_n)}$ on $PT^1\Sigma$ is a constant (given by the degree of the covering) times the volume of the complement of the canonical lifts of all the translates of $\gamma_n$ under the group of deck transformations on $PT^1\Sigma',$  which is grater than the volume corresponding to the complement of $\widehat\gamma_n$ on $PT^1\Sigma'.$ So the filling closed geodesics $\{\pi(\gamma_n)\}$  in $\Sigma$ is the sequence wanted.
\end{proof}

\section{Further comments}\label{s4}

As noticed by Foulon and Hasselblatt in  (\cite{FH13}, Theorem 1.12) any continuous lift complement of a filling closed geodesic $\gamma$ is a hyperbolic $3$-manifold of finite volume. This volume is bounded from below by the isotopy classes of $\widetilde\gamma$-arcs in the unit tangent bundle over a given pants decomposition of the surface (Corollary \ref{1.2}). This gives rise to new questions, for example,  how the volume complement varies from the canonical lift to other lifts in $PT^1\Sigma$ over the same closed geodesic.
\vskip .2cm
We start by showing that given a hyperbolic metric on a hyperbolic surface we can construct a sequence of closed geodesics and their respective non-canonical lifts, whose volume complement has a lower bound which is linear in the length of the geodesic.

\begin{proposition}\label{lin}
For any hyperbolic metric $X$ on $\Sigma,$ there exist a sequence of $\{\gamma_n\}$ filling closed geodesics and respective lifts $\{\widetilde{\gamma_n}\}$ in  $PT^1\Sigma$ with $\ell_X(\gamma_n)\nearrow \infty,$ such that,
$$ k_X\ell_X(\gamma_n) \leq  \Vol(M_{\widetilde{\gamma_n}}),$$
where $k_X$ is a positive constant that depends on the metric $X.$ Moreover, there exists a constant $V_0>0$ such that for the canonical lift $\Vol(M_{\widehat{\gamma_n}})< V_0$ for every $n\in\mathbb{N}.$
 \end{proposition}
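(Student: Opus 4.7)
The plan is to recycle the sequence $\{\gamma_n\}$ constructed in the proof of Theorem~\ref{2} so that the ``moreover'' clause holds for free, and then to exhibit non-canonical lifts $\{\tilde\gamma_n\}$ whose complements have simplicial volume linear in $n$ via Corollary~\ref{1.2}. Take $\gamma_n$ to be the filling geodesic representing $\eta^n\ast_p\gamma_0$, so by Theorem~\ref{2} one has $Vol(M_{\hat\gamma_n})<V_0$ uniformly. Since $\gamma_n$ admits a piecewise-geodesic representative of length at most $n\ell_X(\eta)+\ell_X(\gamma_0)$, one has $\ell_X(\gamma_n)\leq C_X n$ for a constant $C_X$ depending only on $X$. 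Extend $\{\eta\}$ to a pants decomposition $\Pi$ and pick $P\in\Pi$ with $\eta\subset\partial P$.

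\textbf{Construction of $\tilde\gamma_n$.} The canonical lift $\hat\gamma_n$ meets the torus $T_\eta\subset PT^1\Sigma$ in $n$ points, all sitting at essentially the same fiber height $[\dot\eta]$ on $T_\eta$. Build $\tilde\gamma_n$ by modifying the canonical direction field along $\gamma_n$: after each of the $n$ passes around $\eta$, add an extra rotation of $\pi/n$ in the $PT^1$-fiber, concentrated on a short arc placed away from the self-intersections of $\gamma_n$. The accumulated rotation after $n$ passes is $\pi$, so the modification closes up to a continuous lift of $\gamma_n$ in $PT^1\Sigma$. Since the perturbations are localized away from the self-intersections of $\gamma_n$, the two branches at each self-intersection retain distinct directions in $PT^1$ and $\tilde\gamma_n$ stays embedded. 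After the modification, the $n$ intersection points $\tilde\gamma_n\cap T_\eta$ are distributed at $n$ distinct fiber heights $\{i\pi/n\}_{i=1}^n$ on $T_\eta\cong \eta\times S^1$.

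\textbf{Counting isotopy classes in $T^1P$.} Apply Corollary~\ref{1.2}. Each of the $n$ near-$\eta$ arcs of $\tilde\gamma_n\cap T^1 P$ has an endpoint at a distinct fiber height on $T_\eta$; by the argument of Lemma~\ref{ch}, two such arcs correspond to the same cusp of the hyperbolic piece of $D((P)_{\tilde\gamma_n})$ only if they are ambient isotopic in $T^1P$ through the complement of the remaining $\tilde\gamma_n$-arcs, which would require sliding one endpoint along $T_\eta$ from one fiber height $i\pi/n$ to another $j\pi/n$ without crossing the other $n-2$ intersection points. This is obstructed by the topology of the punctured torus $T_\eta\setminus(\tilde\gamma_n\cap T_\eta)$: no such path extends to an ambient isotopy of arcs in $T^1P$ avoiding all other near-$\eta$ arcs. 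Thus the $n$ near-$\eta$ arcs contribute $n$ distinct isotopy classes, and Corollary~\ref{1.2} yields $\|M_{\tilde\gamma_n}\|\geq n/2$. Since $\gamma_n$ is filling, $M_{\tilde\gamma_n}$ is a complete hyperbolic $3$-manifold of finite volume by Foulon--Hasselblatt, so Gromov--Thurston's identity $Vol=v_3\|\cdot\|$ gives $Vol(M_{\tilde\gamma_n})\geq v_3 n/2\geq k_X\ell_X(\gamma_n)$ with $k_X=v_3/(2C_X)$.

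\textbf{Main obstacle.} The delicate step is the separation-of-isotopy-classes claim: one must certify rigorously that the $n$ distinct fiber heights of the endpoints on $T_\eta$ translate into $n$ distinct cusps of the hyperbolic piece of the JSJ decomposition of $D((P)_{\tilde\gamma_n})$, ruling out the possibility that several near-$\eta$ arcs could be absorbed into a common Seifert fibered component of the characteristic submanifold. This amounts to verifying that the fiber-winding data persists as an invariant under ambient isotopies in $T^1P$ through the complement of the remaining arcs, a topological analysis that is the main content of the proof.
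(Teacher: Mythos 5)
The gap is exactly the step you flag at the end, and it is not merely unproven: the construction as described does not produce $n$ distinct isotopy classes. The fiber height of an endpoint is not an invariant of the isotopy classes counted in Lemma \ref{ch} and Corollary \ref{1.2}, since endpoints may slide on the boundary torus $T_\eta$ minus a finite set of points, which is connected. More concretely, over the annular $\delta$-neighborhood of $\eta$ in $P$ the bundle restricts to a product $T^2\times[0,\delta]$; two of your near-$\eta$ arcs $\tilde\alpha_i,\tilde\alpha_j$ each wrap once around the $\eta$-direction with the same net fiber displacement $\pi/n$, sit at distinct radial levels with no other winding arcs between them, and differ only by a rigid fiber rotation composed with a radial shift. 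Pushing one radially to the other's level and then rotating it in the fiber direction is an ambient isotopy through the complement of the remaining arcs unless some strand of $\hat\gamma_0$ crosses the annulus at a fiber angle lying between $i\pi/n$ and $j\pi/n$. There are only $i(\gamma_0,\eta)$ such strands, a constant independent of $n$, so your arcs fall into boundedly many isotopy classes and Corollary \ref{1.2} returns a bounded, not linear, lower bound. (Your use of Theorem \ref{2} for the ``moreover'' clause, the linear length estimate, and the passage from an arc count to volume are all fine; the failure is isolated in the separation of classes.)

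The paper avoids this by giving each sub arc of the winding segment a genuinely different \emph{linking} pattern rather than a different fiber height: working in $\Sigma_{1,2}$, the long winding arc is cut by a transverse simple closed geodesic into $n$ sub arcs, each of which is assigned a distinct binary string of length $\lceil\ln n\rceil$ and made to encircle exactly the reference arcs $\hat\gamma_n\mid_{I_{a_k}}$ indexed by the $1$'s of its string. Distinct sub arcs are then distinguished by an honest invariant, namely their intersection numbers with embedded surfaces $S_k$ separating consecutive reference arcs, via the Lefschetz duality pairing (Claim \ref{ai}). Note the efficiency this buys: only $O(\ln n)$ reference arcs are needed to separate $n$ classes, which is what keeps $\ell_X(\gamma_n)$ comparable to $n$ and makes the final bound linear. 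To repair your argument you would need to replace the fiber-rotation idea by some such linking or homological mechanism.
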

 
By using techniques found in \cite{CKP16}, this result could maybe be improved by constructing a sequence of non-canonical lifts, whose volume complement has a lower bound which is quadratic in the length of the geodesic. Motivated by this, we conjecture that the volume of the canonical lift complement minimizes the volume of the complement over all the possible lifts of a fixed filling closed geodesic on the surface.

\begin{conjecture}
Given $\gamma$ a filling closed geodesic on a hyperbolic surface, then
$$\Vol(M_{\widehat{\gamma}})=\min_{\widetilde\gamma  \rightarrow \gamma  }\{\Vol(M_{\widetilde{\gamma}})\}.$$
\end{conjecture}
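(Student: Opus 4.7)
I would recycle the sequence $\{\gamma_n\}$ of filling closed geodesics from Theorem~\ref{2}. By that construction $Vol(M_{\hat\gamma_n})<V_0$ uniformly and $\ell_X(\gamma_n)\nearrow\infty$ for every hyperbolic metric $X$, which already establishes the \emph{Moreover} clause for free. The remaining task is to produce, for each $n$, a non-canonical continuous lift $\tilde\gamma_n$ of the \emph{same} geodesic $\gamma_n$ whose complement volume is at least linear in $\ell_X(\gamma_n)$, with Corollary~\ref{1.2} as the main combinatorial tool.

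First I would fix a pants decomposition $\Pi=\{P_i\}$ of $\Sigma$ containing as a boundary curve the simple closed geodesic $\eta$ around which $\gamma_n$ winds $n$ times in the construction of Theorem~\ref{2}, and let $P\in\Pi$ be a pair of pants adjacent to $\eta$. Since $\gamma_n$ crosses $\eta$ at least $n$ times, it produces at least $n$ successive arcs $\alpha_n^{(1)},\dots,\alpha_n^{(n)}$ in $P$, and a standard length--intersection comparison yields $n\geq k_X\,\ell_X(\gamma_n)$ for a metric-dependent constant $k_X>0$. Next I construct $\tilde\gamma_n$ by modifying $\hat\gamma_n$ locally: in a small embedded tube inside $PT^1\Sigma$ around the $k$th arc (chosen disjoint from self-intersection points of $\gamma_n$), insert $k-1$ extra spins around the $S^1$-fiber, for $k=1,\dots,n$. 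Because the modifications are supported in regions where the canonical lift is an unknotted simple arc, $\tilde\gamma_n$ is a well-defined embedded continuous lift of $\gamma_n$ with winding class $n(n-1)/2$ relative to $\hat\gamma_n$, and by design its $k$th arc in $T^1P$ carries fiber winding exactly $k-1$ relative to the canonical one.

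The $n$ resulting arcs of $\tilde\gamma_n$ in $T^1P$ then lie in $n$ pairwise distinct isotopy classes rel $\partial T^1P$, distinguished by the integer fiber-winding invariant (well-defined because the $S^1$-bundle $T^1P\to P$ trivializes over any contractible neighborhood of an arc in $P$). Corollary~\ref{1.2} then gives
\[
\|M_{\tilde\gamma_n}\|\;\geq\;\tfrac{1}{2}\,n.
\]
Since $\gamma_n$ is filling, Foulon--Hasselblatt ensures $M_{\tilde\gamma_n}$ is a finite-volume hyperbolic $3$-manifold, and the Gromov--Thurston identity $Vol(M_{\tilde\gamma_n})=v_3\,\|M_{\tilde\gamma_n}\|$ yields $Vol(M_{\tilde\gamma_n})\geq (v_3/2)\,n\geq k_X'\,\ell_X(\gamma_n)$, as required.

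The main obstacle I expect is the rigorous verification that the fiber-winding is a complete isotopy invariant separating the constructed arcs in $T^1P$ rel $\partial T^1P$. This reduces to a $\pi_1$ computation in the Seifert fibered structure of $T^1P$: one trivializes the bundle over each boundary-to-boundary arc in $P$, reads off the integer winding against this trivialization, and checks that two arcs with distinct integer windings cannot be connected by an isotopy of $T^1P$ fixing $\partial T^1P$ pointwise. A secondary technical point is preserving embeddedness of $\tilde\gamma_n$ under the $n$ local spin modifications; this is handled by performing the modifications in pairwise disjoint tubes, each disjoint from the self-intersection locus of $\gamma_n$, where small fiber-direction perturbations of an embedded arc remain embedded.
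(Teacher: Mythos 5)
There is a fundamental mismatch between what you prove and what the statement asserts. The statement is a \emph{conjecture} that, for a fixed filling closed geodesic $\gamma$, the canonical lift $\hat\gamma$ realizes the minimum of $Vol(M_{\tilde\gamma})$ over \emph{all} continuous lifts $\tilde\gamma$ of $\gamma$; this is a universal comparison $Vol(M_{\hat\gamma})\leq Vol(M_{\tilde\gamma})$ that must hold for every lift of every filling geodesic. What you construct is one particular sequence $\{\gamma_n\}$ together with one particular non-canonical lift $\tilde\gamma_n$ for each $n$ whose complement has volume growing linearly in $\ell_X(\gamma_n)$, while $Vol(M_{\hat\gamma_n})$ stays bounded. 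That exhibits \emph{some} lifts with volume larger than the canonical one; it says nothing about all the other lifts of $\gamma_n$, nor about geodesics outside your sequence, so it cannot establish the minimality claim. (The ``Moreover'' clause you refer to does not exist in the conjecture; you appear to have conflated the conjecture with Proposition~\ref{lin}, whose statement does contain it.)

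In fact, your construction essentially reproduces the paper's own Proposition~\ref{lin}: there the author also takes a geodesic winding many times across a fixed curve, inserts extra fiber windings into the canonical lift over the corresponding arcs (distinguishing the resulting arcs in $T^1P$ by a homological pairing argument rather than your proposed $\pi_1$/Seifert-fibered computation), and applies Corollary~\ref{1.2} to get a linear lower bound, while Theorem~\ref{2} bounds the canonical-lift volumes. The paper presents exactly this as \emph{motivation} for the conjecture, which it leaves open; no proof of the conjecture exists in the paper, and your proposal does not supply one. A genuine proof would need an argument applying to an arbitrary lift $\tilde\gamma$ of an arbitrary filling $\gamma$ --- for instance a volume comparison showing that changing the lift can only increase volume --- and no step of your proposal addresses that. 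Two smaller technical points, moot given the main gap: your inequality $n\geq k_X\,\ell_X(\gamma_n)$ needs the specific upper bound $\ell_X(\gamma_n)=O(n)$ coming from the construction of Theorem~\ref{2}, not a ``standard length--intersection comparison'' (which goes the other way); and the identity $Vol=v_3\|\cdot\|$ for the cusped manifold $M_{\tilde\gamma_n}$ is fine, but you still only obtain a lower bound for your chosen lifts, not an upper bound for all of them by $Vol(M_{\hat\gamma_n})$.
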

 
\begin{proof}[\bf{Proof of Proposition \ref{lin}}] 
We will start proving the statement for the case $\Sigma=\Sigma_{1,2}.$ We start by splitting $\Sigma_{1,2}$ along a simple closed separating geodesic into pieces homeomorphic to $\Sigma_{1,1}$ and $\Sigma_{0,3}.$ Choose a base point $p$ and we write $\pi_1(\Sigma_{1,2},p)$ as the amalgamated product along the previous separating simple geodesic, then $\pi_1(\Sigma_{1,1},p)\ast_\mathbb{Z} \pi_1(\Sigma_{0,3},p)=\langle a_1,t_1 \rangle\ast_\mathbb{Z}\langle a_2,b_2 \rangle.$ 
\vskip .2cm
The first step consists on constructing the family of closed filling geodesics. For each $n\in\mathbb{N}$ we define the following cyclically reduced sequence\string:
$$(t_1^{n} a_1t_1a_1t_1^{-1},b_2,a_1,b_2,...,a_1,b_2)\hspace{.2cm}\mbox{which has length} \hspace{.2cm}2\lceil\ln(n)\rceil, $$
let $\gamma_n$ be the unique geodesic associated to the product of this sequence.
\vskip .2cm
By Corollary \ref{arcs2}, consider the sequence of subintervals $I_t,$ $\{I_{a_i}\}_{i=1}^{\lceil\ln(n)\rceil-1}$ and $\{I_{b_i}\}_{i=1}^{\lceil\ln(n)\rceil}$  of $[0,\ell_{X} (\gamma_n)],$ such that $\gamma_n\mid_{I_t},$ $\gamma_n\mid_{I_{a_i}}$ and $\gamma_n\mid_{I_{b_i}}$ are respectively homotopic to $t_1^{n} a_1t_1a_1t_1^{-1},$  $a_1$ and $b_2$ relative to the boundary in $\Sigma_{1,1}.$ Since the closed geodesics associated to $a_1$ and $t_1$ intersect each other, this implies that each $\gamma_n\mid_{I_{a_i}}$ intersects  $\gamma_n\mid_{I_t}$ $n\pm 1$ times. Notice that $\gamma_n$ is filling because by Lemma \ref{fitor} $\gamma_n\mid_{I_t}$  is filling in $\Sigma_{1,1}$ and the piece corresponding to $\Sigma_{0,3}$ is split by $\gamma_n\mid_{I_{b_i}}.$ We now construct the non-canonical lifts $\widetilde{\gamma_n}$ by modifying the canonical lift $\widehat{\gamma_n}$ in the pre-image of $\gamma_n\mid_{I_t}$ under the map ${PT^1\Sigma}\rightarrow{\Sigma}.$ 
\vskip .2cm
We start by splitting the sub arc $\gamma_n\mid_{I_t}$ by the closed simple geodesic associated to $a_1$ in $\{ \gamma_n\mid_{I_{t_j}} \}_{j=0}^{n}$  sub arcs. As there is an injection between  the number of sub arcs $\{ \gamma_n\mid_{I_{t_j}} \}_{j=0}^{n}$ and the set $\{0,1\}^{\lceil \ln (n)\rceil },$ then to each sub arc we can associate a unique sequence. 
\vskip .2cm
Let us take the pre-image of $\gamma_n\mid_{I_{t_j}}$ in $PT^1\Sigma_{1,2}$ under the projection map ${PT^1\Sigma}\rightarrow{\Sigma}.$  This is an annulus that contains $\widehat{\gamma_n}\mid_{I_{t_j}}$ and $\lceil \ln (n)\rceil$ punctures corresponding to the intersection of $\gamma_n\mid_{I_{a_i}}$ with $\gamma_n\mid_{I_{t_j}}.$ If the sequence associated to $\gamma_n\mid_{I_{t_j}}$ has $0$ in the $k$-coordinate we  do not modify the lift $\widehat{\gamma_n}\mid_{I_{t_j}}$ in the fiber corresponding to the intersection with the ${\gamma_n}\mid_{I_{a_k}}.$ If it has $1$ in the $k$-coordinate then we modify $\widehat{\gamma_n}\mid_{I_{t_j}}$  in such way that the new lift $\widetilde{\gamma_n}\mid_{I_{t_j}}$  goes around the puncture generated by $\widehat{\gamma_n}\mid_{I_{a_k}}.$

\begin{figure}[h]
\centering
\includegraphics[scale=0.5] {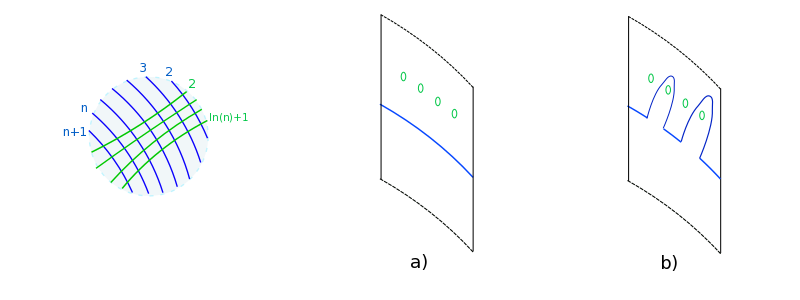}
\caption{a) The lift $\widehat{\gamma_n}\mid_{I_{t_j}},$  in the pre-image of $\gamma_n\mid_{I_{t_j}},$ and the $\gamma_n\mid_{I_{a_i}}$'s punctures.\hspace{.2cm} b) The new lift $\widetilde{\gamma_n}\mid_{I_{t_j}},$ in the pre-image of $\gamma_n\mid_{I_{t_j}},$  associated to the sequence $(0,1,0,1).$
}\label{li}
\end{figure}

\begin{claim}\label{ai}
Any pair of distinct sub arcs $\widetilde{\gamma_n}\mid_{I_{t_j}}$ and $\widetilde{\gamma_n}\mid_{I_{t_i}}$  are not ambient isotopic in $$(X,\partial X):=( T^1P\setminus  \{\widehat{\gamma_n}\mid_{I_{a_k}}\}_{k=1}^{\lceil \ln(n)\rceil-1}, \partial (T^1P\setminus  \{\widehat{\gamma_n}\mid_{I_{a_k}}\}_{k=1}^{\lceil \ln(n)\rceil-1})),$$
where $P$ is the pair of pants after splitting the piece $\Sigma_{1,1}$ by the simple closed geodesic associated to $a_1.$
\end{claim}

\begin{proof}[\bf{Proof}] 
Take $\gamma_n\mid_{I_{t_i}}$ and $\gamma_n\mid_{I_{t_j}}$ with $i\neq  j,$ and their respective sequence in $\{0,1\}^{\lceil \ln (n)\rceil },$ then there exists a first coordinate $k$ where the two sequence do not coincide. Consider an embedded surface $S_k$ in $T^1P\setminus  \{\widehat{\gamma_n}\mid_{I_{a_k}}\}_{k=1}^{\lceil \ln(n)\rceil}$ bounding the arcs $\widehat{\gamma_n}\mid_{I_{a_k}}$ and $\widehat{\gamma_n}\mid_{I_{a_{k+1}}}.$
\vskip .2cm
By the cup product in $H^*(X;\mathbb{Z})$ and Lefschetz duality we have the following paring:
$$H_2(X,\partial X;\mathbb{Z}) \times H_1(X,\partial X;\mathbb{Z})\rightarrow \mathbb{Z}$$
such that$ \langle[S_k],[\widetilde{\gamma_n}\mid_{I_{t_j}}]\rangle\neq\langle[S_k],[\widetilde{\gamma_n}\mid_{I_{t_i}}]\rangle.$

Then $[\widetilde{\gamma_n}\mid_{I_{t_i}}]\neq[\widetilde{\gamma_n}\mid_{I_{t_j}}],$ meaning that $\widetilde{\gamma_n}\mid_{I_{t_i}}$ is not ambient isotopic to  $\widetilde{\gamma_n}\mid_{I_{t_j}}$ in $(X,\partial X).$
\end{proof}

The previous fact implies that the isotopy classes of $\widehat\gamma$-arcs in $T^1P$ corresponding to $\{\widetilde{\gamma_n}\mid_{I_{t_j}}\}^n_{j=1}$ are different. So by Corollary  \ref{1.2}, we conclude that\string:
$$\frac{v_3}{2} n < \Vol(M_{\widetilde{\gamma_n}}).$$
Moreover, by Theorem \ref{2}, if we take the canonical lifts of the sequence $\{\gamma_n\},$ then there exists a constant $V_0>0$ such that $\Vol(M_{\widehat{\gamma_n}})< V_0$ for every $n\in\mathbb{N}.$
\vskip .2cm
To finish the case $\Sigma=\Sigma_{1,2},$ it is enough to estimate the length of $\gamma_n$  by comparing it to its word length,
$$ \ell_{X} (\gamma_n)\leq (4+ n+ 2\ln (n))\ell_{max}\leq 5n\ell_{max},$$
where $\ell_{max}$ is the maximal length of three simple closed curves representing $a_1,t_1$ and $b_2$ with the same base point.
\vskip .2cm
We have proved the Proposition \ref{lin} for the case $\Sigma=\Sigma_{1,2}.$ Suppose now that $\Sigma$ contains a subsurface of topological type as $\Sigma_{1,2}.$ Proceeding as in Theorem \ref{pi-b-2} we get the wanted sequence of filling closed geodesics on $\Sigma,$ and their respective canonical lifts are constructed in the same way as in the previous case. Furthermore, if $\Sigma$ does not contain a subsurface of topological type as $\Sigma_{1,2},$ we consider $\pi:\Sigma'\rightarrow \Sigma$ a finite cover where $\Sigma'$ contains a subsurface $\Sigma_{1,2},$ and the induced covering $\widehat\pi$ from $PT^1\Sigma'$ to $PT^1\Sigma.$ Consider the previous geodesics $\{\gamma_n\}$  on $\Sigma'$ and $\{\pi(\gamma_n)\}$ in $\Sigma.$ Then the volume of the complement of $\widehat\pi(\widetilde\gamma_n)$ on $PT^1\Sigma$ is a constant (given by the degree of the covering) times the volume of the complement of the non-canonical lifts of all the translates of $\gamma_n$ under the group of deck transformations on $PT^1\Sigma',$  which is grater than the volume corresponding to the complement of $\widetilde\gamma_n$ on $PT^1\Sigma'.$ So the filling closed geodesics $\{\pi(\gamma_n)\}$ in $\Sigma$ and non-canonical lifts $\widehat\pi(\widetilde\gamma_n)$ on $PT^1\Sigma$ are the sequences wanted.
\end{proof}

One of the question that arises from the fact that the topology of $M_{\widetilde{\gamma}}$ does not depend on the metric given on $\Sigma$ is to find a metric for each element in the sequence $\{\gamma_n\}$ of Proposition \ref{lin} (see also Theorem \ref{pi-b-2}) which minimizes its length (\cite{Ker83}, Lemma 3.1). This implies that the volume lower bounds (Corollary \ref{1.2}) in terms of the length with respect this particular metric will be larger.  For example, in the following result, we choose for each $\gamma_n$ on Proposition \ref{lin} a different hyperbolic metric on $\Sigma_{1,2}$ which can be interpreted as an approximation of the minimal metric associated to each $\gamma_n,$ and rewrite the lower bound in terms of this new metrics.
 
\begin{proposition}\label{pi-b2}
 There exist  $X_n$ hyperbolic metrics on $\Sigma_{1,2}$ corresponding to the closed geodesics $\gamma_n$ of Proposition \ref{lin} with $\ell_{X_n}(\gamma_n)\nearrow \infty,$ such that\string:
$$\frac{v_3}{2\sqrt{2}}e^{\frac{\sqrt{\ell_{X_n}(\gamma_n)}}{2} } \leq  \Vol(M_{\widetilde{\gamma_n}}),$$
where $v_3$ is the volume of a regular ideal tetrahedron and $\widetilde{\gamma_n}$ are the sequence of lifts in Proposition \ref{lin}.
 \end{proposition}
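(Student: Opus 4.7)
The starting point is the lower bound $Vol(M_{\tilde{\gamma_n}})\geq \tfrac{v_3}{2}n$ obtained inside the proof of Proposition \ref{lin}. Algebraically the claimed inequality rearranges to $e^{\sqrt{\ell_{X_n}(\gamma_n)}/2}\leq \sqrt{2}\,n$, so the task will reduce to exhibiting hyperbolic metrics $X_n$ satisfying
$$\ell_{X_n}(\gamma_n)\;\leq\;4\log^{2}(\sqrt{2}\,n),$$
while still having $\ell_{X_n}(\gamma_n)\nearrow\infty$. Thus the whole problem becomes a length-shortening problem for the specific conjugacy classes $\gamma_n$ built in Proposition \ref{lin}.

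The plan is to pinch the simple closed geodesic associated to the HNN-stable element $t_1$. I will extend $\{a_1,\beta,t_1\}$ to a pants decomposition of $\Sigma_{1,2}$ and work in Fenchel-Nielsen coordinates, defining $X_n$ by $\ell_{X_n}(t_1)=1/n$, the other pants-curve lengths equal to some fixed $L_0$, and zero twist parameters. With this choice $\ell_{X_n}(b_2)$ stays bounded by an absolute constant $C_0$. The collar lemma then produces an embedded collar around $t_1$ of width at most $\arcsinh(1/\sinh(1/(2n)))\leq \log(2n)+O(1)$; choosing based loops at $p\in t_1$ representing the generators, a loop representing $a_1$ can be realised by entering the collar, winding once transversally, and exiting, of length at most $2\log(2n)+O(1)$, while the loop representing $t_1$ is $t_1$ itself, of length $1/n$.

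Using that the length of a geodesic is bounded above by the length of any loop in its conjugacy class, together with the $a_1$-cyclically reduced word $t_1^{n}a_1t_1a_1t_1^{-1}\cdot(b_2\,a_1)^{\lceil\ln n\rceil-1}\cdot b_2$ representing $\gamma_n$, I will estimate
$$
\ell_{X_n}(\gamma_n)\leq (n{+}2)\tfrac{1}{n}+(\lceil\ln n\rceil{+}1)(2\log(2n)+O(1))+\lceil\ln n\rceil\,C_0\leq 2(\ln n)^{2}(1+o(1)),
$$
which for large $n$ is $\leq 4\log^{2}(\sqrt{2}\,n)$; combined with the Proposition \ref{lin} estimate $Vol(M_{\tilde{\gamma_n}})\geq \tfrac{v_3}{2}n$, this yields the desired inequality. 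Divergence $\ell_{X_n}(\gamma_n)\nearrow\infty$ is automatic: $\gamma_n$ transversely crosses $t_1$ at least $\lceil\ln n\rceil$ times, forcing $\ell_{X_n}(\gamma_n)\geq \lceil\ln n\rceil(2\log(2n)-O(1))$ by the collar lemma, and one may pass to a subsequence to obtain strict monotonicity.

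The main obstacle will be producing the claimed short loop representative of $a_1$ in the pinched metric: one has to choose the twist parameter of $t_1$ carefully so that the geodesic $a_1$ crosses the collar essentially perpendicularly rather than spiralling many times along it. The underlying phenomenon is, however, intuitive: pinching $t_1$ converts the $n$ powers of $t_1$ in $\gamma_n$ from a linearly growing length cost into an $O(1)$ contribution, so the residual length is dominated by the $\lceil\ln n\rceil$ traversals of the (now thin) collar, each of length $O(\log n)$, and the resulting total $O((\ln n)^{2})$ precisely matches the $e^{\sqrt{\ell}/2}$ scale asserted in the statement.
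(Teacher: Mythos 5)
Your strategy is in substance the same as the paper's: make $t_1$ very short so that the block $t_1^{n}$ costs $O(1)$, at the price of each $a_1$ costing $O(\log n)$, so that the total length is $O((\log n)^{2})$ while the volume bound from Proposition \ref{lin} stays $\geq\frac{v_3}{2}n$. (The paper implements this by fixing the boundary lengths of the pair of pants coming from the $\Sigma_{1,1}$ piece to be $4\ln(\sqrt{2}n)$, which forces the seam arcs, and hence $t_1$, to have length about $1/n$; you pinch $t_1$ directly. Up to a change of Fenchel--Nielsen coordinates these are essentially the same metrics.) However, two steps as written do not work. First, $\{a_1,\beta,t_1\}$ is not a pants decomposition: a pants decomposition of $\Sigma_{1,2}$ has only two curves, and, more importantly, $a_1$ and $t_1$ intersect --- the proof of Proposition \ref{lin} explicitly uses that the geodesics for $a_1$ and $t_1$ cross --- so they cannot both be pants curves. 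You should take $\{\beta,t_1\}$ as the pants decomposition and treat $a_1$ as the dual curve whose length you control via the collar lemma, which is what your next sentence in effect does.

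Second, your length accounting via loops based at $p\in t_1$ undercounts the $b_2$ terms. After pinching $t_1$ to length $1/n$, its collar has half-width about $\ln(4n)$, so $p$ lies at distance at least $\ln(4n)-O(1)$ from $\beta$ and hence from the $\Sigma_{0,3}$ piece; a based loop at $p$ representing $b_2$ therefore has length at least $2\ln(4n)-O(1)$, not $C_0$. Corrected, your upper bound becomes $4(\ln n)^{2}(1+o(1))$ rather than $2(\ln n)^{2}(1+o(1))$, and the target inequality $\ell_{X_n}(\gamma_n)\leq 4\ln^{2}(\sqrt{2}n)$ is then at best borderline, since it only tolerates an error of order $\ln n$ beyond $4(\ln n)^{2}$. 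The repair is to base the loops at a point of $\beta$ (equivalently, to estimate arc by arc after cutting along $\beta$ as in Corollary \ref{arcs2}): then $t_1^{n}$ costs $2\ln(4n)+O(1)$ as a single block, each $a_1$ costs $2\ln(4n)+O(1)$, and each $b_2$ costs $O(1)$, giving $\ell_{X_n}(\gamma_n)\leq 2(\ln n)^{2}(1+o(1))$ and hence the stated inequality for $n$ large. With these two corrections your argument goes through and coincides with the paper's.
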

 
\begin{proof}[\bf{Proof}] 
For each $n\in\mathbb{N}$ we construct a hyperbolic metric $X_n$ on $\Sigma_{1,2}$ by using the pants decomposition $\Pi$ of Proposition \ref{lin}. Let us fix the length of the boundary components of the pair of pants corresponding to the piece $\Sigma_{1,1}$ by $4\ln (\sqrt{2}n),$ this implies that the length of the seam arcs joining the boundary components is approximately $\frac{1}{n}.$ Also fix the length of the boundary components of $\Sigma_{1,2}$ by a constant. 
 \vskip .2cm
The conclusion follows from the following inequality, which results from the upper estimation of the geodesic length with respect the hexagonal decomposition induced by $\Pi$ and the seam edges\string:
$$ \ell_{X_n} (\gamma_n)\leq 4{\ln(\sqrt{2}n)}^2.$$ 
\end{proof}

\vskip .01cm
{ \SMALL I.R.M.A.R., UNIVERSIT\'E DE RENNES I, CAMPUS DE BEAULIEU, 35042 RENNES CEDEX, FRANCE}\\
{ \SMALL \textit{E-mail address:} \textbf{jose.rodriguezmigueles@etudiant.univ-rennes1.fr}}
\end{document}